\theoremstyle{plain}
\newtheorem{thm}{Theorem}[section]
\newtheorem{cor}[thm]{Corollary}
\newtheorem{conj}{Conjecture}[section]
\newtheorem{prop}[thm]{Proposition}
\newtheorem{lem}[thm]{Lemma}
\newtheorem*{mthm}{Theorem modulo above conjecture}
\theoremstyle{definition}
\newtheorem{remark}[thm]{Remark}
\newtheorem{example}[thm]{Example}
\newtheorem{ex}[thm]{Exercise}
\newtheorem{question}[thm]{Question}
\newcommand{\comment}[1]{}
\newcommand{\Q}{\ensuremath{\mathbb{Q}}}
\newcommand{\R}{\ensuremath{\mathbb{R}}}
\newcommand{\Z}{\ensuremath{\mathbb{Z}}}
\newcommand{\C}{\ensuremath{\mathbb{C}}}
\newcommand{\Chi}{\ensuremath{\mathcal{X}}}
\renewcommand{\sl}{\ensuremath{{\, sl}}}
\def\dfn#1{{\em #1}}
\title{Monoids in the mapping class group}
\author{John B. Etnyre}
\address{School of Mathematics \\ Georgia Institute of Technology}
\email{etnyre@math.gatech.edu}
\urladdr{\href{http://www.math.gatech.edu/~etnyre}{http://www.math.gatech.edu/\~{}etnyre}}
\author{Jeremy Van Horn-Morris}
\address{Department of Mathematics\\ The University of Arkansas} 
\email{jvhm@uark.edu}
\begin{document}

\begin{abstract}
In this article we survey, and make a few new observations about, the surprising connection between sub-monoids of the mapping class groups and interesting geometry and topology in low-dimensions. 
\end{abstract}

\maketitle

\section{Introduction}
Recall that a \dfn{monoid} is a set $M$ with an associative binary operation and a unit (or colloquially ``a group without inverses''). A typical example is the non-negative integers under addition. Below we describe various ways to construct monoids in groups. Specifically we discuss how to use generating sets and left-invariant orderings to create monoids in groups. We then apply these techniques to the braid groups and mapping class groups to generate several different sub-monoids. The surprising  thing is these algebraically defined monoids tend to have deep connections to topology and geometry! We then show how to use contact geometric ideas to construct more monoids in the mapping class groups that are algebraically not well understood at all. A better such understanding would most likely have important implications in contact geometry. Moreover previously intractable questions about various monoids in the braid group defined via generating sets (and long known to be related to algebraic geometry) have been answered using contact geometric techniques. 

It is this amazing connection between the algebra, on one hand, and geometry and topology, on the other, of monoids in the mapping class group that is the focus of this paper. We wish to highlight the connections that are known and point to many interesting open problems in the area. 

As we hope this survey paper to be highly accessible, in Section~\ref{bg} we recall (1) basic facts about the braid group and their relation to links in $S^3$, (2) a few definitions and facts about contact geometry in dimension 3, (3) the relation between braids and contact topology, (4) some results about open book decomposition and their relation to contact structures and generalized braids, and finally (5) some notation concerning the mapping class group of a surface. In the following section we discuss how to generate monoids via generating sets for a group and then discuss some classically known monoids in the braid group. Specifically we discuss positive, quasi-positive and strongly quasi-positive braids and their topological and geometric importance. 

In Section~\ref{ordermonoid} we show how to create monoids from left-invariant ordering and then discuss the famous ordering of the braid groups and more generally mappings class groups of surfaces with boundary. This will naturally lead to the notion of right-veering diffeomorphisms of surfaces and the right-veering monoid which has important connections to contact geometry. In Section~\ref{mvcg} we then indicate the surprising fact that once can construct many monids in the mapping class groups using ideas from contact geometry. The following section then discusses many interesting observations about these monoids and even more open questions about them. 

In the final two sections of the paper we show how to use contact geometry to construct monoids in the braid groups (and discuss many questions related to this) and how to use contact geometry to study the various classical ``positive" monoids in the braid group.

\subsection*{ Acknowledgments} We thanks Dan Margalit for helping clarify various points about mapping class groups, Mohan Bhupal and Burak Ozbagci for discussions about Milnor fillable contact structures used in Example~\ref{example}, and Liam Watson for helping develop some of the ideas in Section~\ref{bgmonoids}. We also thank Jamie Conway, Amey Koloti and Alan Diaz for help with Example~\ref{fdtcexex} below and other discussions that helped clarify various points in the paper. 
The first author gratefully acknowledges the support of the NSF grant DMS-1309073. The second author was partially supported by a grant from the Simons Foundation (279342).

\section{Background}\label{bg}

\subsection{The braid group}\label{braids}

Here we briefly discuss the braid group and will return to it from the perspective of mapping class groups later. For a more thorough introduction to the subject we recommend both Birman's classic book \cite{BIrman74} and the survey paper \cite{BirmanBrendle05}. 

We begin by fixing $n$ points $x_1,\ldots x_n$ on the $y$ axis inside the unit disk $D^2$ so that their $y$--coordinate increases with their index. Then recall that an \dfn{$n$--strand braid}, or \dfn{$n$--braid} for short, is an isotopy class of embeddings of $n$ intervals $[0,1]$ into $D^2\times [0,1]$ that is transverse to each disk $D^2\times \{t\}$ and intersects $D^2\times\{0\}$ and $D^2\times \{1\}$ in the points $\{x_1,\ldots, x_n\}$. It is clear that given two $n$-braids $w_1$ and $w_2$ we can reparameterize the first to lie in $D^2\times[0,1/2]$ and the second to lie in $D^2\times [1/2,1]$ and then concatenate them to get a new braid $w_1w_2$. Thus the set of $n$--braids, which we denote by $B(n)$, has a multiplicative structure. One may easily see that this gives $B(n)$ the structure of a group. This is called the braid group.

\begin{figure}[htb]
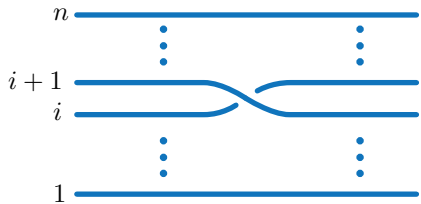

\begin{overpic}
{fig/StdGenBraidGp}
\put(-8, -2){$1$}
\put(-8,29){$i$}
\put(-25, 41){$i+1$}
\put(-8, 67){$n$}
\end{overpic}
\caption{The standard generator $\sigma_i$ of the braid group $B(n)$.}
\label{fig:sigman}
\end{figure}
There is a simple finite presentation for the braid group called the Artin presentation, \cite{Artin25}. The generators are the braids $\sigma_i$ depicted in Figure~\ref{fig:sigman} for $i=1,\ldots, n-1$ and the relations are 
\[
\sigma_i\sigma_j=\sigma_j\sigma_i \quad \text{if } |i-j|>1
\]
and
\[
\sigma_i\sigma_{i+1}\sigma_i=\sigma_{i+1}\sigma_i\sigma_{i+1}.
\]

Notice that given a braid $w$ we can consider the image of $B$ in $D^2\times [0,1]$ with $D^2\times\{0\}$ glued to $D^2\times \{1\}$ by the identity. This gives a link in $D^2\times S^1$ and if we identify $D^2\times S^1$ with the neighborhood of the unknot in $\R^3$ (taking the product framing to the zero framing on the unknot) then the image of $w$ will be a link $\overline{w}$ in $\R^3$ called the closure of $w$. Notice that if one uses cylindrical coordinates on $\R^3$ then the identification of $D^2\times S^1$ with a neighborhood of the unknot can be done in such a way that the $S^1$ factor is the $\theta$ coordinate. Thus we see that the $\theta$--coordinate is monotonically increasing as we traverse any component of $\overline{w}$. Similarly if $L$ is any link in $\R^3$ that is disjoint from the $z$--axis and has $\theta$--coordinate monotonically increasing then we say that $L$ is \dfn{braided} about the $z$--axis. Notice that in this case we can isotope $L$, though links braided about the $z$--axis, so that it lies in a neighborhood $D^2\times S^1$ of the unknot in the $z=0$ plane. Choosing any $\theta_0$ $L$ will intersect $D^2\times \{\theta_0\}$ in some number of points, say $n$. Now we can further isotope $L$ in $D^2\times S^1$ so that it intersects $D^2\times \{\theta_0\}$ in the points $x_0,\ldots x_n$.  Thus cutting $D^2\times S^1$ along $D^2\times \{\theta_0\}$  will result in $D^2\times [0,1]$ and $L$ will become a braid $w$. It is clear that $L$ is isotopic to the closure $\overline{w}$ of the braid $w$. It is quite useful to know that all links can be so represented. 
\begin{thm}[Alexander 1923, \cite{Alexander23}]
Given any link $L$ in $\R^3$ there is some natural number $n$ and $n$--braid $w$ such that $L$ is isotopic to the closure or $w$. 
\end{thm}
It should be clear that the braid in Alexander's theorem is not unique. For example given a $n$--braid $w$ we can form an $(n+1)$--braid $w^\pm$ called the \dfn{positive/negative stabilization} of $w$ by adding a trivial $(n+1)^\text{st}$ strand and then multiplying $w$ by the generator $\sigma_n^{\pm}$. One easily checks that the closures of $w$ and $w^\pm$ are isotopic links. Similarly given $w$ one can conjugate $w$ by another $n$--braid $b$ to get a braid with the same closure: $\overline{w}$ is isotopic to $\overline{bwb^{-1}}$. It turns out these two procedures are the only way to get braids with the same closure. 
\begin{thm}[Markov 1935, \cite{Markov35}]
Two braids $w$ and $w'$ have isotopic closures if and only if $w$ and $w'$ are related by a sequence of stabilizations, destabilizations and conjugations. 
\end{thm}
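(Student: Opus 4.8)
The plan is to treat the two directions very asymmetrically: the ``if'' direction is essentially already established in the text preceding the statement, while all of the real content lies in the converse. For the forward implication, the two paragraphs above record that the closure is unchanged up to isotopy under positive/negative stabilization (the closures of $w$ and $w^\pm$ are isotopic) and under conjugation (the closure of $bwb^{-1}$ is isotopic to that of $w$). Since isotopy of links in $\R^3$ is an equivalence relation, any finite word in stabilizations, destabilizations, and conjugations carries $\overline{w}$ to an isotopic link, and the forward direction follows with no further work.

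For the converse I would work in the complement of the braid axis. Recall from the discussion of Alexander's theorem that a link is braided exactly when it is disjoint from the $z$-axis $A$ and its $\theta$-coordinate is strictly monotone along each component, i.e.\ the link is everywhere transverse to the half-planes $H_\theta=\{\theta=\mathrm{const}\}$ fibering $\R^3\setminus A$. Given an ambient isotopy $h_s$, $s\in[0,1]$, with $h_0(\overline{w})=\overline{w}$ and $h_1(\overline{w})=\overline{w'}$, the first step is to perturb $h_s$ to be generic with respect to this fibration. The crucial point, and the reason the statement is nontrivial, is that the intermediate links $h_s(\overline{w})$ need \emph{not} be braided, so one cannot simply read off a braid word for every $s$. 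Genericity should, however, confine the failure of the braided condition to finitely many parameters $s$, and force each failure to be of the simplest codimension-one type: either (i) the link acquires a single nondegenerate tangency with one half-plane $H_\theta$, at which a cancelling pair of $\theta$-critical points is born or dies, or (ii) the track of the isotopy meets the axis $A$ transversally at an isolated point. Between consecutive singular parameters the link stays braided and hence determines a braid, well defined up to conjugation, the conjugation ambiguity being exactly the freedom of rotating the cutting half-plane $H_{\theta_0}$ around the fibration.

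The final step is to identify the change of braid across each elementary degeneration. A type-(i) birth or death of a pair of $\theta$-critical points introduces or removes a small trivially braided strand carrying a single crossing, which is precisely a positive or negative (de)stabilization, and the accompanying relocation of critical points is absorbed into conjugation. A type-(ii) axis crossing must be eliminated rather than translated directly, since no Markov move changes the winding about $A$: here one pushes the link off the axis by a local modification and checks that the resulting detour is realized, after braiding, by a stabilization together with conjugations. \textbf{The main obstacle is exactly this Cerf-theory--flavored general position analysis}: proving that a generic $1$-parameter family has only these two degeneration types, that no higher-codimension phenomena must be accounted for, and that each crossing of the axis can be removed at the cost of only the permitted moves. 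Once that local analysis is in place, concatenating the moves read off from the finitely many singular parameters exhibits $w$ and $w'$ as related by a sequence of stabilizations, destabilizations, and conjugations, completing the converse.
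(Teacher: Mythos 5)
The paper does not prove this statement: it is quoted as a classical theorem with a citation to Markov, so there is no in-paper argument to compare against, and your proposal must be judged on its own. Your forward direction is fine and is indeed already contained in the surrounding text. The converse, however, is only a strategy outline, and you have explicitly deferred the entire mathematical content of the theorem --- the generic one-parameter (Cerf-type) analysis of the isotopy relative to the fibration of $\R^3\setminus A$ by half-planes, the classification of the codimension-one degenerations, and the translation of each degeneration into Markov moves. Naming the obstacle is not the same as overcoming it, so as written this is not a proof of the hard direction.

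Beyond the missing analysis, two of the assertions you do make are wrong or too quick. First, your claim that ``no Markov move changes the winding about $A$'' is false: the linking number of a closed $n$-braid with its axis is exactly $n$, and a stabilization changes $n$ by one, so stabilization is precisely the move that accounts for a strand passing through the axis; your stated reason for treating type-(ii) events separately therefore does not stand, even though some separate treatment is indeed needed. Second, a birth or death of a cancelling pair of $\theta$-critical points produces a local wrinkle that does \emph{not} wind once around the axis, whereas a Markov stabilization adds a full extra strand encircling $A$; identifying the former with the latter (up to conjugation and further isotopy through braided links) is itself a substantive lemma --- it is essentially the content of the ``L-move'' reformulation of Markov's theorem due to Lambropoulou and Rourke, or of Morton's threading argument --- and cannot be dismissed as ``precisely a positive or negative (de)stabilization.'' To complete the argument you would need to state and prove that intermediate lemma, carry out the general-position argument in detail, and verify that no other codimension-one phenomena (e.g.\ simultaneous tangencies, tangencies occurring on the axis) obstruct the reduction.
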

Thus we see that the study of knots can be encoded in the study of the braid groups. We also remark that all the above statements hold for links in $S^3$ as well as $\R^3$ and we will switch between these two settings when convenient.

\subsection{Contact structures}
A contact structure $\xi$ on a 3--manifold $M$ is a 2--dimensional sub-bundle of the tangent bundle of $M$ that is not tangent to any surfaces along an open subset of the surface. This is most conveniently expressed in terms of a (locally defined) 1--form $\alpha$ such that $\xi=\ker \alpha$ and $\alpha\wedge d\alpha\not=0$ at any point. The assumption that we can choose $\alpha$ globally is equivalent to $\xi$ being orientable which we will assume throughout this paper.  The canonically example of a contact structure is $\xi_{std}=\ker (dz+ r^2d\theta)$ on $\R^3$, see Figure~\ref{firstex}. 
\begin{figure}[htb]
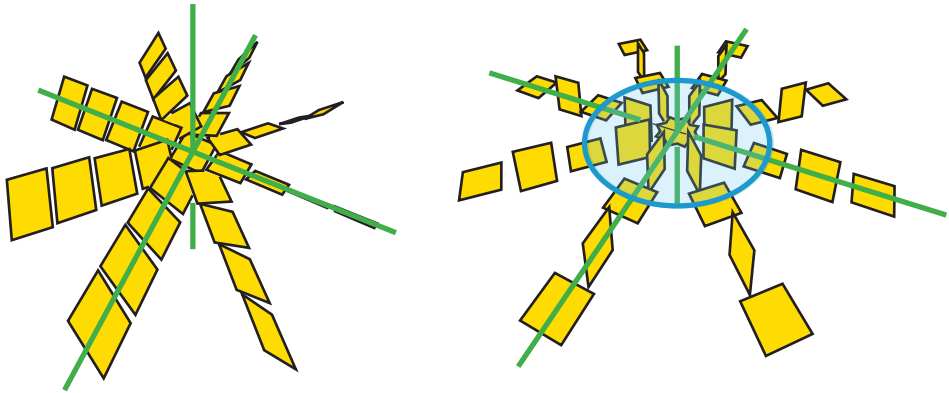

\begin{overpic}
{fig/firstexample}
\end{overpic}
\caption{The contact structure $\xi_{std}$ and $\xi_{ot}$ with the overtwisted disk indicated on the right. (Figure courtesy of S.\ Sch\"onenberger.)}
\label{firstex}
\end{figure}
A well known theorem of Darboux, see \cite{Geiges08}, says that any contact structure on a 3--manifold is locally equivalent to $(\R^3, \xi_{std})$. So we could have defined a contact structure to be a 2--dimensional sub-bundle $\xi$ of the tangent bundle that is locally modeled on $(\R^3, \xi_{std})$. Compactifying $\R^3$ to $S^3$ we get an induced contact structure $\xi_{std}$ on $S^3$ that can also be seen as the set of complex tangencies to $S^3$ when $S^3$ is thought of as the unit sphere in $\C^2$. 

In dimension 3 contact structures fall into one of two categories: tight and overtwisted. We say a contact structure $\xi$ on $M$ is overtwisted if there is a disk $D$ embedded in $M$ such that $D$ is tangent to $\xi$ along its boundary and at one point on the interior. Such a disk is called an overtwisted disk. If no such disk exists then the contact structure is called tight. An example of an overtwisted disk can be seen in $\R^3$ with the contact structure $\xi_{ot}=\ker (\cos r\, dz + r\sin r\, d\theta)$ as the disk of radius $\pi$ in the plane $\{z=0\}$. See Figure~\ref{firstex}.

We will return to the tight verses overtwisted dichotomy shortly, but first recall a few other basic definitions and facts about contact structures. In the study of contact structures it is important to consider knots adapted to a contact structure in various ways. We call a knot $K$ in a contact manifold $(M,\xi)$ \dfn{Legendrian} if $K$ is always tangent to $\xi$ and we call it \dfn{transverse} if $T_xK$ is transverse to $\xi_x$ in $T_xM$ for all $x\in K$. When considering isotopies of such knots we always consider isotopies through knots with the same property. It turns out that each topological knot type can be realized by many different Legendrian and transverse knots (that is a topological knot can be topologically isotoped to be a Legendrian or transverse knot in many different ways). For more on Legendrian and transverse knots see \cite{Etnyre05}, but here we just recall some basic invariants of such knots. First there are two classical invariants of a Legendrian knot $K$. There is the framing $fr(K,\xi)$ that $\xi$ gives to $K$. If $K$ is null-homologous it also has a framing coming from a Seifert surface $\Sigma$, that is a surface with boundary $K$. In this case the difference between $fr(K,\xi)$ and the Seifert framing is called the \dfn{Thurston-Bennequin invariant} of $K$ and is denoted $tb(K)$. It is easy to show that $\xi$ restricted to $\Sigma$ is trivial, thus we can pick a non-zero vector field $v$ in $\xi$ along $\Sigma$. Now if $K$ is oriented then we can also take an oriented tangent vector field $w$ along $K$. The \dfn{rotation number} of $K$ is the rotation of $w$ along $K$ with respect to $v$. We denote this number by $r(K)$. One may easily check that Legendrian knots that are Legendrian isotopic have the same Thurston-Bennequin invariants and rotation numbers. 

Later we will be interested in surgeries on Legendrian knots. Specifically, given a Legendrian knot $L$ in a contact manifold $(M,\xi)$ then $L$ has a neighborhood that is contactomorphic to a product neighborhood of $S^1\times \{(0,0)\}$ in $S^1\times \R^2$ with the contact structure $dz-y\, d\theta$ where $(y,z)$ are Euclidean coordinates on $\R^2$ and $\theta$ is the angular coordinate on $S^1$. If one removes this neighborhood and glues back in a solid torus to perform $\pm 1$--surgery, with respect to the framing of $L$ given by the contact planes, then there is a unique way to extend the contact structure on the complement of the neighborhood over the surgery torus so that it is tight on the surgery torus. The resulting contact manifold is called $\pm 1$--contact surgery on $(M\xi)$ along $L$. Also, $-1$--contact surgery is called Legendrian surgery. It is known that all contact 3--manifolds can be obtained from the standard contact structure on $S^3$ by a sequence of $\pm 1$--contact surgeries.

Now given a transverse knot $K$ that is the boundary of a surface $\Sigma$ we can again trivialize $\xi|_\Sigma$ and use a non-zero section of $\xi|_\Sigma$ to push off a copy $K'$ of $K$. Then the \dfn{self-linking number} of $K$ is just the linking number of $K$ and $K'$ (that is the intersection number of $K'$ and $\Sigma$). We denote this number by $sl(K)$ and can easily see that it is an invariant of the transverse isotopy class of $K$. 

We finish this section by briefly reviewing various types of fillings. For a more leisurely discussion see \cite{Etnyre98}. We say a contact manifold $(M,\xi)$ is \dfn{strongly symplectically filled} by the symplectic manifold $(X,\omega)$ if $X$ is a compact manifold with $\partial X=M$ and there is a vector field $v$ on $X$ defined near the boundary of $X$ so that $v$ is transverse to $\partial X$, the flow of $v$ dilates $\omega$ (that is $L_v\omega=\omega$ where $L$ stands for the Lie derivative), and $\alpha = (\iota_v\omega)|_M$ is a contact form for $\xi$. We say that $(X,\omega)$ is a \dfn{weak symplectic filling} of $(M,\xi)$ if again $X$ is compact, $\partial X=M$ and $\omega$ is a symplectic form when restricted to $\xi$. One can easily see that if $(X,\omega)$ is a strong symplectic filling of $(M,\xi)$ then it is also a weak symplectic filling. (Sometimes the word symplectic is left out when talking about fillings.) One key fact about fillings is given in the following results.
\begin{thm}[Gromov 1985, \cite{Gromov85}; Eliashberg 1990, \cite{Eliashberg90b}]
Any weakly symplectically fillable contact structure is tight. 
\end{thm}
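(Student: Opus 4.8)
The plan is to argue by contradiction using Gromov's theory of pseudoholomorphic curves, in the form pioneered for exactly this purpose. Suppose $(M,\xi)$ is weakly filled by $(X,\omega)$ but that $\xi$ is overtwisted, so that $M$ contains an overtwisted disk $D$. After a $C^0$-small isotopy I would arrange that the characteristic foliation on $D$ (the singular foliation induced by $\xi$) has a single elliptic singularity $e$ at its center and that $\partial D$ is a closed leaf, i.e. a limit cycle. The idea is then to fill $X$ with a family of $J$-holomorphic disks whose boundaries trace out the leaves of this foliation, and to show that the resulting moduli space cannot behave consistently.

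First I would choose an almost complex structure $J$ on $X$ that is tamed by $\omega$ and adapted to the boundary: along $M$ I want the contact planes $\xi$ to be $J$-invariant, so that $D$ becomes a totally real surface away from the singularities of its characteristic foliation. Here the weak filling hypothesis enters in an essential way: the condition that $\omega|_\xi$ be a positive area form is precisely what guarantees that $J$-holomorphic disks with boundary on $D$ have positive, and suitably controlled, symplectic area. Near the elliptic point $e$ the surface $D$ is totally real and Bishop's local existence theorem produces a $1$-parameter family of small embedded $J$-holomorphic disks $\{D_t\}$ with $\partial D_t\subset D$, shrinking to the constant disk at $e$ as $t\to 0$.

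Next I would study the connected component $\mathcal{M}$ of the moduli space of embedded $J$-holomorphic disks with boundary on $D$ that contains this Bishop family. A Maslov-index computation (the boundary lies on a totally real surface inside a $3$-manifold) shows that the expected dimension of $\mathcal{M}$ is $1$, so after a generic perturbation $\mathcal{M}$ is a smooth $1$-manifold. The heart of the argument is a compactness statement: using the area bound coming from $\omega|_\xi>0$ together with Gromov compactness, one wants to show that $\mathcal{M}$ is compact once one rules out the usual degenerations. Sphere bubbles are excluded by an energy/area argument (or by arranging that $\omega$ is exact in a collar of $M$), and boundary bubbling is controlled by the totally real boundary condition together with the monotonicity of holomorphic curves. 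The outcome should be that $\mathcal{M}$ is a compact $1$-manifold with boundary, one boundary point of which is the collapsed disk at $e$.

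The contradiction then comes from the geometry of the overtwisted disk. The boundaries $\partial D_t$ sweep out the concentric leaves of the characteristic foliation, so the family $\{D_t\}$ cannot simply terminate in the interior; its only available ``other end'' would be a limiting disk whose boundary is the limit cycle $\partial D$. But a holomorphic disk whose boundary is tangent to $\xi$ along the limit cycle is incompatible with the overtwisted model: the limit cycle is a closed leaf, and the filling family has nowhere to go, so $\mathcal{M}$ cannot close up as a $1$-manifold. This contradiction forces $\xi$ to be tight. I expect the main obstacle to be exactly the compactness and degeneration analysis in the weak setting, since for a weak filling $\omega$ need neither be exact nor admit a Liouville field near $M$; this is the technical point that separates Eliashberg's treatment of weak fillings from the strongly fillable case, and it is handled by exploiting $\omega|_\xi>0$ (and, where needed, a deformation of $\omega$ in a collar) to recover the energy control that a strong filling would supply automatically.
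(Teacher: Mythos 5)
The paper does not prove this theorem; it is quoted as background and attributed to Gromov and Eliashberg, so there is no in-paper argument to compare against. Your sketch is the standard filling-by-holomorphic-disks proof from those sources: normalize the characteristic foliation of the overtwisted disk $D$ to have one elliptic point $e$ and a closed leaf on $\partial D$, launch a Bishop family of $J$-holomorphic disks with boundary on the totally real part of $D$, establish a uniform $\omega$-area bound (the disks are homotopic rel $D$ to constants, so their area is controlled by $\int_D\omega$, which is where closedness of $\omega$ and $\omega|_\xi>0$ enter for a weak filling), and derive a contradiction from the structure of the resulting one-dimensional moduli space. That is the right architecture, and you correctly identify the compactness/degeneration analysis as the place where the weak-filling hypothesis does real work.

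One point in your endgame is stated incorrectly and would derail the contradiction as written: the boundaries of the Bishop disks do \emph{not} trace out leaves of the characteristic foliation of $D$. For a $J$ adapted to $\xi$ along $M$, the boundary of a $J$-holomorphic disk lying on $D$ is everywhere \emph{positively transverse} to the characteristic foliation (in the standard model the leaves spiral from $e$ out to the limit cycle, and the disk boundaries are roughly concentric circles crossing them). The contradiction at $\partial D$ comes precisely from this transversality: a limiting disk boundary would have to be a closed curve transverse to the foliation accumulating on the Legendrian limit cycle $\partial D$, which is impossible, so the family can neither escape through $\partial D$ nor terminate in the interior (no bubbling by the area bound and positivity of intersections), leaving a compact $1$-manifold with exactly one boundary point. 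As a sketch your proposal is faithful to the cited proof once this is corrected, but the Fredholm, transversality, and Gromov-compactness steps you defer are the entire technical content; as written this is an outline of Eliashberg's argument rather than a self-contained proof.
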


We have one final type of filling of a contact structure. A complex manifold $(X,J)$ is called a \dfn{Stein manifold} if it admits a proper function $\phi:X\to \R$ that is bounded below and plurisubharmonic. By plurisubharmonic we mean that $-d(J^* d \phi)$ is a symplectic form on $X$. A Stein domain is a regular sub-level set of $\phi$ and we say that $(M,\xi)$ is Stein fillable if it is the boundary of a Stein domain defined by $\phi:X\to\R$ and $-J^*d\phi$ restricted to $M$ is a contact form for $\xi$. One can easily check that if a contact manifold is Stein fillable then it is also strongly and weakly symplectically fillable. 

We end by noting that Legendrian surgery preserves all forms of fillability. That is if $(M,\xi)$ is fillable in some sense and $(M',\xi')$ is obtained from it by Legendrian surgery on some link then $(M',\xi')$ is also fillable in the same sense, \cite{Eliashberg90a, EtnyreHonda02b, Weinstein91}. 

\subsection{Braids and contact topology}
The birth of modern contact topology could very well be Bennequin's seminal paper showing that there were at least two distinct contact structures on $\R^3$. He did this by showing that all transverse knots in the standard contact structure $\xi_{std}=\ker(dz+r^2\, d\theta)$ satisfy the Bennequin inequality but transverse knots in $\xi_{ot}=\ker(\cos r\, dz + r\sin r\, d\theta)$ do not. More specifically he proved the following result. 
\begin{thm}[Bennequin 1983, \cite{Bennequin83}]\label{bennequin}
If $T$ is any knot transverse to the standard contact structure $\xi_{std}$ on $\R^3$ then 
\begin{equation}
sl(T)\leq -\chi(\Sigma),
\end{equation}
where $\Sigma$ is any Seifert surface for $T$. 
\end{thm}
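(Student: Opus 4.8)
The plan is to reduce to a braided model and then analyze an arbitrary Seifert surface through the singular foliation that $\xi_{std}$ cuts on it. First I would record a reduction used throughout: since the Seifert (zero) framing of a null-homologous knot in $\R^3$ is canonical, independent of the chosen Seifert surface, the number $sl(T)$ does not depend on $\Sigma$. Hence the asserted inequality for \emph{every} $\Sigma$ is equivalent to the single inequality $sl(T)\le \min_\Sigma\bigl(-\chi(\Sigma)\bigr)=2g(T)-1$ coming from a minimal genus surface. In particular it does \emph{not} suffice to test the bound on the surface produced by Seifert's algorithm; one genuinely must treat an arbitrary $\Sigma$.

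As a warm-up I would use a transverse analogue of Alexander's theorem (due to Bennequin) to isotope $T$, through transverse knots, to a closed braid $\overline{w}$ about the $z$-axis, transverse to the half-plane pages of the disk open book supporting $\xi_{std}$. For such a braid on $n$ strands with algebraic crossing number $e(w)$ one checks directly that $sl(\overline{w})=e(w)-n$, while the Bennequin surface from Seifert's algorithm has $-\chi=c-n$, where $c\ge|e(w)|$ is the total number of crossings. This already yields the inequality for that particular surface, with equality exactly for positive braids, but as noted it is in general strictly weaker than the assertion for a minimal genus surface.

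For an arbitrary $\Sigma$ I would perturb it so that its characteristic foliation $\Sigma_\xi=T\Sigma\cap\xi$ is generic, with only elliptic (index $+1$) and hyperbolic (index $-1$) singularities, each signed $\pm$ according to whether the coorientations of $\xi$ and $\Sigma$ agree; since $T$ is positively transverse, the leaves meet $\partial\Sigma$ cleanly with no boundary tangencies. Writing $e_\pm,h_\pm$ for the signed counts, Poincar\'e--Hopf gives $\chi(\Sigma)=(e_++e_-)-(h_++h_-)$, while computing $sl(T)$ as the relative Euler number of $\xi|_\Sigma$ against the transverse framing gives $sl(T)=(h_+-e_+)-(h_--e_-)$. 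Adding these, the elliptic and positive-hyperbolic contributions cancel and one is left with the clean identity
\[
sl(T)=-\chi(\Sigma)+2\,(e_- - h_-).
\]
Thus the theorem is equivalent to the inequality $e_-\le h_-$ between the negative singularities.

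The main obstacle, and the real content of the theorem, is precisely this last inequality, and it is here that the standard structure must enter in an essential and non-circular way: one cannot simply invoke tightness of $\xi_{std}$, since the present inequality is essentially what \emph{establishes} that tightness, and indeed in $\xi_{ot}$ an overtwisted disk manufactures unpaired negative elliptic points and the bound fails. Instead I would argue directly in the standard model. Eliashberg's elimination lemma lets one cancel an elliptic and a hyperbolic singularity of the same sign whenever they are joined by a single leaf of $\Sigma_\xi$; the task is to show that in $\xi_{std}$ this can be iterated until no negative elliptic points remain, leaving $sl(T)=-\chi(\Sigma)-2h_-\le-\chi(\Sigma)$. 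Concretely I would exploit the disk open book supporting $\xi_{std}$: slicing $\Sigma$ by the pages $\{\theta=\text{const}\}$ and tracking how the arcs of intersection are born, die, and recombine as $\theta$ increases furnishes a combinatorial bookkeeping of the singularities, and the monotonicity built into the standard model should forbid the configurations that would leave an uncancelled negative elliptic point. Making this bookkeeping rigorous—in particular ruling out by hand the overtwisted-type obstruction that defeats the inequality in $\xi_{ot}$—is both the crux and the step I expect to be hardest.
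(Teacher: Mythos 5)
The paper itself does not prove this theorem: it cites Bennequin and remarks only that the proof is ``a beautiful application of braid theoretic techniques,'' so there is no in-paper argument to compare against line by line. Your setup is correct and well organized: the reduction to a single minimal-genus surface, the braiding of $T$ and the computation $sl(\overline{w})=e(w)-n$ (this is Lemma~\ref{braidsl}), the Poincar\'e--Hopf count $\chi(\Sigma)=(e_++e_-)-(h_++h_-)$, the relative Euler class computation, and the resulting identity $sl(T)=-\chi(\Sigma)+2(e_--h_-)$ are all standard and accurate, and you correctly isolate the theorem as equivalent to $e_-\le h_-$.

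The genuine gap is that you never establish $e_-\le h_-$; you describe a plan (slice $\Sigma$ by the pages $\{\theta=\text{const}\}$, do combinatorial bookkeeping, invoke ``monotonicity'') and then explicitly defer it as ``the step I expect to be hardest.'' But the entire content of the theorem is concentrated in exactly that step --- everything before it is formal and holds verbatim in $\xi_{ot}$, where the conclusion is false --- so as written this is a proof outline, not a proof. Two remarks on completing it. First, your circularity worry is overstated within the logic of this paper: tightness of $\xi_{std}$ does not have to be derived from the Bennequin inequality; it follows from Stein/symplectic fillability of the round ball via Gromov--Eliashberg, and then the implication \eqref{it1}$\Rightarrow$\eqref{it2} of Theorem~\ref{eliashberg-tight} (Eliashberg's elimination-lemma argument, which is precisely the $e_-\le h_-$ statement you need) finishes the proof. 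That route is legitimate but high-tech and anachronistic relative to Bennequin 1983. Second, if you want the elementary route, the page-slicing bookkeeping you gesture at is essentially Bennequin's original braid-foliation argument (later systematized by Birman--Menasco), and carrying it out requires a genuine induction on the foliation of $\Sigma$ by the pages, controlling how $b$-arcs and $a$-arcs appear and cancel; none of that is supplied here. Either way, the decisive inequality needs an actual argument or an explicit citation, not a forecast.
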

As the genus of a knot is determined by its Euler characteristic this gives a lower bound on the genus of a knot. So not only did this theorem indicate that there is more than one contact structure on $\R^3$  it also shows that contact structures can give interesting purely topological information. The Bennequin inequality had taken on a central role in contact topology thanks in large part to Eliashberg connecting it with tightness.
\begin{thm}[Eliashberg, 1992 \cite{Eliashberg92a}]\label{eliashberg-tight}
Let $\xi$ be a contact structure on a 3--manifold $M$. Then the following are equivalent:
\begin{enumerate}
\item\label{it1} The contact structure $\xi$ is tight ({\em ie} contains no overtwisted disks).
\item\label{it1a} The contact structure $\xi$ contains no embedded disks with Legendrian boundary and contact framing 0.
\item\label{it2} All transverse knots in $(M,\xi)$ satisfy the Bennequin bound. 
\item\label{it3} There is a topological knot type such that any transverse knot in that topological knot type satisfies the Bennequin bound. 
\item\label{it4} There is a topological knot type such that any transverse knot in that topological knot type satisfies {\em any} upper bound bound on their self-linking numbers. 
\end{enumerate}
\end{thm}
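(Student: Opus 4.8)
The plan is to prove the five statements equivalent by closing a cycle of implications together with one auxiliary equivalence. Concretely, I would establish $\ref{it1}\Leftrightarrow\ref{it1a}$ and then run the cycle $\ref{it1a}\Rightarrow\ref{it2}\Rightarrow\ref{it3}\Rightarrow\ref{it4}\Rightarrow\ref{it1}$. Two of the arrows are purely formal: $\ref{it2}\Rightarrow\ref{it3}$ simply restricts a statement quantified over all transverse knots to a single, arbitrarily chosen topological knot type, and $\ref{it3}\Rightarrow\ref{it4}$ observes that the Bennequin bound $sl(T)\leq-\chi(\Sigma)$ is in particular \emph{an} upper bound on the self-linking numbers within a fixed knot type. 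Thus all the real content sits in the equivalence $\ref{it1}\Leftrightarrow\ref{it1a}$, the inequality $\ref{it1a}\Rightarrow\ref{it2}$, and the ``converse'' implication $\ref{it4}\Rightarrow\ref{it1}$.

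For $\ref{it1}\Leftrightarrow\ref{it1a}$ I would work with the local model of an overtwisted disk and with convex surface theory. An overtwisted disk is tangent to $\xi$ along its (Legendrian) boundary, and a direct look at its characteristic foliation shows the contact framing of the boundary equals the disk framing; hence an overtwisted disk \emph{is} an embedded disk with Legendrian boundary and contact framing $0$, giving $\neg\ref{it1}\Rightarrow\neg\ref{it1a}$. For the reverse, given an embedded disk $D$ with Legendrian boundary and contact framing $0$, I would make $D$ convex; since the contact framing equals the disk framing, the boundary is disjoint from the dividing set $\Gamma$, so $\Gamma$ consists of closed curves in the interior of $D$. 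As any such curve is contractible, the Giroux criterion produces an overtwisted disk, giving $\neg\ref{it1a}\Rightarrow\neg\ref{it1}$.

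The inequality $\ref{it1a}\Rightarrow\ref{it2}$ is the heart of the theorem and the extension of Bennequin's Theorem~\ref{bennequin} from $\xi_{std}$ to every tight structure. Given a transverse knot $T=\partial\Sigma$, I would perturb $\Sigma$ so that its characteristic foliation is Morse--Smale with only elliptic and hyperbolic singularities, each carrying a sign, with counts $e_\pm,h_\pm$. A Poincar\'e--Hopf count gives $\chi(\Sigma)=(e_++e_-)-(h_++h_-)$, while measuring the rotation of $\xi$ against the surface framing along $T$ gives $sl(T)=-(e_+-h_+)+(e_--h_-)$, so that $sl(T)+\chi(\Sigma)=2(e_--h_-)$. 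It then suffices to remove the negative elliptic points: since $\ref{it1a}$ (equivalently $\ref{it1}$) forbids overtwisted disks, the characteristic foliation has no closed leaves, so by Poincar\'e--Bendixson every leaf runs between singularities and Giroux's elimination lemma lets me cancel each negative elliptic point against a negative hyperbolic point until none of the former remain. This achieves $e_-=0$, whence $sl(T)+\chi(\Sigma)=-2h_-\leq0$.

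Finally, for $\ref{it4}\Rightarrow\ref{it1}$ I would argue by contrapositive: if $\xi$ is overtwisted I must show that in \emph{every} topological knot type the self-linking numbers are unbounded above. The boundary of an overtwisted disk is a Legendrian unknot with contact framing $0$, whose transverse push-off is a transverse unknot $U$ with $sl(U)=0$, already exceeding its Bennequin bound of $-1$. Taking transverse connected sums of copies of $U$ produces unknots of self-linking $0,1,2,\dots$, since self-linking adds with a $+1$ correction under transverse connected sum. Connect-summing a given transverse representative of any knot type with these high-self-linking unknots preserves the knot type while driving the self-linking number to $+\infty$, so no upper bound can hold, contradicting $\ref{it4}$. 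The main obstacle is the inequality $\ref{it1a}\Rightarrow\ref{it2}$: unlike the other, largely structural or formal, steps, it requires genuinely controlling the cancellations in the characteristic foliation so that precisely the positive singularities survive --- this is Eliashberg's essential contribution.
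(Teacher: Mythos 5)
Your decomposition matches the one the paper sketches: \ref{it2} and \ref{it1a} each imply \ref{it1} easily, the hard direction \ref{it1}$\Rightarrow$\ref{it2} is Eliashberg's characteristic-foliation argument via the elimination lemma (your signed count $sl(T)+\chi(\Sigma)=2(e_--h_-)$ and the reduction to $e_-=0$ is exactly the standard exposition of it, and both you and the paper leave the genuinely hard cancellation analysis to the references), \ref{it2}$\Rightarrow$\ref{it3}$\Rightarrow$\ref{it4} is formal, and \ref{it4}$\Rightarrow$\ref{it1} goes through a transverse unknot violating the Bennequin bound together with additivity of $sl$ under connected sum. The one structural departure is that you prove \ref{it1}$\Leftrightarrow$\ref{it1a} with convex surfaces and the Giroux criterion rather than directly with characteristic foliations; that is a legitimate (if anachronistic) route, but you must also dispose of the case $\Gamma=\emptyset$, since the Giroux criterion only hands you an overtwisted disk when there is a contractible dividing curve to apply it to. (One rules this case out, e.g., by noting that a dividing-set-free convex disk would be all of $\Sigma_+$, which is incompatible with its boundary being a closed Legendrian curve of twisting $0$.)

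The genuine error is in \ref{it4}$\Rightarrow$\ref{it1}: the transverse push-off $U$ of the boundary of an overtwisted disk has $sl(U)=+1$, not $0$. The Legendrian boundary has $tb=0$ and rotation number $\pm 1$, and the push-off satisfies $sl=tb\mp r$; more decisively, by your own identity $sl(T)+\chi(\Sigma)=2(e_--h_-)$ the self-linking number of any transverse \emph{knot} satisfies $sl(T)\equiv\chi(\Sigma)=1-2g\equiv 1\pmod 2$, so both $sl(U)=0$ and your claimed sequence of unknots with $sl=0,1,2,\dots$ are impossible. The paper instead takes the boundary of a slight enlargement of the overtwisted disk, a transverse unknot with $sl=1$, which already violates the Bennequin bound $sl\leq -1$ for the unknot. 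The repair is immediate and does not disturb your argument: connect-summing with copies of this unknot raises $sl$ by $2$ each time (via $sl(K\#K')=sl(K)+sl(K')+1$), producing unknots with $sl=1,3,5,\dots$ and, after summing with a fixed transverse representative of any given knot type, transverse knots of that type with self-linking number unbounded above, which is all that the contrapositive of \ref{it4}$\Rightarrow$\ref{it1} requires.
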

The implications that both Item~\eqref{it2} and Item~\eqref{it1a} each imply \eqref{it1} is clear. The reverse implications were established by Eliashberg in \cite{Eliashberg92a} by studying characteristic foliations and careful application of the Giroux cancellation lemma (see \cite{Etnyre03} for an exposition of this). Clearly Item~\eqref{it2} implies Item~\eqref{it3} which in turn implies Item~\eqref{it4}. The fact that Item~\eqref{it4} implies Item~\eqref{it1} is easily established by showing that if $\xi$ is not tight then there is no bound for any knot type by noting that an overtwisted contact structure has a transverse unknot with self-linking number 1 (just take the boundary of a slight enlargement of an overtwisted disk) and that the self-linking number is additive plus one under connected sum ({\em ie} $sl(K\# K')=sl(K)+sl(K')+1$). In particular while clever the proof of Theorem~\ref{eliashberg-tight} is elementary. 

The proof of Theorem~\ref{bennequin} is a beautiful application of braid theoretic techniques, see \cite{Bennequin83} for details. Here we will just indicate the connection between braids and contact geometry. 

We first observe that a closed braid in $\R^3$ is naturally a transverse knot in $\xi_{std}$. To see this notice that when the $r$-coordinate is large the contact planes $\xi_{std}$ are almost tangent to the half-spaces $\{\theta=c\}$. As, by definition, a closed braid transversely intersects these half spaces if we isotope a closed braid so that its $r$-coordinate is sufficiently large we see that it is transverse to $\xi_{std}$ too. Moreover given any two representatives of the same closed braids with large enough $r$-coordinates they will clearly be transversely isotopic. Thus we see that braids naturally give us transverse knots in $(\R^3, \xi_{std})$. Bennequin observed that all transverse knots can be so expressed. 
\begin{thm}[Bennequin, 1983 \cite{Bennequin83}]
Let $T$ be a transverse knot in $(\R^3,\xi_{std})$. Then $T$ can be isotoped through transverse knots to be the closure of a braid. 
\end{thm}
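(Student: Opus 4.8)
The plan is to show that any transverse $T$ can be brought, through transverse knots, to a position in which the angular coordinate $\theta$ is strictly increasing along $T$ and $T$ is disjoint from the $z$--axis; by the discussion preceding the statement, such a $T$ is exactly the transverse realization of a closed braid. Throughout I work in cylindrical coordinates $(r,\theta,z)$, write $\alpha = dz + r^2\,d\theta$, and recall that transversality means $\alpha(\dot T) = \dot z + r^2\dot\theta > 0$ for a suitable orientation of $T$.

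First I would put $T$ in general position: after a $C^\infty$--small (hence still transverse, since transversality is an open condition) perturbation, $T$ is disjoint from the $z$--axis and the restriction $\theta|_T$ is Morse, so that the half--plane pages $\{\theta = c\}$ cut $T$ into finitely many arcs on each of which $\theta$ is strictly monotone. Call an arc \emph{bad} if $\theta$ is decreasing on it. At each of the finitely many critical points of $\theta|_T$, that is, at the tangencies of $T$ with a page, we have $\dot\theta = 0$, so transversality forces $\dot z > 0$: the knot moves strictly upward whenever it is tangent to a page. The goal is to remove every bad arc.

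The heart of the argument is the transverse braiding move, Bennequin's analogue of Alexander's trick. Given a bad arc $c$, its two endpoints are page--tangencies at which $T$ moves upward, and $c$ crosses some band of pages in the negative $\theta$--direction. I would replace $c$ by an arc $c'$ with the same endpoints that instead runs \emph{around} the binding in the positive $\theta$--direction, pushing $c$ inward across the $z$--axis and back out the far side. Transversality of the replacement is arranged by exploiting that the Reeb direction $\partial_z$ is ``free'': along $c'$ one keeps $\dot z$ sufficiently positive, so that $\dot z + r^2\dot\theta > 0$ holds both where $\dot\theta$ is large and positive (near the binding, where $r$, and hence the penalty $r^2\dot\theta$, is small) and where $r$ is large. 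One must also exhibit an ambient isotopy carrying $c$ to $c'$, not merely a cut--and--paste replacement; this is built from the same local model. To see that the procedure terminates I would take as complexity the number of bad arcs, or the total negative winding $-\int_{\{\dot\theta<0\}} d\theta$, and check that each braiding move strictly decreases it.

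I expect the main obstacle to be precisely this move: verifying that the modification is realized as an isotopy \emph{through transverse knots} and not merely as a smooth isotopy of the underlying topological knot. Maintaining $\alpha(\dot T) > 0$ uniformly while the arc sweeps across the binding --- where $r\to 0$ and the term $r^2\,d\theta$ degenerates --- is the delicate point, and is exactly where the positivity of $\dot z$ at the tangencies, together with the freedom to increase $z$, is used. Once all bad arcs are eliminated, $\theta|_T$ has no critical points and is strictly increasing, so $T$ is braided about the $z$--axis and is the transverse closure of the resulting braid $w$, which completes the proof.
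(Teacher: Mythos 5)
Your proposal is correct and follows exactly the route the paper indicates: the paper declines to give details, saying only that the standard braiding procedure behind Alexander's theorem ``can be easily adapted to transverse knots,'' and that is precisely what you carry out (put $\theta|_T$ in Morse position, then throw each bad arc over the $z$--axis, using the freedom in $\dot z$ near the axis, where $r^2\,d\theta$ is small, to keep $\alpha(\dot T)>0$ throughout the sweep). The one point worth making explicit, just as in the usual proof of Alexander's theorem, is that a bad arc may have to be subdivided into small pieces so that each piece can be swept across the axis without colliding with the rest of $T$, keeping the isotopy through \emph{embedded} transverse knots.
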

We will not give a detailed proof hear, but simply notice that standard braiding procedures for knots (that is techniques to proof Alexander's theorem) can be easily adapted to transverse knots. The reader is encouraged to prove this themselves. 

We briefly recall another  braid theory result that generalize from topological knots to transverse knots. Specifically the Markov Theorem has the following transverse analog. 
\begin{thm}[Orevkov and Shevchishin, 2003 \cite{OrevkovShevchishin03}; Wrinkle, 2002 \cite{Wrinkle02}]\label{posstab}
The closure of two braids represent transversely isotopic transverse knot in $(R^3,\xi_{std})$ if and only if they are related by (1) conjugation in the braid group ({\em ie} isotopy as close braids) and (2) {\em positive} Markov stabilizations and destabilizations. 
\end{thm}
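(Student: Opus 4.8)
The plan is to prove the two directions separately, treating the ``if'' direction as routine and concentrating the work on the ``only if'' direction, where the positivity of the stabilizations is the real content.

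For the easy direction I would first record the self-linking formula: if $w$ is an $n$--braid with exponent sum $e(w)$ (its image in $\Z$ under $\sigma_i\mapsto 1$), then its transverse closure satisfies $sl(\overline{w}) = e(w) - n$. Conjugation $w\mapsto bwb^{-1}$ realizes an isotopy through closed braids, and since every closed braid is transverse to $\xi_{std}$ once pushed to large $r$, this isotopy is automatically transverse; hence conjugate braids have transversely isotopic closures. For a positive stabilization $w\mapsto w\sigma_n$ one both checks that $sl$ is unchanged by the formula (here $e$ and $n$ each increase by one) and exhibits an explicit transverse isotopy pulling the new trivial strand back across the added positive crossing; this is the transverse refinement of the topological stabilization isotopy already discussed. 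A negative stabilization, by contrast, drops $sl$ by $2$ and so cannot be a transverse isotopy, which is precisely the asymmetry the theorem encodes.

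For the hard direction I would realize a transverse isotopy between $\overline{w}$ and $\overline{w'}$ as a generic one--parameter family $L_t$ of transverse knots and keep track of the braiding. By Bennequin's braiding theorem each $L_t$ is transversely isotopic to a closed braid; the goal is to follow the braid word along the family and identify the elementary events. Working in cylindrical coordinates, ``braided'' means the $\theta$--coordinate is monotone along the link, and for a generic family this monotonicity fails only at finitely many times and in a codimension--one manner. I would classify these failures: away from them the family is a braid isotopy, which the Artin presentation lets me rewrite as braid relations together with conjugations; at a failure a single strand acquires one tangency to a half--plane $\{\theta = \theta_0\}$, and pushing through it changes the braid index by one, i.e.\ produces a stabilization or a destabilization. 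This is the braid--foliation analysis adapted to the transverse setting, and it reduces the theorem to controlling the sign of each index--changing event.

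The main obstacle, and the entire point of the theorem, is to show that every such stabilization is \emph{positive}. Here I would use that $L_t$ is transverse to $\xi_{std}$ for all $t$, together with the fact that for $r$ large the planes $\xi_{std}$ are nearly tangent to the half--planes $\{\theta = c\}$ and carry a fixed co--orientation. At the moment a strand becomes tangent to $\{\theta=\theta_0\}$, positivity of the transverse condition should pin down the local model of the kink to have a \emph{positive} crossing, forcing the associated stabilization to be positive and ruling out the negative model outright. Making this rigorous is the crux: it requires either a careful normal--form and transversality argument for the family near the tangency (the route closest to Wrinkle's braid--foliation proof) or the pseudoholomorphic--curve machinery of Orevkov and Shevchishin. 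Once the sign is controlled, assembling the events shows that $w$ and $w'$ are joined by conjugations and positive (de)stabilizations, completing the proof.
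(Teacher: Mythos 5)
The paper does not prove this theorem; it quotes it from Orevkov--Shevchishin and Wrinkle, so there is no in-text argument to measure you against, only the cited sources. Your easy direction is correct and complete in outline: conjugation is an isotopy through closed braids and hence transverse at large $r$, the formula $sl(\overline{w})=e(w)-n$ shows a positive stabilization preserves $sl$ while a negative one drops it by $2$, and the explicit transverse isotopy absorbing a positive kink is standard.

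The genuine gap is in the hard direction, and it sits exactly where you flag it: you never establish that each index-changing event in a generic family can be realized by a \emph{positive} (de)stabilization, and the heuristic you offer for it does not close. Two concrete problems. First, the ``for $r$ large the planes are nearly tangent to the half-planes $\{\theta=c\}$'' picture gives no leverage at the critical moment: transversality to $\xi_{std}=\ker(dz+r^2d\theta)$ does not force transversality to the pages (only the converse holds, for braided links at large $r$), so a tangency of a strand to $\{\theta=\theta_0\}$ can occur anywhere, and at that point the transversality condition only says $dz>0$ along the strand. That is a constraint at a single point; the sign of a stabilization is a statement about how the newborn loop links the axis relative to the rest of the braid, and extracting it requires a genuine normal-form/transversality analysis of the one-parameter family near the degenerate configuration (or, in Wrinkle's version, the Birman--Menasco braid-foliation machinery applied to the trace of the isotopy) --- not just the co-orientation of $\xi_{std}$ at the tangency. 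Second, the self-linking computation from your easy direction only shows that negative stabilizations and negative destabilizations occur in equal numbers across the whole family; it does not show they can be cancelled or avoided, which is the actual content of the theorem and the reason the statement is hard. As written, your proposal is a correct road map with the decisive step deferred to the papers being cited.
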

The two theorems above say that one can study transverse knots in $(\R^3,\xi_{std})$ by just studying braids and that in this respect the only difference between transverse and topological knots is {\em negative} Markov stabilizations. In particular one can compute the self-linking number of a transverse knot in term so a braid representing it. 
\begin{lem}[Bennequin, 1983 \cite{Bennequin83}]\label{braidsl}
Let $w$ be an element in the $n$-strand braid group $B(n)$. The transverse knot represented by the closure $\overline{w}$ of $w$ has self-linking number 
\[
sl(\overline{w})=\text{writhe}(w)-n,
\]
where $\text{writhe}(w)$ is the writhe of the natural digram for the closure of $w$ and can be computed as the exponent sum of the word $w$. That is if $w=\sigma^{\epsilon_1}_{i_1}\ldots \sigma^{\epsilon_k}_{i_k}$ in the braid group $B(n)$, where each $\epsilon_i$ is either $1$ or $-1$,  then $\text{writhe}(w)=\sum \epsilon_i$.
\end{lem}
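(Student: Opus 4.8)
The plan is to compute the self-linking number directly from its definition as a linking number of push-offs, choosing a particularly convenient section of $\xi_{std}$. Recall that with $\alpha = dz + r^2\, d\theta$ the radial vector field $\partial_r$ satisfies $\alpha(\partial_r)=0$, so $\partial_r$ is everywhere tangent to $\xi_{std}$ (and nonvanishing away from the braid axis $r=0$). First I would isotope $\overline{w}$ to lie at a large constant radius $r=R$, so that it is transverse to $\xi_{std}$ and braided about the $z$-axis, exactly as in the discussion preceding the lemma. Pushing $\overline{w}$ off in the $\partial_r$ direction then produces a parallel copy $\overline{w}'$ sitting at radius $R+\epsilon$, and by definition $sl(\overline{w})=lk(\overline{w},\overline{w}')$.

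The heart of the argument is to evaluate this linking number and to see the two contributions $\text{writhe}(w)$ and $-n$ separately. I would project $\overline{w}$ and $\overline{w}'$ to the cylinder $\{(\theta,z)\}$ obtained by forgetting $r$: here $\overline{w}'$ lies directly behind $\overline{w}$, since $\partial_r$ is normal to this cylinder, so the only crossings between the two copies occur near the self-crossings of $\overline{w}$, and each generator $\sigma_i^{\pm}$ contributes $\pm 1$; this accounts for a total of $\sum_j \epsilon_j=\text{writhe}(w)$. The term $-n$ is exactly the discrepancy between the framing of $\overline{w}$ coming from the product structure of the solid torus $D^2\times S^1$ (equivalently, the blackboard framing on this cylinder) and the Seifert framing used to compute $lk$ in $S^3$. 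Since $D^2\times S^1$ is identified with a neighborhood of the unknot in such a way that the product framing goes to the $0$-framing, this discrepancy equals the total winding number of $\overline{w}$ about the braid axis, which for an $n$-strand braid is exactly $n$. Diagrammatically, this is the contribution of the $n$ closure arcs, each of which the outward push-off $\overline{w}'$ must cross as the strands wind once around the axis. Combining the two contributions gives $sl(\overline{w})=\text{writhe}(w)-n$.

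The main obstacle is pinning down this winding correction rigorously, with the correct sign: one must show that passing from the natural solid-torus framing to the $S^3$ Seifert framing subtracts precisely $n$, and not merely that a correction of this shape appears. I expect the cleanest route is to interpret $sl(\overline{w})$ as a relative Euler number of $\xi_{std}$ restricted to the standard Bennequin surface $\Sigma$ (the $n$ nested Seifert disks joined by half-twisted bands) and to compute the signed count of zeros of the extension of $\partial_r|_{\overline{w}}$ over $\Sigma$; the zeros sit on the braid axis where $\partial_r$ degenerates, and bookkeeping their indices produces the $-n$. A secondary subtle point is that $\partial_r$ vanishes on the axis and hence is not a global nonvanishing section of $\xi_{std}|_\Sigma$; this is harmless because $sl$ is independent of the chosen section, but it should be addressed by isotoping the section slightly or by passing to the relative Euler class formulation. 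As sanity checks, the trivial $1$-braid gives the transverse unknot with $sl=0-1=-1$, and for a positive braid the Bennequin surface satisfies $-\chi(\Sigma)=(\text{word length})-n\geq \text{writhe}(w)-n=sl(\overline{w})$, with equality exactly when every crossing is positive, neatly recovering the sharp case of the Bennequin inequality. One could alternatively verify the formula's consistency by noting that $sl(\overline{w})-(\text{writhe}(w)-n)$ is unchanged under conjugation and under positive and negative Markov stabilizations (Theorem~\ref{posstab} and the topological Markov theorem), hence descends to a link invariant; but establishing that this invariant vanishes still requires the direct computation above.
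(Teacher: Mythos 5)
The paper itself offers no proof of this lemma---it is simply quoted from Bennequin's paper---so your argument can only be assessed on its own terms. Your overall strategy (push off by the radial vector field $\partial_r\in\xi_{std}$, read the writhe off the crossings, and locate the $-n$ in a framing correction) is the standard one, but the way you distribute the two contributions contains a genuine error. The identity $sl(\overline{w})=lk(\overline{w},\overline{w}')$ is \emph{not} true by definition for the $\partial_r$ push-off: the definition requires pushing off by a nowhere-zero section of $\xi_{std}$ restricted to a Seifert surface $\Sigma$, and every Seifert surface for $\overline{w}$ meets the $z$-axis (algebraically $n$ times, since the closed braid winds $n$ times about it), which is exactly where $\partial_r$ vanishes. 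In fact $lk(\overline{w},\overline{w}_{\partial_r})=\text{writhe}(w)$ with no further correction: in the standard annular closed-braid diagram the radially outward push-offs of the $n$ closure arcs are concentric with, and disjoint from, the original ones, so the closure arcs contribute nothing---contrary to your claim that they account for the $-n$. Your own sanity check exposes the problem: for the trivial $1$-braid the two concentric circles have linking number $0$, not $-1=sl$.

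The correct accounting is the one you relegate to a ``secondary subtle point'': $sl(\overline{w})=lk(\overline{w},\overline{w}_v)$ for a genuine nonvanishing section $v$ of $\xi_{std}|_\Sigma$, and the difference between this and $lk(\overline{w},\overline{w}_{\partial_r})$ is minus the signed count of zeros of $\partial_r|_\Sigma$, which equals $-n$: the Bennequin surface built from $n$ nested disks and half-twisted bands meets the axis in exactly $n$ positive points, at each of which $\partial_r$ has an index-$+1$ zero in $\xi_{std}$ (nearly horizontal there). So the relative Euler class computation is not an optional route to rigor; it is where the entire $-n$ comes from, and the remark that the vanishing of $\partial_r$ is ``harmless because $sl$ is independent of the chosen section'' is precisely backwards---that independence holds only among nonvanishing sections over $\Sigma$, which $\partial_r$ is not. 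With the two contributions reassigned in this way ($\text{writhe}(w)$ from the crossings, $-n$ from the zeros on the axis), your outline becomes the standard and correct proof.
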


\subsection{Open book decompositions and braids}
Given a surface $\Sigma$ with boundary and a diffeomorphism $\phi:\Sigma\to \Sigma$ that restricts to be the identity map in a neighborhood of $\partial \Sigma$ one can form the mapping torus $T_\phi$ of $\phi$, that is the front and back of $\Sigma \times [0,1]$ are identified with $\phi$:
\[
T_\phi=\Sigma\times[0,1]/(p,1)\sim(\phi(p),0).
\]
Notice that $\partial T_\phi$ is a union of copies of $S^1\times S^1$ (the product structure coming from $S^1\times[0,1]/\sim$ where $S^1$ is a boundary component of $\Sigma$). We can now glue a copy of $S^1\times D^2$ to each boundary component of $T_\phi$ so that $S^1\times \{pt\}$ is glued to $S^1\times \{t\}$ and $\{pt\}\times \partial D^2$ is glued to $\{\theta\}\times S^1$. See Figure~\ref{openbook}.
\begin{figure}[htb]
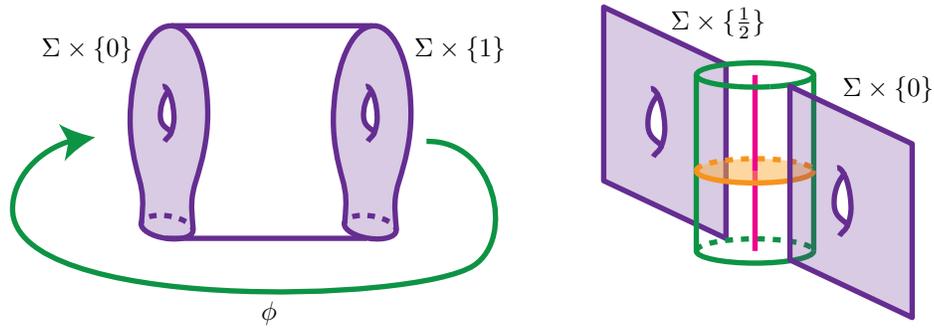

\begin{overpic}
{fig/Openbook}
\put(12,100){$\Sigma\times\{0\}$}
\put(153,100){$\Sigma\times\{1\}$}
\put(95,0){$\phi$}
\put(250,110){$\Sigma\times\{\frac 12\}$}
\put(315,85){$\Sigma\times\{0\}$}
\end{overpic}
\caption{Constructing an open book decomposition. On the left we see a the mapping torus $\Sigma\times[0,1]$ with $\Sigma\times\{1\}$ glued to $\Sigma\times\{0\}$ by $\phi$. On the right we see the solid torus (green) glued to the mapping cylinder. The red circle is the binding (core of the solid torus).}
\label{openbook}
\end{figure}
This gives a closed 3--manifold $M_\phi$. We say that $(\Sigma,\phi)$ is an \dfn{open book decomposition} for $M$ if $M_\phi$ is diffeomorphic to $M$ (technically this diffeomorphism should be part of the structure but is usually left implicit in discussions of open books). We call $\Sigma$ a \dfn{page} of the open book and $\phi$ the \dfn{monodromy}.

Notice that the cores of added solid tori form a link $B$ in $M_\phi\cong M$ and the complement of $B$ is diffeomorphic to the interior of $T_\phi$ so there is a fibration $\phi:(M-B)\to S^1$ whose fibers are the interior of Seifert surfaces for $B$. The link $B$ is called the \dfn{binding} of the open book and the closures of the fibers of $\pi$ are called the \dfn{pages}. The pair $(B,\pi)$ can easily be seen to determine $(\Sigma, \phi)$ ($\phi$ only up to conjugation) and is also called an open book decomposition for $M$. The two definitions of open book are often used interchangeably but one should note they are not quite the same. In any event, we will mainly consider open books in terms of pages and monodromies. 
\begin{example}\label{ex}
As a simple example we consider the open book with page a disk $D^2$ and monodromy $\phi$ the identity map. Clearly $T_\phi$ is simply $D^2\times S^1$ and so when a solid torus is glued to this as above we get $M_\phi$ diffeomorphic to $S^3$. Moreover the core of the added solid torus is $B$ the unknot in $S^3$ so the binding of this open book is the unknot. 
\end{example}

Generalizing the notion of braid we can consider an $n$--strand braid in $\Sigma\times [0,1]$. That is fix $n$ points on $\Sigma$, say $\{x_1,\ldots, x_n\}$. Then an $n$--strand braid in $\Sigma\times [0,1]$, or $n$--braid for short, is an isotopy class of embeddings of $n$ intervals $[0,1]$ into $\Sigma\times [0,1]$ that is transverse to each disk $\Sigma\times \{t\}$ and intersects $\Sigma\times\{0\}$ and $\Sigma\times \{1\}$ in the points $\{x_1,\ldots, x_n\}$. Just as for ordinary braids one can multiply two braids and see that we get a group $B(n,\Sigma)$. Moreover it is clear that one can take the closure of a braid $b$ in $B(n,\Sigma)$ to get a closed link $\overline{b}\in M_\phi$ for any $\phi$ that preserves the points $\{x_1,\ldots, x_n\}$ (and as we can assume these points are in a neighborhood of $\partial \Sigma$ where $\phi$ is the identity one can ignore this last condition). There is an obvious notion of positive and negative stabilization as in the standard braid group where an extra ``stand" is added (that is $x_{n+1}\times [0,1]$ where $x_{n+1}$ is a point close to $x_n$) and then the original braid is multiplied by the braid with a single positive or negative half-twist is added between the $x_n$ and $x_{n+1}$ strands. 
\begin{example}
Returning to the open book for $S^3$ in Example~\ref{ex} above we see that a braid in the just defined sense is exactly a braid in the original sense. That is $B(n,D^2)$ is exactly $B(n)$ and the just defined stabilization operations give ordinary braid stabilization. 
\end{example}
We now have the generalization of Alexander's and Markov's theorems. 
\begin{thm}[Skora 1992, \cite{Skora92}; Sundheim 1993, \cite{Sundheim93}]
Given an open book $(\Sigma,\phi)$ any link in $M_\phi$ can be represented as the closure of a braid of some index. Moreover if two braids represent the same link then they are related by (1) isotopies as closed braids (that is isotopies through links that are transverse to the pages of the open book and disjoint form the binding) and (2) positive and negative stabilization. 
\end{thm}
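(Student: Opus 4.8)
The plan is to prove both halves by directly generalizing Alexander's and Markov's classical arguments, replacing the braid axis (the $z$--axis, an unknot) by the binding $B$ and the angular fibration of the unknot complement by the page fibration $\pi\colon M_\phi\setminus B\to S^1$. The essential new ingredient, used throughout, is the local model of an open book near a binding component: a tubular neighborhood is $S^1\times D^2$ with binding core $S^1\times\{0\}$, the pages meeting it in the family $S^1\times\rho_\theta$ of ``half-disks'' (with $\rho_\theta$ the radial ray at angle $\theta$ in $D^2$), and $\pi$ restricting to the $D^2$--angular coordinate. In this model a link arc is braided precisely when its $D^2$--projection winds positively about $0$, and an arc winding the wrong way can be made to wind positively by pushing it across the core, i.e.\ across the binding. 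This is exactly Alexander's ``throw the strand over the axis'' trick, and here it is legitimate because $B$ is not part of the link and an ambient isotopy of $L$ in $M_\phi$ may cross $B$.

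For existence, first isotope $L$ to be disjoint from $B$ and transverse to all pages except at finitely many points, so that $\pi|_L$ is a Morse function on the (circle) components of $L$; being a closed braid is equivalent to $\pi|_L$ having no critical points. Each local maximum/minimum pair of $\pi|_L$ bounds an arc on which $L$ travels backward through the pages. I would eliminate these one at a time: finger the offending arc out to a boundary component of a page (possible since every page has boundary exactly on $B$), and then in the local model above push it across that binding component, reversing the sign of $d\pi$ along the arc. After finitely many such moves $\pi|_L$ is everywhere a submersion, so $L$ is a closed braid; this proves the generalized Alexander theorem.

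For uniqueness, take an ambient isotopy $H_t$ of $M_\phi$ carrying $\overline w=L_0$ to $\overline{w'}=L_1$, both braided. I would study the one--parameter family $f_t:=\pi|_{L_t}$ on $M_\phi\setminus B$ by Cerf theory: generically each $f_t$ is Morse, with finitely many exceptional times at which either a birth/death (cusp) of a canceling max--min pair occurs, two critical values coincide, or $L_t$ momentarily meets $B$. Between exceptional times the braiding procedure of the existence step produces a braid that varies by an isotopy of closed braids, which at the level of braid words is conjugation. At an exceptional time the analysis is again purely local near $B$: a birth/death of a max--min pair, or a transverse intersection with $B$, corresponds to the creation/cancellation of a single half--twisted strand over a binding component, which is exactly a positive or negative Markov stabilization/destabilization, the sign recorded by which side of the binding the strand is pushed across (possibly together with a closed--braid isotopy). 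Assembling the exceptional times in order exhibits $w$ and $w'$ as related by a sequence of conjugations and (de)stabilizations.

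The main obstacle is the bookkeeping in the uniqueness half: one must verify that the generic codimension--one degenerations of the family $f_t$ are exactly those listed, that each contributes only an allowed move, and in particular that the only genuinely new phenomenon beyond closed--braid isotopy is stabilization, carried out with the correct sign as the strand crosses a binding component. Keeping the entire degeneration analysis inside the binding--neighborhood local model is what makes the possibly complicated topology of the page $\Sigma$ irrelevant; making that localization airtight, and matching the Cerf--theoretic moves precisely to braid operations, is where the real work lies.
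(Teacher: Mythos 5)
The paper does not prove this theorem --- it is quoted from Skora and Sundheim without argument --- so there is no in-paper proof to compare against; I can only assess your outline on its own terms. Your overall strategy (Morse theory on $\pi|_L$ plus the ``throw the arc over the binding'' move for the Alexander half, and a generic one-parameter analysis for the Markov half) is the standard one and is close in spirit to Skora's actual argument. The existence half is essentially correct as sketched; the only points left implicit are that the rerouted arc must be kept inside a small neighborhood of the binding so that it remains embedded and disjoint from the rest of $L$, and that each such move strictly reduces the number of critical points of $\pi|_L$, so the induction terminates.

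The uniqueness half, however, has a genuine gap. You assert that ``between exceptional times the braiding procedure of the existence step produces a braid that varies by an isotopy of closed braids,'' but the braiding procedure is not canonical: it depends on the order in which the backward arcs are eliminated, on which binding component each arc is pushed across, and on the finger path used to reach it, and different choices generally produce braids of different indices winding around different binding components. What is missing is a ``uniqueness of braiding'' lemma --- that any two outputs of the algorithm applied to the same Morse position of $L$ are related by closed-braid isotopy and (de)stabilizations --- and this is not bookkeeping: it is essentially the content of the Markov half itself, and without it one cannot even consistently assign a braid to each non-exceptional time, so the Cerf-theoretic scaffolding has nothing to act on. A secondary imprecision: at a birth/death of a max--min pair the link does not cross $B$, so $\lk(L_t,B)$ is unchanged and it is the extra push across the binding demanded by the algorithm that raises the index by one, whereas at a transverse crossing with $B$ it is $\lk(L_t,B)$ itself that jumps; both events raise the output index by one, but identifying the resulting index-$(n+1)$ braid with a standard stabilization of the index-$n$ braid again requires the well-definedness lemma above. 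Your plan is the right skeleton, but the theorem's real content lives in exactly the step you defer.
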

Notice that original version of Markov's theorem two braids gave the same knot if they were related by conjugation and stabilization and in the above theorem they need to be related by ``braid isotopy" and stabilization. One may easily check that for braids in the open book $(D^2, id_{D^2})$ braid isotopy is exactly conjugation in the braid group. There is a similar statement for general open books but the conjugation is ``twisted" by the monodromy map. The reader is encouraged to work out explicitly what this means though it will not be relevant for what follows.

\subsection{Contact structures and the Giroux correspondence}\label{girouxcor}
In 1975 Thurston and Winkelnkemper, \cite{ThurstonWinkelnkemper75},  showed how to associate a contact structure $\xi_{\phi}$ on the manifold $M_\phi$ to an open book $(\Sigma,\phi)$. Later in 2002 Giroux \cite{Giroux02} made the following definition: a contact structure $\xi$ on $M$ is supported by the open book $(B,\pi)$ if there is a 1--form $\alpha$ with $\xi=\ker \alpha$, $\alpha$ is non-zero on $B$, $d\alpha$ is non-zero on the pages of the open books and the orientation induced on the pages by $d\alpha$ and on $B$ by $\alpha$ agree. Intuitively this means that the binding is transverse to the contact structure and the contact planes, away from the binding can be isotoped to be arbitrarily close to the tangent planes to the pages. Giroux then noted that with care the Thurston and Winkelnkemper construction gave a contact structure supported by the given open book and that there is a unique contact structure compatible with a given open book. Giroux then made the amazing discovery that every contact structure is supported by some open book! He actually showed more but first we need a definition.

Given an open book $(\Sigma,\phi)$ and a properly embedded arc $c$ in $\Sigma$ then the \dfn{positive stabilization} of $(\Sigma,\phi)$ is the open book $(\Sigma',\phi')$, with page $\Sigma'$ obtained from $\Sigma$ by attaching an (oriented) 1-handle along $\partial c$ and monodromy $\phi'=\tau_\gamma\circ \phi$ where $\tau_\gamma$ is a Dehn twist (see below) along $\gamma$ which is the closed curve formed as the union of $c$ and the core of the added 1-handle. One can similarly define a negative stabilization in the same way but using $\tau_\gamma^{-1}$ instead of $\tau_\gamma$. One can show that the manifolds $M_\phi$ and $M_{\phi'}$ associated to an open book and its stabilization (either positive or negative) are diffeomorphic. It is an interesting exercise to work out the definition of stabilization in terms of the binding and fibration: $(B,\pi)$. Giroux observed that the contact structure supported by an open book and its positive stabilization are isotopic. He moreover proved the following one-to-one correspondence. 
\begin{thm}[Giroux 2002, \cite{Giroux02}]
Let $M$ be a closed oriented manifold. Then there is a one-to-one correspondence between
\[
\{\text{oriented contact structures up to isotopy}\}
\]
\center{and} 
\[
\{\text{open book decompositions } (B,\pi) \text{ up to isotopy and positive stabilization}\}.
\]
\end{thm}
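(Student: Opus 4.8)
The plan is to split the asserted bijection into its two directions and verify each separately, taking for granted the two facts already established above: that the Thurston--Winkelnkemper construction assigns to each open book a contact structure, unique up to isotopy, supported by it, and that positive stabilization does not change this contact structure up to isotopy. Together these say that the assignment $(\Sigma,\phi)\mapsto \xi_\phi$ descends to a well-defined map $\Psi$ from the set of open books modulo isotopy and positive stabilization to the set of contact structures modulo isotopy. It then remains to prove that $\Psi$ is surjective (every contact structure is supported by some open book) and injective (any two open books supporting isotopic contact structures admit a common positive stabilization).

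For surjectivity I would construct an adapted open book directly from $\xi$ using convex surface theory. First choose a contact cell decomposition of $(M,\xi)$: a CW decomposition whose $1$--skeleton $G$ is a Legendrian graph, each $2$--cell has Legendrian boundary of contact framing $-1$ relative to the cell, and each $3$--cell is a Darboux ball. Let $R$ be a Legendrian ribbon of $G$, that is, a compact surface containing $G$ to which $\xi$ is tangent precisely along $G$; take the page to be $R$ and the binding to be $B=\partial R$. The complement of a neighborhood of $R$ is a union of balls, one for each $3$--cell, on each of which $\xi$ is tight and hence carries a product structure; these assemble the pages into a fibration $\pi$ of $M\setminus B$, yielding an open book $(B,\pi)$, and checking the defining conditions $\alpha>0$ on $B$ and $d\alpha>0$ on the pages shows that $\xi$ is supported by it. Alternatively one can derive surjectivity from the fact, recalled above, that every contact manifold arises from $(S^3,\xi_{std})$ by $\pm1$--contact surgery: $(S^3,\xi_{std})$ carries an explicit open book, any Legendrian surgery curve can be placed on a page after positively stabilizing, and $-1$--surgery is realized by composing the monodromy with a right-handed Dehn twist.

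Injectivity is where essentially all the difficulty lies. Given two open books $(\Sigma_0,\phi_0)$ and $(\Sigma_1,\phi_1)$ with isotopic supported contact structures, after an isotopy I may assume they support literally the same $\xi$, and the goal is to connect them through positive stabilizations. The strategy is to pass from open books back to the combinatorial data of contact cell decompositions (and the convex structures underlying them), and to show that any two such decompositions compatible with $\xi$ share a common refinement obtained by elementary moves, each of which translates back to a positive stabilization of open books. Concretely one establishes that (i) the open book produced from a given cell decomposition is well-defined up to positive stabilization, (ii) every compatible open book is a positive stabilization of one arising from some cell decomposition, and (iii) any two compatible cell decompositions admit a common refinement.

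The main obstacle is step (iii): controlling how two \emph{a priori} unrelated decompositions of the same contact manifold can be merged while keeping every intermediate modification a positive, never negative, stabilization. This requires the full force of Giroux's convex surface machinery and is the technical heart of the theorem; the existence half, by contrast, is comparatively soft.
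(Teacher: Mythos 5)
The paper does not prove this theorem; it is quoted from Giroux's 2002 ICM address as an external result, so there is no in-paper argument to compare against. Judged on its own terms, your outline is the standard (and correct) strategy: well-definedness of $(\Sigma,\phi)\mapsto\xi_\phi$ from Thurston--Winkelnkemper plus invariance under positive stabilization, surjectivity via a contact cell decomposition and the Legendrian ribbon of its $1$--skeleton, and uniqueness by passing back to cell decompositions and finding common refinements. The existence half as you sketch it is essentially complete modulo one point you gloss: it is not enough that the complement of a neighborhood of the ribbon $R$ is a union of Darboux balls --- one must show that this complement is itself diffeomorphic to (a neighborhood of) a ribbon for a dual structure, i.e.\ that it fibers as $\text{page}\times I$, which is where the framing condition on the $2$--cells is actually used. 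Your alternative route through contact $\pm1$--surgery presentations is legitimate but only yields existence, not the correspondence.

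The genuine gap is the injectivity half, and you have identified it yourself: steps (i)--(iii) are a table of contents, not an argument. Step (iii) --- that any two contact cell decompositions of $(M,\xi)$ admit a common refinement reachable by moves each of which translates into a positive (never negative) stabilization --- \emph{is} the theorem; asserting it does not discharge it. This is not a pedantic complaint: the uniqueness direction resisted a complete published write-up for well over a decade after Giroux announced it, precisely because controlling the sign of every intermediate stabilization requires delicate convex surface arguments. So your submission should be regarded as a correct proof of existence together with an accurate roadmap for uniqueness, not as a proof of the stated one-to-one correspondence. A smaller point: the theorem is stated for open books $(B,\pi)$ up to isotopy, while you work with abstract open books $(\Sigma,\phi)$; the translation (monodromy up to conjugation versus embedded binding-and-fibration up to ambient isotopy) is routine but should be mentioned, since the paper itself flags that the two notions ``are not quite the same.''
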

This amazing result has been a cornerstone of contact geometry ever sense and will feature prominently in our discussion of monoids in mapping class groups. 

Just as we did for ordinary braids we can associate transverse links to braids in open books. Specifically, given a braid $b$ in $B(n,\Sigma)$ we can consider its closure $\overline{b}$ in $M_\phi$ and note that it is transverse to the pages. Since, as discussed above, we may assume the contact planes are very close to the tangent planes to the pages we see that $\overline{b}$ is a transverse link. We say a transverse link coming from the closure of a braid is \dfn{braided about the open book}. We end this subsection by noting for future use that Bennequin, Orevkov-Shevchishin, and Wrinkles theorems have been generalized to transverse links in all open books.
\begin{thm}[Pavelescu, 2008 \cite{Pavelescu08}]
Let $\xi$ be a contact structure on a 3-manifold $M$ that is supported by an open book $(B,\pi)$. 
\begin{enumerate}
\item Any transverse knot $T$ in $(M,\xi)$ can be braided about $(B,\pi)$.
\item Two braids are isotopic through transverse knots if and only if they are related by braid isotopy and positive Markov moves. 
\end{enumerate}
\end{thm}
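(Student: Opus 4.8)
The plan is to reduce the statement to the already-established theorems of Skora--Sundheim (topological Alexander/Markov for open books) and Bennequin/Orevkov--Shevchishin/Wrinkle (the transverse statements for $(D^2,\mathrm{id})$), by localizing the problem near the binding where the open book looks standard. First I would fix a contact form $\alpha$ supporting $(B,\pi)$, so that $d\alpha$ is an area form on each page and $\alpha>0$ on $B$; the key geometric input, already noted in the excerpt, is that away from $B$ the contact planes $\xi=\ker\alpha$ can be $C^0$-approximated by the tangent planes to the pages. Consequently a link that is positively transverse to every page is automatically transverse to $\xi$ once it is pushed slightly in the direction of increasing page-index, exactly as closed braids in $\R^3$ are transverse to $\xi_{std}$ when their $r$-coordinate is large.

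For part (1), I would start from an arbitrary transverse knot $T$ in $(M,\xi)$ and produce a transverse isotopy to a braided representative. The first step is to make $T$ transverse to the pages \emph{topologically}: apply the Skora--Sundheim braiding procedure, which isotopes $T$ (topologically) to the closure of a braid $b\in B(n,\Sigma)$. The main work is then to upgrade this topological isotopy to a transverse one. Here I would localize: near $B$ the open book has a standard model $S^1\times D^2$ in which the pages are the half-disks $\{\theta=c\}$ and $\xi$ agrees with the standard contact structure, so the classical Bennequin braiding argument applies verbatim. Away from $B$, I would use that $T$ and each page-transverse arc both project to the $S^1$-direction $\pi$ monotonically, and exploit the $C^0$-closeness of $\xi$ to the pages to interpolate: any topological isotopy that keeps a knot transverse to the pages can be $C^0$-perturbed to a transverse isotopy, because transversality to $\xi$ is an open condition that is implied by (sufficiently monotone) transversality to the fibers of $\pi$.

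For part (2), given two braids with transversely isotopic closures, I would first forget the contact structure and apply the topological open-book Markov theorem of Skora--Sundheim to conclude the two braids are related by braid isotopy together with \emph{positive and negative} stabilizations. The content of the theorem is to eliminate the negative stabilizations, and I would prove this exactly as in the Orevkov--Shevchishin/Wrinkle argument. The cleanest route is to run their proof in the localized standard neighborhood of the binding, where a stabilization is supported, and to observe that the self-linking computation of Lemma~\ref{braidsl} has a local analog: a negative stabilization lowers the self-linking number while a positive stabilization raises it, so a negative stabilization cannot be undone by a transverse isotopy followed only by positive moves. Converting this obstruction into a full normal-form statement requires the band-and-finger / foliation techniques of Orevkov--Shevchishin (or, if one prefers the symplectic-field-theory flavor, a Bennequin-type inequality), carried out fiberwise over $S^1$.

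The hard part is this last step: globalizing the removal of negative stabilizations from the local model near $B$ to the entire manifold $M$, since the monodromy $\phi$ can be arbitrarily complicated and the ``braid isotopy'' is twisted by $\phi$ as remarked after the Skora--Sundheim theorem. Controlling how the characteristic foliation (or the pseudoholomorphic curves) behave as one moves across the pages, and ensuring the destabilizing isotopies can be chosen to respect the page structure globally rather than just near the binding, is where the essential difficulty lies; the local standard model handles the geometry only in a neighborhood of $B$, and patching these local moves into a coherent global transverse isotopy is the crux of Pavelescu's argument.
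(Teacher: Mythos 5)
The paper does not actually prove this theorem; it is quoted verbatim with a citation to Pavelescu's work, so there is no in-paper argument to compare yours against. Judged on its own merits, your sketch of part (1) is essentially the standard argument and is fine in outline: braid topologically, then use the local model near the binding together with the $C^0$-closeness of $\xi$ to the pages away from the binding to keep the isotopy transverse. (One caution: the contact planes are \emph{not} close to the tangent planes of the pages near $B$ --- they are transverse to $B$ there --- so the ``openness of transversality'' argument genuinely needs the standard neighborhood model of the binding rather than the approximation statement; you gesture at this but should not present the approximation as holding globally.)

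Part (2), however, contains a real gap. Your strategy is to first invoke the topological Skora--Sundheim theorem to relate the two braids by isotopy together with positive \emph{and} negative stabilizations, and then to ``eliminate'' the negative ones. This reduction does not work: the topological theorem hands you one particular sequence of moves, and there is no mechanism in your write-up for converting that sequence into a different one using only positive moves. The self-linking observation you offer (a negative stabilization lowers $sl$, a positive one raises it) only shows that a single negative stabilization changes the transverse isotopy class --- i.e.\ it proves a necessary condition --- and says nothing about whether two transversely isotopic closed braids are joined by positive moves alone, which is the entire content of the theorem. The genuine proofs (Orevkov--Shevchishin, Wrinkle, and Pavelescu's generalization) do not post-process the topological Markov sequence; they analyze the trace of the transverse isotopy itself --- the annulus or ``movie'' swept out in $M\times[0,1]$, decomposed into elementary events relative to the pages --- and show directly that each event can be realized by braid isotopy and positive (de)stabilization. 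You correctly identify that the globalization across pages with nontrivial monodromy is ``the crux,'' but that crux is precisely the theorem, and the localization-near-the-binding framework you set up does not supply it, since the stabilizations are local near $B$ while the obstruction lives in how the isotopy crosses the pages globally.
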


\subsection{Mapping class groups}
We denote a surface of genus $g$ with $k$ boundary components and $n$ marked points by $S_n^{g,k}$. We will frequently leave out superscripts or subscripts if they are clear from context or unimportant. The \dfn{mapping class group} of $S_n^{g,k}$ is the group of diffeomorphisms of $S_n^{g,k}$ preserve the marked points set-wise modulo isotopies through such maps. We denote this group
\[
Mod(S_n^{g,k}).
\]
If the diffeomorphisms are the identity on the boundary of $S_n^{g,k}$ the the group of isotopy classes is denoted
\[
Mod(S_n^{g,k}, \partial S_n^{g,k}).
\]
We will discuss the mapping class group more in the following sections.

\section{Monoids via generating sets}\label{monoidsgs}
Given any group $G$ and a subset of that group $A$ that generates $G$ (that is every element of $G$ can be written as a product if the elements of $A$ and their inverses) then we can get a sub-monoid of $G$ by  
\[
M(A)=\{\text{all words in non-negative powers of the elements of } A\}.
\]
In the following two subsections we consider monoids in the mapping class group defined using generating sets and see how they are related to interesting geometric and topological properties.
\subsection{Dehn twists}\label{dt}
Given an oriented surface $S$, any embedded closed curve $\gamma$ in $S$ has a neighborhood diffeomorphic to $S^1\times [0,1]$. The (positive) Dehn twist along $\gamma$ is the a diffeomorphism 
\[
\tau_\gamma:S\to S
\]
that is the identity map outside the neighborhood of $\gamma$ and equal to the map $(\theta,t)\to (\theta - 2\pi f(t), t)$ for points in the neighborhood $S^1\times [0,1]$, where $f:[0,1]\to [0,1]$ is a non-decreasing function that is identically 0 near 0 and identically 1 near 1. See Figure~\ref{posdehntwist}.
\begin{figure}[htb]
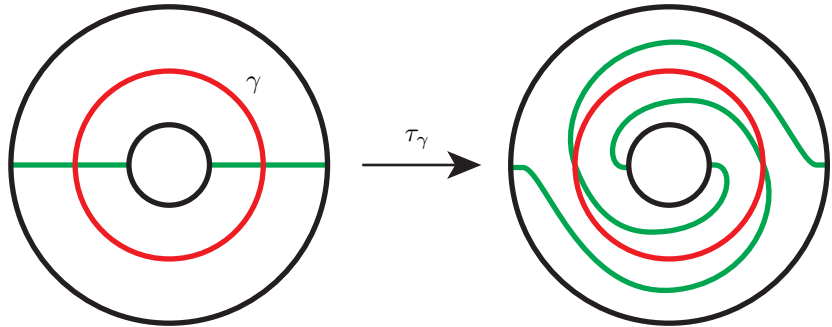

\begin{overpic}
{fig/DehnTwist}
\put(150, 70){$\tau_\gamma$}
\put(90,90){$\gamma$}
\end{overpic}
\caption{A positive Dehn twist about $\gamma$.}
\label{posdehntwist}
\end{figure}
This map is clearly smooth and it is a standard fact that the isotopy class of $\tau_\gamma$ is independent the specific choice of neighborhood of $\gamma$ and choice of $f$, and only depends on $\gamma$ up to isotopy. 
\begin{thm}[Dehn, 1920's \cite{Dehn87}; Lickorish, 1964 \cite{Lickorish64}]
The set of positive Dehn twists along curves in $S$ generate the mapping class group $Mod(S)$. 
\end{thm}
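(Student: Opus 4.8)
The plan is to let $H \leq Mod(S)$ be the subgroup generated by all positive Dehn twists and to prove $H = Mod(S)$ by induction on the complexity of the compact orientable surface $S$, measured by $-\chi(S) = 2g - 2 + k$. Since $H$ is a \emph{group}, it automatically contains every $\tau_\gamma^{-1}$ as well, so it suffices to express an arbitrary $f \in Mod(S)$ as a word in Dehn twists and their inverses. Two elementary facts drive the argument. The first is the conjugation identity $\phi\,\tau_\gamma\,\phi^{-1} = \tau_{\phi(\gamma)}$, which shows that the $Mod(S)$-action on isotopy classes of simple closed curves permutes the set of Dehn twists; this is the ``change of coordinates'' mechanism that lets geometry on curves be converted into algebra in $H$. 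The second is that cutting $S$ along a nonseparating curve strictly decreases $-\chi$, which powers the induction.

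The first main step is to show that $H$ acts transitively on isotopy classes of nonseparating simple closed curves. The key local move is that if $a$ and $b$ are simple closed curves with geometric intersection number $i(a,b) = 1$, then a product of the two corresponding twists carries one to the other; concretely $\tau_a\tau_b(a) = b$ up to isotopy, as one checks in the once-punctured-torus neighborhood of $a \cup b$ by the homological action $x \mapsto x + \langle x, v\rangle\, v$ of a twist. Any two nonseparating curves can be joined by a finite chain of nonseparating curves in which consecutive members meet exactly once, and iterating the local move along such a chain yields an element of $H$ taking one endpoint to the other. Granting transitivity, given $f \in Mod(S)$ we fix a nonseparating curve $\gamma$, choose $h \in H$ with $h(f(\gamma)) = \gamma$, and replace $f$ by $hf$; thus we may assume from the start that $f$ fixes $[\gamma]$.

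With $f$ fixing $[\gamma]$, I would realize $f$ by a homeomorphism fixing $\gamma$ setwise and cut along $\gamma$ to obtain a surface $S_\gamma$ of strictly smaller complexity carrying two new boundary circles $\gamma_1, \gamma_2$. If $f$ preserves the two sides of $\gamma$ it descends to a mapping class of $S_\gamma$, to which the inductive hypothesis applies, writing it as a product of twists about curves in $S_\gamma$; each such curve is the image of a curve in $S$ disjoint from $\gamma$, and the corresponding twists glue back to Dehn twists on $S$, recovering $f$ as a product of twists together with possibly a power of $\tau_\gamma$ absorbing the boundary-framing discrepancy between $Mod(S_\gamma,\partial)$ and $Mod(S)$. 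If instead $f$ interchanges the sides of $\gamma$, I would precompose with a further element of $H$ that swaps $\gamma_1, \gamma_2$ so as to reduce to the previous case. The induction bottoms out at small complexity: the disk and sphere have trivial mapping class group, the annulus rel boundary contributes only the boundary-parallel twist, and the torus gives $Mod(T^2) \cong SL(2,\Z)$, which is generated by $\begin{pmatrix} 1 & 1 \\ 0 & 1 \end{pmatrix}$ and $\begin{pmatrix} 1 & 0 \\ 1 & 1 \end{pmatrix}$, realized by the twists about the two standard curves.

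I expect the genuine obstacle to be the cutting step rather than the transitivity step. One must choose the representative homeomorphism of $f$ compatibly with $\gamma$, control the induced behavior on the two new boundary circles (both the possible permutation of $\gamma_1,\gamma_2$ and the boundary-twist ambiguity inherent in passing to $Mod(S_\gamma,\partial)$), and verify carefully that twists on $S_\gamma$ lift to genuine twists on $S$ with only $\tau_\gamma$ itself appearing as the discrepancy. Phrasing the inductive hypothesis strongly enough to survive these boundary bookkeeping issues, and establishing the $SL(2,\Z)$ base case explicitly, is where the real work lies; by contrast, once the single-intersection formula $\tau_a\tau_b(a)=b$ is in hand, the transitivity step and the reduction to a $\gamma$-fixing map are conceptually routine.
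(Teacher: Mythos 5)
The paper does not prove this statement --- it is quoted as a classical result of Dehn and Lickorish --- so your proposal can only be measured against the standard argument (essentially the one in Farb--Margalit), which is indeed the strategy you are reconstructing. Your outline has the right shape (transitivity of the twist group on nonseparating curves via the relation $\tau_a\tau_b(a)=b$ when $i(a,b)=1$, then reduction to the stabilizer of a curve and a cut-and-induct step), but there are two genuine gaps. First, your induction is not well-founded as stated: cutting $S$ along a circle does not change the Euler characteristic, since $\chi(S^1)=0$. If $S$ has genus $g$ and $k$ boundary components, then $S_\gamma$ has genus $g-1$ and $k+2$ boundary components, and $-\chi(S_\gamma)=2(g-1)-2+(k+2)=-\chi(S)$. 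The quantity that decreases is the genus, not $-\chi$, so you need a double induction: cut along a nonseparating curve to lower genus, and then separately lower the number of boundary components by capping a boundary circle with a disk. That second reduction is where the Birman exact sequence enters, and its kernel (generated by disk-pushing maps along loops, together with the boundary twist) must itself be shown to lie in the twist group; the key identity here is that the push along a simple loop equals $\tau_a\tau_b^{-1}$ for the two boundary curves of an annular neighborhood of the loop. None of this appears in your sketch, and it is not mere bookkeeping --- it is the mechanism by which the induction actually terminates.

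Second, the cutting step is simply unavailable for planar surfaces: a sphere with $k\geq 3$ boundary components contains no nonseparating simple closed curve, so "fix a nonseparating curve $\gamma$" has no meaning there, and these surfaces are not among your base cases (disk, sphere, annulus, torus). The genus-zero case must be handled entirely by the capping/Birman mechanism described above (or by an explicit identification with a quotient of the braid group), and since the genus induction necessarily passes through surfaces with many boundary components, you cannot avoid it even if your ultimate interest is in closed surfaces. The remaining issues you flag yourself --- the kernel of the cutting homomorphism $Mod(S,\vec{\gamma})\to Mod(S_\gamma)$, the side-swapping case, and the boundary-twist ambiguity --- are real but are the expected bookkeeping; the two points above are where the proposed argument, as written, actually fails.
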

We denote by $Dehn^+(S)$ the monoid in $Mod(S, \partial S)$ generated by positive Dehn twist about curves in $S$. Using relations in the mapping class group, see for example \cite{FarbMargalit12}, it is easy to see the following.
\begin{thm}
If $S$ is a compact surface without boundary, then $Mod(S)=Dehn^+(S)$. 
\end{thm}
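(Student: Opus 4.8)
The plan is to combine the Dehn--Lickorish theorem with a single well-chosen relation in $Mod(S)$ that expresses the inverse of a Dehn twist as a product of \emph{positive} Dehn twists; this positivity is exactly what fails when $\partial S\neq\emptyset$, where a boundary-parallel twist is central and can never be cancelled. By Dehn--Lickorish, $Mod(S)$ is generated by positive Dehn twists, and one may take the generating twists to be about \emph{nonseparating} curves (as in Lickorish's or Humphries' generating sets). Hence every element of $Mod(S)$ is a product of terms $\tau_\gamma^{\pm 1}$ with $\gamma$ nonseparating, and it suffices to prove that $\tau_\gamma^{-1}\in Dehn^+(S)$ for every nonseparating simple closed curve $\gamma$.

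The key input is the chain relation. Assume $S$ is closed of genus $g\geq 1$ (the genus $0$ case is trivial since $Mod(S^2)$ is trivial, and every Dehn twist on $S^2$ is already isotopic to the identity). Choose a chain of simple closed curves $c_1,\ldots,c_{2g}$, meaning consecutive curves meet once and nonconsecutive curves are disjoint. A regular neighborhood of $c_1\cup\cdots\cup c_{2g}$ is a genus-$g$ surface with a single boundary curve $\delta$; since the chain has length $2g$ it fills $S$, so the complement of this neighborhood is a disk and $\delta$ bounds a disk in $S$. Consequently $\tau_\delta=1$, and the even chain relation gives
\[
w^{4g+2}=\tau_\delta=1,\qquad w:=\tau_{c_1}\tau_{c_2}\cdots\tau_{c_{2g}}.
\]

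From this single relation I read off the inverses directly. Writing $w=\tau_{c_1}v$ with $v:=\tau_{c_2}\cdots\tau_{c_{2g}}$, one checks $\tau_{c_1}\bigl(v\,w^{4g+1}\bigr)=w\cdot w^{4g+1}=w^{4g+2}=1$, so
\[
\tau_{c_1}^{-1}=v\,w^{4g+1}=(\tau_{c_2}\cdots\tau_{c_{2g}})\,(\tau_{c_1}\cdots\tau_{c_{2g}})^{4g+1},
\]
which is a product of positive Dehn twists. Applying the same computation to the cyclic rotations of $w$ (each conjugate to $w$, hence also of order dividing $4g+2$) shows $\tau_{c_i}^{-1}\in Dehn^+(S)$ for every $i$. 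Finally, by the change of coordinates principle the mapping class group acts transitively on nonseparating simple closed curves, so any such $\gamma$ can be realized as the first curve $c_1$ of a filling chain; the displayed relation holds verbatim for the image chain, yielding $\tau_\gamma^{-1}\in Dehn^+(S)$. Together with the reduction in the first paragraph this proves $Mod(S)=Dehn^+(S)$. The main obstacle is establishing the chain relation together with the observation that a length-$2g$ chain fills the closed surface (so that $\tau_\delta=1$); once this relation is in hand, everything else is elementary group manipulation and the change of coordinates principle.
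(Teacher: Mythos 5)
Your proof is correct and is exactly the kind of argument the paper gestures at when it says the result follows ``using relations in the mapping class group'': the even chain relation $(\tau_{c_1}\cdots\tau_{c_{2g}})^{4g+2}=\tau_\delta=1$ for a filling chain of length $2g$ in the closed surface expresses each $\tau_{c_i}^{-1}$ as a positive word, and Dehn--Lickorish together with the change of coordinates principle does the rest. The paper itself supplies no details, so your write-up simply fills in the proof it leaves to the reference.
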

However if $S$ is a surface with boundary $Dehn^+(S)$ is {\bf not} a group and is a proper sub-monoid of $Mod(S, \partial S)$. We will discuss this monoid further in a subsequent subsection. We point out that this is a non-trivial fact that can easily be seen from contact geometry using the Giroux correspondence. (There are other proofs of this as well, for example one can use orderings as discussed in Section~\ref{ordermonoid}, and the contact geometric proof of this is essentially a repackaging of the ordering proof.)

\subsection{Half twists along arcs}\label{ht} 
Suppose that $S_n$ is an oriented surface with $n$ marked points $\{x_1,\ldots x_n\}$. Recall a diffeomorphism of $S_n$ is allowed to permute the punctures. Consider an arc $\alpha$ whose interior is embedded in $S_n$ in the complement of the marked points and who end points map to distinct marked points. There is a neighborhood of $\alpha$ in $S$ that contains only the marked points $\partial \alpha$ and is orientation preserving diffeomorphic to the disk disk of radius 2 about the origin in $\R^2$ by a diffeomorphism taking $\alpha$ to $\{(x,y): x=0 \text{ and } |y|\leq 1\}$. Let $f:[0, 2]\to [0,1]$ be a function that is equal to 0 for near $r= 0$, equal to $1$ near $r=2$, equal to $1/2$ for $r=1$ and is non-decreasing. We can now define a diffeomorphism $h_\alpha:S_n\to S_n$ by a $(r,\theta)\mapsto (r, \theta- f(r)\pi)$ on the disk of radius 2 (using polar coordinates) and equal to the identity outside of the neighborhood of $\alpha$. This diffeomorphism exchanges the two marked points involved. We call $h_\alpha$ a {\em (positive) half-twist along $\alpha$}.

It is well known that $Mod(S_n,\partial S_n)$ is generated by positive Dehn twists about embedded closed curves and positive half-twists along arcs. We explore the special case when the surface $D_n$ is a disk with $n$ marked points. In this case $Mod(D_n,\partial D_n)$ is better known as the braid group $B(n)$. 
To see this notice that given an element $[\phi]\in Mod(D_n, \partial D_n)$ if we think of $\phi$ as a diffeomorphism of $D^2$ then it is isotopic to the identity (since all diffeomorphisms of $D^2$ that fix the boundary are). Let $\Phi:D^2\times [0,1]\to D^2$ be this isotopy. 
Then the trace of the isotopy is an $n$--braid in $D\times [0,1]$:
\[
\text{image}\left\{\Phi:\left(\{x_1,\ldots, x_n\}\times [0,1]\right)\to D\times [0,1]\right\}.
\]
See Figure~\ref{twisttogen}. Using the notation in Section~\ref{braids} we can choose arcs $\alpha_i$ on the $y$-axis connecting $x_i$ to $x_{i+1}$ in $D^2$. Notice that the positive half-twist along $\alpha_i$ corresponds to the braid $\sigma_i$. 
\begin{figure}[htb]
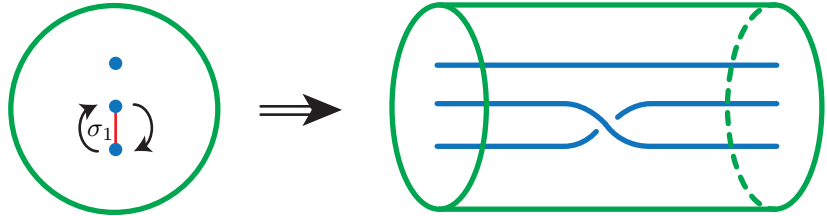

\begin{overpic}
{fig/TwistToGen}
\put(46,30){$\sigma_1$}

\end{overpic}
\caption{On the left is a positive half twist about the red arc. On the right is the corresponding braid $\sigma_1$.}
\label{twisttogen}
\end{figure}

We will use various generating sets to discuss three notations of positivity in the braid group. 

\subsubsection{Standard generators}
First consider the standard generators $\sigma_1,\ldots, \sigma_{n-1}$ for the braid group $B(n)$ given in Section~\ref{braids}. We call a braid \dfn{positive} if it can be written as a positive word in the $\sigma_i$. Thus we get the \dfn{positive monoid} $P(n)$ in $B(n)$. 

Before moving on we make a few quick observations about closures of positive braids. First notice that one can easily build a Seifert surface for a knot given as the closure of a braid word. Specifically given a braid word $w=\sigma^{\epsilon_1}_{i_1}\ldots \sigma^{\epsilon_k}_{i_k}$, where each $\epsilon_i$ is either $1$ or $-1$, in the braid group $B(n)$ we can take $n$ parallel disks $D_1,\ldots, D_n$ such that each has constant $z$-coordinate and the $z$-coordinate is increasing with the index on $D_i$. See Figure~\ref{BuildSS}. 
\begin{figure}[htb]
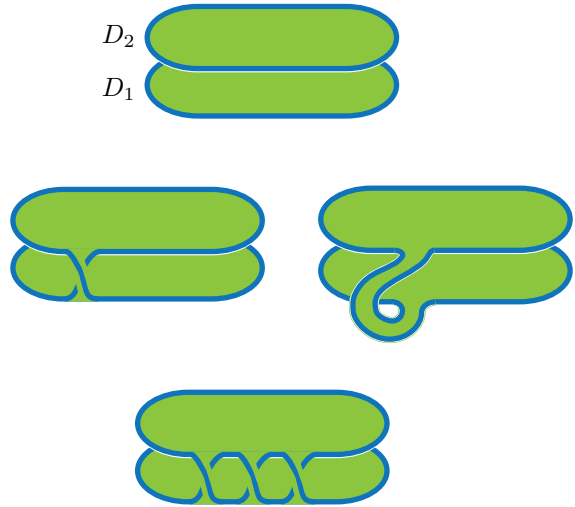

\begin{overpic}
{fig/BuildSS}
\put(35,155){$D_1$}
\put(35,175){$D_2$}
\end{overpic}
\caption{On the top are two disks $D_1$ and $D_2$ whose boundary is the closure of the trivial 2--braid. In the middle, left a half twisted band is added between the two disks. The resulting surface has boundary the closure of the braid $\sigma_1$. In the middle, right is an alternate view of the half twisted band that will be used later. On the bottom is a surface with boundary the closure of the braid $\sigma_1\sigma_1\sigma_1$.}
\label{BuildSS}
\end{figure}
Notice that the boundary of these disks gives the closure of the trivial $n$--braid. Now for each letter in word $w$ we add a half twisted band connecting the corresponding disks. Again, see Figure~\ref{BuildSS}. The resulting surface $\Sigma_w$ clearly has boundary the closure of $w$. 
\begin{prop}\label{postive-min-genus}
If $w$ is a positive braid word then the Seifert surface $\Sigma_w$ constructed above is a minimal genus Seifert surface for the closure of $w$. 
\end{prop}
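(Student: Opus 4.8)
The plan is to play the self-linking number of the transverse knot $\overline{w}$ against the Bennequin inequality, exploiting the fact that positivity forces the self-linking number to be as large as possible. First I would compute the two numbers that drive the argument. On one hand, since $w$ is a positive braid word of length $k$ in $B(n)$, Lemma~\ref{braidsl} gives $sl(\overline{w}) = \text{writhe}(w) - n = k - n$, because every exponent $\epsilon_i$ equals $1$ and so the writhe is exactly the number $k$ of letters. On the other hand, the surface $\Sigma_w$ is assembled from $n$ disks and $k$ half-twisted bands; each disk contributes $1$ and each band is a $1$-handle contributing $-1$ to the Euler characteristic, so $\chi(\Sigma_w) = n - k$.

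Next I would invoke the Bennequin inequality (Theorem~\ref{bennequin}): viewing $\overline{w}$ as a transverse knot in $(\R^3, \xi_{std})$ as explained before Lemma~\ref{braidsl}, every Seifert surface $\Sigma$ for $\overline{w}$ satisfies $sl(\overline{w}) \le -\chi(\Sigma)$, equivalently $\chi(\Sigma) \le n - k$. Thus $n - k$ is an upper bound for the Euler characteristic of any Seifert surface of $\overline{w}$, and the surface $\Sigma_w$ realizes this bound exactly. Finally, since a Seifert surface of a fixed link has a fixed number of boundary components (one per component of $\overline{w}$), maximizing the Euler characteristic is the same as minimizing the genus; hence $\Sigma_w$ is a minimal genus Seifert surface.

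The individual steps are routine once Bennequin's inequality is granted, so the only point that needs care is the passage from ``maximal Euler characteristic'' to ``minimal genus.'' This requires that the competing surfaces be connected with the same number of boundary components as $\Sigma_w$. When $\overline{w}$ is a knot this is immediate; more generally I would observe that $\Sigma_w$ is connected precisely when the band pattern links all the disks (for instance whenever each generator $\sigma_1, \dots, \sigma_{n-1}$ occurs), and that in any case the Euler-characteristic bound coming from Bennequin is the clean invariant to compare, so the conclusion follows from $\chi(\Sigma_w) = n - k = -sl(\overline{w})$ being extremal. The main obstacle is therefore not the topology but correctly importing the contact-geometric input, namely that the transverse braid closure genuinely realizes equality in the Bennequin bound; this is exactly what the self-linking computation of Lemma~\ref{braidsl} guarantees.
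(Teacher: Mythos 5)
Your proposal is correct and follows essentially the same route as the paper: compute $\chi(\Sigma_w)=n-k$ from the handle decomposition, compute $sl(\overline{w})=k-n$ via Lemma~\ref{braidsl}, and conclude minimality from the Bennequin inequality of Theorem~\ref{bennequin}. Your extra remark about passing from maximal Euler characteristic to minimal genus (connectedness and boundary components) is a point the paper glosses over by writing $2g(\Sigma)-1=-\chi(\Sigma)$, so it is a welcome clarification but not a different argument.
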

\begin{proof}
There are many ways to prove this but we give a proof that illustrates the usefulness of the Bennequin inequality. Notice that the surface $\Sigma$ has Euler characteristic $\chi(\Sigma_w)=n-k$ since we used $n$ disks (that is $n$, 0--handles) and $k$ bands (that is $k$, 1--handles) to build $\Sigma_w$. (Recall $k$ is the length of the braid word $w$.) Thus the genus of $\Sigma_w$ is $g(\Sigma_w)=\frac 12(k-n+1)$.

From Lemma~\ref{braidsl} we see that the closure of $w$ is a transverse link with self-linking number $sl(\overline{w})=k-n$ and hence we know by the Bennequin inequality from Theorem~\ref{bennequin} that any surface $\Sigma$ with $\partial  \Sigma=\overline{w}$ satisfies $2g(\Sigma)-1= -\chi(\Sigma)\geq sl(\overline{w})=k-n$. Thus we see that $\Sigma_w$ has minimal genus among all surfaces with boundary $\overline{w}$.
\end{proof}

We also notice another nice geometric fact about closures of positive braids.
\begin{thm}
Let $K$ be the closure of a non-split, positive braid. Then $K$ is a fibered link. 
\end{thm}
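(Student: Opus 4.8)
The plan is to show that the Seifert surface $\Sigma_w$ constructed just above, for a positive word $w=\sigma_{i_1}\cdots\sigma_{i_k}$, is in fact a \emph{fiber surface}: that the complement of $K=\overline{w}$ fibers over $S^1$ with pages isotopic to $\Sigma_w$. Recall $\Sigma_w$ is assembled from $n$ parallel disks $D_1,\ldots,D_n$ together with one positively half-twisted band for each letter of $w$. The non-split hypothesis is exactly what guarantees that every generator $\sigma_i$, for $i=1,\ldots,n-1$, occurs in $w$: if some $\sigma_i$ were absent, then $D_1,\ldots,D_i$ would never be banded to $D_{i+1},\ldots,D_n$ and $K$ would be split. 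Hence the incidence graph whose vertices are the disks and whose edges are the bands is connected, so $\Sigma_w$ is connected. This connectivity is genuinely needed, since the page of an open book is connected.

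First I would recognize $\Sigma_w$ as a plumbing (Murasugi sum) of positive Hopf bands. The model fact is that the positive Hopf band, the annulus with a single positive full twist whose boundary is the positive Hopf link $\overline{\sigma_1^2}$, is itself a fiber surface, with monodromy a single positive Dehn twist about its core; this is classical. I would then build $\Sigma_w$ up by successively plumbing on one such Hopf band for each band of the construction, reading the plumbing regions off the \emph{brick diagram} of $w$: bands sharing a disk are glued along a standard plumbing square.

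The second ingredient is Stallings' theorem \cite{Stallings78} that a Murasugi sum of two fiber surfaces is again a fiber surface (later vastly generalized by Gabai \cite{Gabai83}). Applying this at each stage of the Hopf-band decomposition shows that $\Sigma_w$ is a fiber surface, so $K$ is fibered.

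The hardest part will be the explicit verification of the plumbing decomposition, namely pinning down which sub-annuli of $\Sigma_w$ are the Hopf-band summands and checking that consecutive bands meet along honest Murasugi plumbing squares, so that Stallings' theorem genuinely applies at every step. I would also highlight a consequence that ties directly back to the theme of this paper: since the monodromy of a plumbing is the product of the monodromies of the summands, the monodromy of $\Sigma_w$ is a product of positive Dehn twists, one per letter of $w$, and so lies in the positive Dehn twist monoid $Dehn^+(\Sigma_w)$.
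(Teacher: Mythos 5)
Your strategy---exhibit $\Sigma_w$ as an iterated plumbing of positive Hopf bands and invoke Stallings' theorem that a Murasugi sum of fiber surfaces is a fiber surface---is exactly the argument behind the paper's attribution to \cite{Stallings78}; the paper offers no proof beyond that citation and the remark that the result also follows from Gabai's fiberedness criteria \cite{Gabai86}, so you are on the intended route rather than a genuinely different one, and your use of the non-split hypothesis (every generator must occur, so $\Sigma_w$ is connected) is the right one. The one concrete error is the bookkeeping of the plumbing: you cannot plumb on ``one Hopf band for each band of the construction.'' A Hopf band is an annulus, so each plumbing drops the Euler characteristic by exactly $1$; starting from a disk, $N$ plumbings yield $\chi=1-N$, whereas $\chi(\Sigma_w)=n-k$, so the decomposition must involve $k-n+1$ Hopf bands, not $k$. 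The correct scheme is: in the brick diagram, each \emph{consecutive pair} of bands in the same column (two occurrences of $\sigma_i$ with no other $\sigma_i$ between them, cyclically) spans one positive Hopf band, giving $\sum_i(m_i-1)=k-(n-1)$ summands where $m_i$ is the number of occurrences of $\sigma_i$, while a remaining spanning tree of $n-1$ bands merely joins the $n$ disks into a single base disk (this is also where you handle a generator occurring exactly once, by Markov destabilization or by absorbing that band into the base disk). The same correction applies to your closing remark: the monodromy is a product of $k-n+1$ positive Dehn twists, one per Hopf band, not one per letter---for instance the closure of $\sigma_1\in B(2)$ is the unknot, whose fiber is a disk with trivial monodromy. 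With the count fixed, the verification you flag as the hardest part goes through and the proof is complete.
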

This theorem was originally due to Stallings in 1978 \cite{Stallings78} but can easily be seen using Gabai's criteria for knots being fibered \cite{Gabai86}.

\subsubsection{Quasi-positive generators}
The above is the standard notion of positivity for braids and we see that closures of positive braids have some nice properties, but one might ask why choose the above generating set to define positivity? Is there a more ``natural" one? that maybe says more about properties of ``positive" braids defined in term of these generators? 

Thinking of the braid group as a mapping class group and using the notation from the beginning of the section we can take the set $A_{qp}$ of half twists $h_\alpha$ about all arcs $\alpha$ in $D$ that intersect the $x_i$ only it its end points, see for example Figure~\ref{PQgen}. Clearly $A_{qp}$ generates the braid group $B(n)$. We let $QP(n)$ be the monoid generated by $A_{qp}$ and call any braid in $QP(n)$ a \dfn{quasi-positive braid}. Moreover any link that is the closure of a quasi-positive braid will be called a quasi-positive link. 

\begin{example} 
\begin{figure}[htb]
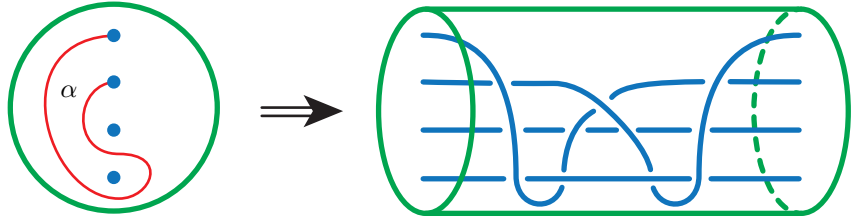

\begin{overpic}
{fig/QPgen}
\put(20,45){$\alpha$}
\end{overpic}
\caption{An arc $\alpha$ and the corresponding braid.}
\label{PQgen}
\end{figure}
  
Notice that $h_\alpha$ for $\alpha$ as in Figure~\ref{PQgen} can be written 
\begin{align*}
h_\alpha&= (\sigma_3\sigma_2\sigma_1\sigma_1\sigma_2^{-1})\sigma_3(\sigma_2\sigma_1^{-1}\sigma_1^{-1}\sigma_2^{-1}\sigma_3^{-1})\\
&= w\sigma_3 w^{-1}
\end{align*}
where $w=\sigma_3\sigma_2\sigma_1\sigma_1\sigma_2^{-1}$. So we see that $h_\alpha$ is a conjugate of the standard generator $\sigma_3$.
\end{example}
One may easily show that for any arc $\alpha$ as above the element $h_\alpha$ can be written as a conjugate of a generator $\sigma_i$. 
\begin{remark}
Notice that a braid is quasi-positive if the braid can be written as a word in the conjugates of the ``standard generators" $\sigma_i$. Or said another way $QP(n)$ is normally generated by the generators $\{\sigma_1,\ldots, \sigma_n\}$.
\end{remark}

We claim these generators are a more natural generating set since we are not preselecting a specific set of arcs as we did for the $\sigma_i$ but taking all arcs to define the generators. Of course one draw back to this generating set is that it is not finite. So $QP(n)$ is not finitely generated but it is finitely normally generated. 

Another argument that quasi-positivity is a more natural notion that positivity is that it has a surprising geometric interpretation. Specifically we will think of $S^3$ as a sphere of some radius in complex 2-space $\C^2$. Let $\Sigma$ be a complex curve in $\C^2$. If $\Sigma$ intersects $S^3$ transversely then we call $K=\Sigma\cap S^3$ a \dfn{transverse $\C$-link}, see Figure~\ref{complex}.

\begin{figure}[htb]
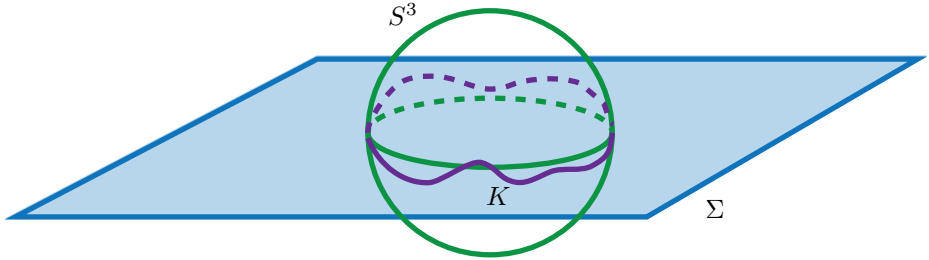

\begin{overpic}
{fig/complex}
\put(265,15){$\Sigma$}
\put(182,20){$K$}
\put(145,88){$S^3$}
\end{overpic}
\caption{The complex curve $\Sigma$ is represented by the horizontal blue rectangle and intersects $S^3$ transversely in the link $K$.}
\label{complex}
\end{figure}

We note that the class of transverse $\C$-links includes links of singularities, like the torus knots, but is a much bigger class of knots.  

We have the following amazing theorem.
\begin{thm}[Rudolph, 1983 \cite{Rudolph83}; Boileau and Orevkov, 2001 \cite{BoileauOrevkov01}]
The set of transverse $\C$-links in $S^3$ agrees with the set of quasi-positive links in $S^3$. 
\end{thm}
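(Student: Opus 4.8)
The plan is to prove the two inclusions separately, since they correspond to the two cited papers and have entirely different characters. The forward inclusion (quasi-positive $\Rightarrow$ transverse $\C$-link) is Rudolph's, and the reverse (transverse $\C$-link $\Rightarrow$ quasi-positive) is Boileau--Orevkov's.

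First I would show every quasi-positive link is a transverse $\C$-link. Let $w=\prod_{j=1}^k \bigl(w_j\,\sigma_{i_j}\,w_j^{-1}\bigr)$ be a quasi-positive $n$--braid. The idea is to realize its closure as the boundary of a piece of complex curve inside $B^4\subset\C^2$. Using the projection $\C^2\to\C$, $(z,w)\mapsto z$, I would build a \dfn{braided surface}: over the base disk one begins with $n$ trivial sheets (pieces of the complex lines $\{w=\text{const}\}$) and then attaches, for each factor $w_j\sigma_{i_j}w_j^{-1}$, a single positively half-twisted band. The key local observation is that a positive half-twisted band is modeled on the complex curve $\{(z,w):w^2=z\}$ near a simple branch point of the two-fold cover, and hence is genuinely complex analytic. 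Assembling these holomorphic pieces yields a complex curve $\Sigma$ meeting $S^3$ transversally with $\Sigma\cap S^3=\overline{w}$; since, as noted in the text, a closed braid in $\R^3\subset S^3$ is automatically transverse to $\xi_{std}$, this exhibits $\overline{w}$ as a transverse $\C$-link.

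The harder inclusion is the converse. Here I start with $K=\Sigma\cap S^3$ and set $F=\Sigma\cap B^4$, a complex curve with $\partial F=K$. Using Bennequin's transverse braiding theorem I may first isotope $K$, through transverse knots, to a closed braid about the standard open book of $S^3$. The goal is then to put the filling $F$ into \dfn{braided position} with respect to the disk fibration of $B^4$ induced by $\C^2\to\C$, that is, to present $F$ as a braided surface whose boundary braid is a product of conjugates of the $\sigma_i$. The essential input is \emph{positivity of intersections} of complex (more generally pseudoholomorphic) curves: $F$ meets the complex disk fibers positively, so every branch point of the projection $F\to D^2$ contributes a positive band, and no negative bands can occur. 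The resulting braid word is therefore a product of conjugates of generators, i.e. quasi-positive.

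The main obstacle lies entirely in this second inclusion, and specifically in the braiding step. A complex curve need not be transverse to the chosen disk fibers, and a naive perturbation could either destroy holomorphicity or, fatally, create the negative intersections that would spoil quasi-positivity. Controlling this is the technical heart of Boileau--Orevkov's argument, which is why it demands the machinery of pseudoholomorphic curves rather than the elementary band-assembly used in Rudolph's direction: one perturbs $F$ to a fiber-transverse (pseudo)holomorphic surface while invoking positivity of intersections at every stage to guarantee that only positive bands appear. As a final consistency check, the self-linking computation of Lemma~\ref{braidsl} together with the Bennequin bound of Theorem~\ref{bennequin} confirm that the Euler characteristic of the braided filling agrees with the band count of a quasi-positive presentation.
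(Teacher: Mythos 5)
The paper does not prove this theorem: it only states it and attributes the inclusion (quasi-positive $\Rightarrow$ transverse $\C$-link) to Rudolph and the converse to Boileau--Orevkov, exactly the decomposition you use. Your sketches of the two directions --- holomorphically modeled positive bands assembled into a braided surface for Rudolph's inclusion, and positivity of intersections of (pseudo)holomorphic curves to rule out negative bands for Boileau--Orevkov's --- accurately summarize the arguments of the cited references, so your outline is consistent with the paper's treatment.
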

So quasi-positivity has a geometric meaning! The fact that quasi-positive links are transverse $\C$-links was shown by Rudolph and the fact that transverse $\C$-links are quasi-positive was shown by Boileau and Orevkov.

Orevkov has a method of using quasi-positive knots to study Hilbert's $16^\text{th}$ problem about the possible configurations of real algebraic planar curves, see \cite{Orevkov99}.

In particular as part of his program to study Hilbert's $16^\text{th}$ problem Orevkov asked \cite{Orevkov00} the following two questions. 
\begin{question}\label{orevkov1}
Given two quasi-positive braids representing the same fixed link, are they related by {\em positive} Markov moves and conjugation?
\end{question}

\begin{question}\label{orevkov2}
Given a quasi-positive link, is any braid representing the link with minimal braid index quasi-positive?
\end{question}

The answer to the first question is now known to be NO, but it is YES for a subset of quasi-positive links. Moreover a partial positive answer to the second question can also be given. All these results involve contact geometry and will be giving in Section~\ref{contact2studymonoid}.  We also note that Orevkov proved the following result.
\begin{thm}[Orevkov, 2000 \cite{Orevkov00}]\label{qpandstab}
A braid $B$ is quasi-positive if and only if any positive Markov stabilization of $B$ is quasi-positive. 
\end{thm}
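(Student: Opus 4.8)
The plan is to prove the two implications separately. Since quasi-positivity is invariant under conjugation (conjugating a product of conjugates of generators by a fixed braid again yields a product of conjugates of generators), and any two positive Markov stabilizations of $B$ are conjugate in $B(n+1)$, it suffices to work with the standard stabilization $\iota(B)\sigma_n$, where $\iota\colon B(n)\hookrightarrow B(n+1)$ adds a trivial $(n+1)$st strand. I will write $\beta = \iota(B)\sigma_n$ throughout.

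\emph{Forward direction (easy).} Suppose $B\in B(n)$ is quasi-positive, say $B=\prod_{j=1}^{e} w_j\sigma_{k_j}w_j^{-1}$ with each $k_j\le n-1$. Applying $\iota$, each factor maps to a conjugate of a generator in $B(n+1)$, so $\iota(B)$ is quasi-positive there; appending the single extra factor $\sigma_n$, which is itself a generator, exhibits $\beta=\iota(B)\sigma_n$ as a product of $e+1$ conjugates of generators. Hence every positive stabilization of $B$ is quasi-positive. This direction uses nothing beyond the definition.

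\emph{Backward direction (the crux).} Here I assume $\beta$ is quasi-positive in $B(n+1)$ and must produce a quasi-positive expression for $B$ on $n$ strands. First I would record the numerical constraints, which turn out to be automatically consistent. Each conjugate $w\sigma_i w^{-1}$ has exponent sum $+1$, so any quasi-positive word has exactly as many factors as its exponent sum; thus a quasi-positive word for $\beta$ has exactly $e(\beta)=e(B)+1$ factors, and a putative quasi-positive word for $B$ would have exactly $e(B)$ factors. Likewise, since positive stabilization is a transverse isotopy (Theorem~\ref{posstab}), $sl(\overline{\beta})=sl(\overline{B})$, which by Lemma~\ref{braidsl} is consistent with this count. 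These constraints show that nothing is ``lost'' in passing between $B$ and $\beta$, which is what makes a direct reduction plausible. The substance is then to take a quasi-positive word $\beta=h_{\gamma_1}\cdots h_{\gamma_{e(B)+1}}$ (half-twists along arcs $\gamma_j$ in the $(n+1)$-punctured disk, in the mapping-class picture of Section~\ref{ht}) and to remove the last puncture: I would attempt to rewrite this word, using the braid relations, so that all arcs $\gamma_j$ and all conjugating factors avoid the $(n+1)$st puncture except for a single factor equal to the final $\sigma_n$, which can then be cancelled against the stabilization to leave a quasi-positive word in $B(n)$ representing $B$.

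The main obstacle is precisely this last step. A priori the conjugating elements in a quasi-positive expression for $\beta$ can move the last strand in an essential way, so ``forgetting'' the $(n+1)$st puncture is not a homomorphism and need not carry quasi-positive words to quasi-positive words. Controlling these occurrences, i.e. showing that one can always normalize a quasi-positive expression so that the extra strand is used only trivially, is the heart of the matter, and is exactly where Orevkov's finer machinery enters (manipulation of band-generator/Garside normal forms, together with his quasi-positivity tests via unitary representations of the braid group). I expect that normalization to be the genuinely hard part; the contact-geometric inputs above only pin down the bookkeeping, namely the exponent sum and the self-linking number, and guarantee consistency rather than carrying out the reduction itself.
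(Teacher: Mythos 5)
The paper does not prove this statement; it is quoted from Orevkov with a citation, so there is no internal proof to compare against. Judged on its own terms, your proposal establishes only the trivial direction. The forward implication (quasi-positive $\Rightarrow$ every positive stabilization quasi-positive) is correct and complete as you write it, and your bookkeeping for the converse is also correct: since exponent sum is a homomorphism to $\Z$ and each factor $w\sigma_iw^{-1}$ contributes $+1$, any quasi-positive expression for $\beta=\iota(B)\sigma_n$ has exactly $e(B)+1$ factors, and the self-linking numbers match by Lemma~\ref{braidsl}. But as you yourself concede, these are consistency checks, not an argument.

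The genuine gap is the entire backward direction. Your plan is to take an arbitrary quasi-positive word for $\beta$ in $B(n+1)$ and ``normalize'' it so that the $(n+1)$st puncture is touched only by a single terminal factor $\sigma_n$, which can then be cancelled. You correctly observe that deleting the last strand is not a homomorphism and need not preserve quasi-positivity of words, and then you state that you expect the normalization to be the hard part and that Orevkov's machinery enters there. That expectation is accurate, but it means the proposal contains no proof of the only nontrivial implication: the existence of such a normalization is essentially equivalent to the theorem itself, and nothing in your write-up constrains how the conjugating words $w_j$ interact with the last strand. The numerical constraints you derive cannot supply this, since they are blind to which strands the bands involve (for instance, they cannot rule out a quasi-positive expression for $\beta$ in which several factors braid the last strand essentially). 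To close the gap you would need the actual content of Orevkov's argument --- his analysis of quasi-positive factorizations under destabilization --- or an independent argument (e.g.\ via the characterization of quasi-positive links as transverse $\C$-links together with a genus/self-linking count), neither of which is sketched here.
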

Notice that a positive answer to Question~\ref{orevkov1} together with this theorem would say that questions about quasi-positive links could be answered purely in the terms of the quasi-positive monoids $QP(n)$. 

We now consider surfaces with boundary a quasi-positive link. Recall our construction of a Seifert surface $\Sigma_w$ for the closure of a braid $w$ above. We can try the same thing except the core of our attached bands will be given by the arcs $\alpha$ defining the generators of the quasi-positive monoid.
\begin{example}
Consider the 4-braid $w$ from Figure~\ref{PQgen}. To construct an immersed surface with boundary $\overline{w}$ we take 4 disks as shown in the middle of Figure~\ref{qusurface} and then attache a 1-handle with core given by $\alpha$. One easily sees that the resulting surface the desired boundary. 
\begin{figure}[htb]
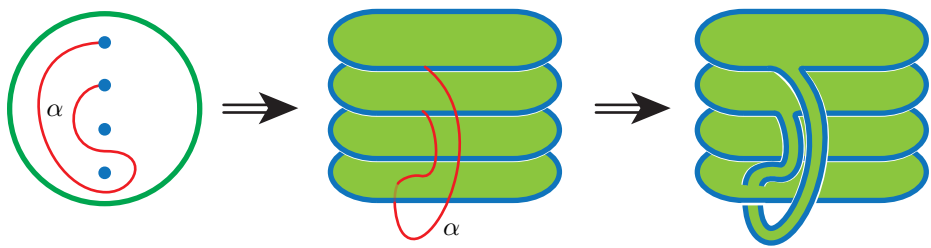

\begin{overpic}
{fig/qpsurface}
\put(15,50){$\alpha$}
\put(165,5){$\alpha$}
\end{overpic}
\caption{The arc $\alpha$ that defines the diffeomorphism $h_\alpha$ shown on the left. The surface $\widetilde{\Sigma}_{\alpha}$ shown on the right.}
\label{qusurface}
\end{figure}
\end{example}
More generally given a word $w$ in the quasi positive generators we get an surface $\widetilde{\Sigma}_w$ for the closure of $w$. Notice that $w$ can also be represented as a word $w'$ in the standard generators of the braid group and from there we get a surface $\Sigma_{w'}$. Clearly the length of the word $w'$ is longer than (or equal to) the length of $w$ and so $\widetilde{\Sigma}_w$ will have genus less than or equal to the genus of $\Sigma_{w'}$. The main draw back to $\widetilde{\Sigma}_w$ is that it is {\em not embedded}. But notice that it only has ribbon singularities and no clasp singularities. (Recall that any immersed surface with embedded boundary in a 3-manifolds will have double points that are either of clasp or ribbon type. See Figure~\ref{rc}.) 
\begin{figure}[htb]
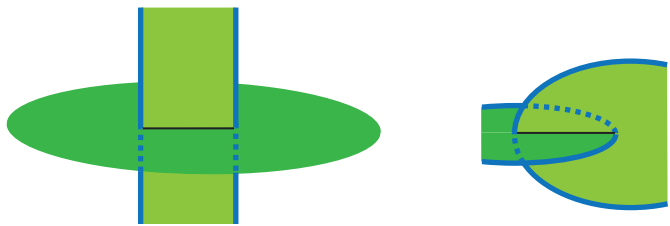

\begin{overpic}
{fig/ribbonclasp}
\end{overpic}
\caption{A ribbon singularity shown on the left and a clasp singularity shown on the right.}
\label{rc}
\end{figure}
Notice that if we think of $\widetilde{\Sigma}_w$ as sitting in $S^3=\partial B^4$ then a piece of one of the sheets involved in the ribbon singularity can be pushed into the interior of the 4-ball so that $\widetilde{\Sigma}_w$ can be slightly perturbed, relative to the boundary, to be embedded in $B^4$. We now recall that following work of Rudolph \cite{Rudolph95} concerning slice knots, Lisca and Mati\'c and, independently,  Akbulut and Matveyev proved the ``slice Bennequin bound".
\begin{thm}[Lisca and Mati\'c, 1996 \cite{LiscaMatic96};  Akbulut and Matveyev, 1997 \cite{AkbulutMatveyev97}]
If $T$ is any knot transverse to the standard contact structure $\xi_{std}$ on $S^3$ then 
\[
sl(T)\leq -\chi(\Sigma),
\]
where $\Sigma$ is any smoothly embedded surface in $B^4$ with boundary $T$. 
\end{thm}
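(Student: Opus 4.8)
The plan is to reduce the inequality to a Seiberg--Witten adjunction inequality inside a closed K\"ahler surface, which is exactly the scheme used by Lisca--Mati\'c and Akbulut--Matveyev. First I would pass from the transverse knot to a Legendrian one. Every transverse $T$ is the transverse pushoff of some Legendrian knot $L$ (push $T$ into $\xi_{std}$ and round off), and one has the standard relation $sl(T)=tb(L)-r(L)$. Since $L$ is topologically isotopic to $T$, the \emph{same} embedded surface $\Sigma\subset B^4$ with $\partial\Sigma=T$ serves as a slice surface for $L$. Thus it suffices to prove the Legendrian form $tb(L)+|r(L)|\le -\chi(\Sigma)$, from which $sl(T)=tb(L)-r(L)\le tb(L)+|r(L)|\le-\chi(\Sigma)$ follows immediately. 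We may also assume $\Sigma$ is connected, since discarding closed components only decreases $\chi$.

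Next I would turn the Legendrian knot into a $4$--dimensional handle attachment. Perform Legendrian surgery on $L$, i.e.\ attach a Weinstein $2$--handle along $L$ with framing $tb(L)-1$, producing a cobordism $W$ built from $B^4$ by a single $2$--handle. Because Legendrian surgery preserves Stein fillability (as recorded earlier in the paper), $W$ is a Stein domain. I would then invoke the embedding theorem of Lisca--Mati\'c: every Stein domain embeds holomorphically in a closed \emph{minimal} K\"ahler surface $X$ of general type, in particular one with $b_2^+(X)>1$ and with canonical class $K_X$ a Seiberg--Witten basic class. Capping the pushed-in slice surface $\Sigma\subset B^4\subset W\subset X$ with the core disk $D$ of the attached handle ($\partial D=L$) produces a closed, oriented, embedded surface $\widehat\Sigma=\Sigma\cup_L D\subset X$.

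Now the bookkeeping. Capping one boundary component with a disk gives $g(\widehat\Sigma)=g(\Sigma)$, so $2g(\widehat\Sigma)-2=-\chi(\Sigma)-1$. The self-intersection is the difference between the handle framing and the Seifert framing induced by $\Sigma$, namely $[\widehat\Sigma]^2=tb(L)-1$, while the pairing with the canonical class computes the rotation number, $\langle K_X,[\widehat\Sigma]\rangle=\pm r(L)$. Feeding these into the Seiberg--Witten adjunction inequality
\[
2g(\widehat\Sigma)-2\ \ge\ [\widehat\Sigma]^2+\bigl|\langle K_X,[\widehat\Sigma]\rangle\bigr|
\]
yields $-\chi(\Sigma)-1\ge (tb(L)-1)+|r(L)|$, that is $tb(L)+|r(L)|\le -\chi(\Sigma)$, which is what we wanted.

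The two genuinely deep inputs are the obstacles: (1) the Lisca--Mati\'c holomorphic embedding of an arbitrary Stein domain into a closed minimal K\"ahler surface of general type, whose construction requires careful handle-by-handle control of the complex and symplectic data; and (2) the adjunction inequality itself, the gauge-theoretic heart of the argument, which rests on the nonvanishing of the Seiberg--Witten invariant of $X$ at $K_X$ (in the spirit of Kronheimer--Mrowka's resolution of the Thom conjecture). Everything else is the computation of framings, Euler characteristics, and intersection numbers above. The one technical wrinkle I would have to handle carefully is that the adjunction inequality is usually stated for surfaces of positive genus with $[\widehat\Sigma]^2\ge 0$; when $tb(L)-1<0$ one must first raise the framing---for instance by blowing up $X$ or taking connected sums and tracking the induced changes in $[\widehat\Sigma]^2$ and $\langle K_X,[\widehat\Sigma]\rangle$---so that the inequality applies uniformly, and verifying that this adjustment does not weaken the final bound is the fiddly part of the proof.
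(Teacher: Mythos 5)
The paper itself does not prove this theorem --- it is quoted from Lisca--Mati\'c and Akbulut--Matveyev with the remark that the proof ``involves gauge theory'' --- and your argument is precisely the argument of those references: Legendrian approximation of $T$, a Weinstein $2$--handle attachment along $L$ with framing $tb(L)-1$, the Lisca--Mati\'c embedding of the resulting Stein domain into a closed minimal K\"ahler surface of general type, and the Seiberg--Witten adjunction inequality applied to the slice surface capped off by the core of the handle. Your bookkeeping ($[\widehat\Sigma]^2=tb(L)-1$, $\langle K_X,[\widehat\Sigma]\rangle=\pm r(L)$, $2g(\widehat\Sigma)-2=-\chi(\Sigma)-1$) is correct, and the two technical points you flag --- the genus-zero case and surfaces of negative square, where the 1996--97 form of the adjunction inequality does not apply directly --- are exactly the points the original papers must, and do, treat separately.
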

Thus we see that the self-linking number of a transverse knot $T$ also bounds the 4--ball genus (a.k.a. the slice genus), $g_4(T)$, of the knot. The proof of this theorem is much more difficult than the proof of Bennequin's original theorem and involves gauge theory. Mirroring the proof of Proposition~\ref{postive-min-genus} one can easily see the surface $\widetilde{\Sigma}_w$ constructed for a quasi-positive braid minimizes the 4--ball genus. 
\begin{prop}
If $w$ is a quasi-positive braid word then the surface $\widetilde\Sigma_w$ constructed above is a minimal genus surface in $B^4$ with boundary the closure of $w$. That is $g_4(\overline{w})=g(\widetilde\Sigma_w)$.
\end{prop}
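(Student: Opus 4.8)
The plan is to mirror the proof of Proposition~\ref{postive-min-genus}, replacing the Bennequin inequality (Theorem~\ref{bennequin}) with the slice Bennequin bound. First I would record the topology of $\widetilde{\Sigma}_w$. If $w$ is a word of length $k$ in the quasi-positive generators $h_\alpha$, then $\widetilde{\Sigma}_w$ is assembled from $n$ disks (the $0$--handles) and $k$ half-twisted bands (the $1$--handles, one for each letter of $w$), exactly as in the positive case, so $\chi(\widetilde{\Sigma}_w)=n-k$ and $g(\widetilde{\Sigma}_w)=\tfrac12(k-n+1)$. As noted in the text, $\widetilde{\Sigma}_w$ has only ribbon singularities, so pushing the offending sheets into the interior of $B^4$ perturbs it, rel boundary, to an \emph{embedded} surface in $B^4$ with the same underlying abstract surface and hence the same Euler characteristic.

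The second step is to compute the self-linking number of $\overline{w}$. The point is that the exponent sum $e\colon B(n)\to\Z$ is a group homomorphism (it is the abelianization map to $\Z$), so it is invariant under conjugation. Each quasi-positive generator $h_\alpha$ is a conjugate of some standard generator $\sigma_i$, and therefore has exponent sum $e(h_\alpha)=e(\sigma_i)=1$. Consequently a length-$k$ quasi-positive word $w$ has $e(w)=k$, i.e.\ $\mathrm{writhe}(w)=k$, and Lemma~\ref{braidsl} gives $sl(\overline{w})=\mathrm{writhe}(w)-n=k-n$.

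Finally I would combine these. By the slice Bennequin bound, any smoothly embedded surface $\Sigma$ in $B^4$ with $\partial\Sigma=\overline{w}$ satisfies $-\chi(\Sigma)\ge sl(\overline{w})=k-n$. Since the (perturbed) $\widetilde{\Sigma}_w$ realizes $-\chi(\widetilde{\Sigma}_w)=k-n$, it attains this lower bound, so it maximizes $\chi$ and hence minimizes genus among all such surfaces; thus $g_4(\overline{w})=g(\widetilde{\Sigma}_w)$.

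The step I expect to be the main obstacle is not any of the above bookkeeping --- which is a direct analogue of Proposition~\ref{postive-min-genus} --- but rather the care needed in the surface's construction: verifying that the ribbon pushes genuinely produce an embedded surface in $B^4$ without changing $\chi$, and checking that $\widetilde{\Sigma}_w$ is connected with a single boundary component (so that comparing genus, rather than merely $\chi$, is meaningful and so that the knot form of the slice Bennequin bound applies). Of course, the genuinely deep input, the slice Bennequin bound with its gauge-theoretic proof, is used here as a black box.
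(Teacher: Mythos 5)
Your proposal is correct and is exactly the argument the paper intends: the paper states this proposition with the remark that it follows by ``mirroring the proof of Proposition~\ref{postive-min-genus}'' with the slice Bennequin bound in place of the Bennequin inequality, which is precisely what you do. Your extra observations --- that the exponent sum is conjugation-invariant so each quasi-positive generator contributes $+1$ to the writhe, and that the ribbon pushes into $B^4$ preserve the abstract surface and hence $\chi$ --- are the right details to supply and are consistent with the paper's (omitted) proof.
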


\subsubsection{Strongly quasi-positive generators}
Our last class of ``positive braids" are the so called \dfn{strongly quasi-positive} braids. For these we take as a generating set for $B(n)$ the braids 
\[
\sigma_{ij}= (\sigma_i\ldots \sigma_{j-2})\sigma_{j-1}(\sigma_i\ldots \sigma_{j-2})^{-1},
\]
for $1\leq i<j<n$.  Or in terms of the mapping class group model of $B(n)$ the $\sigma_{ij}$ are just half twists along arcs that have non-positive $x$-coordinate, see Figure~\ref{SPQgen}. 
\begin{figure}[htb]
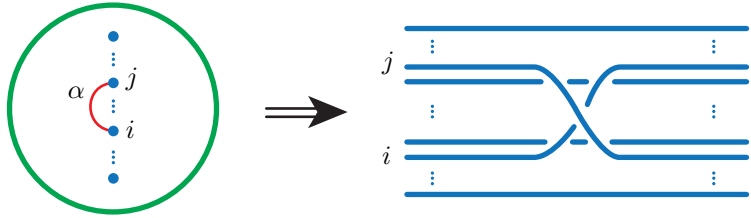

\begin{overpic}
{fig/SQPgen}
\put(23,44){$\alpha$}
\put(45,29){$i$}
\put(142,19){$i$}
\put(45,49){$j$}
\put(142,55){$j$}
\end{overpic}
\caption{An arc $\alpha$ and the corresponding braid $\sigma_{ij}$.}
\label{SPQgen}
\end{figure}
We call a braid \dfn{strongly quasi-positive} if it can be written as a positive word in the generators $\sigma_{ij}$. The monoid of strongly quasi-positive braids will be written $SQP(n)$. 

Notice that if $w\in SQP(n)$ then the surface $\widetilde{\Sigma}_w$ constructed in the last subsection is embedded in $S^3$ and so is a Seifert surface for the closure $\overline{w}$ of $w$. More to the point notice that the slice Bennequin inequality gives 
\[
\frac{\sl(\overline{w})+1}{2}\leq g_4(\overline{w})\leq g_3(\overline{w})
\]
and by construction $\frac{\sl(\overline{w})+1}{2}=g(\widetilde{\Sigma}_w)$. So for strongly quasi-positive knots the genus and 4-ball genus agree. In fact one can show that a quasi-positive knot is strongly quasi-positive if and only if its genus and 4-genus agree. One may easily use this to find examples of quasi-positive knots that are not strongly quasi-positive. Moreover since there are strongly quasi-positive knots that are not fibered it is clear that they do not have to be positive braids. 

\subsubsection{Summary}
We see that by choosing various generating sets for the braid group $B(n)$ we get a sequence of monoids with the following strict inclusions:
\[
P(n)\subset SQP(n)\subset QP(n) \subset B(n). 
\]
Moreover each of these monoids is associated to interesting geometric properties of the knots coming from the closures such braids. Namely:
\begin{enumerate}
\item The closure of a quasi-positive braid bounds a complex surface in the 4--ball and the natural ribbon surface built from the quasi-positive braid is a surface of minimal genus in the 4--ball with boundary the closed braid.
\item The closure of a strongly quasi-positive braid also bounds a complex surface in the 4-ball, its 4-ball genus and Seifert genus are equal, and the natural surface built form the strongly quasi-positive braid is a surface of minimal genus in $S^3$ with boundary the closed braid.
\item The closure of a positive braid is a fibered knot in $S^3$ with minimal genus Seifert surface coming form the braid presentation. 
\end{enumerate}
Later, in Section~\ref{contact2studymonoid}, we will see that one may use contact geometry to say something about these monoids. In particular, one can address Questions~\ref{orevkov1} and~\ref{orevkov2}.

\section{Monoids via orderings}\label{ordermonoid}

In this section we explore the use of (left-invariant) orderings and quasi-morphisms to construct monoids in groups. 

\subsection{Left-invariant orderings}
Recall that a \dfn{strict linear ordering} on a set $S$ is a relation $<$ such that 
\begin{enumerate}
\item the property $<$ is transitive and 
\item for each $a, b\in S$ we have exactly one of the following being true, $a<b, b<a$ or $b=a$.  
\end{enumerate}
We say such and ordering on a group $G$ is a \dfn{left-invariant ordering} if $g<h$ implies $kg<kh$ for any $g,h,k$ in $G$.

Now given a left-invariant ordering $<$ on a group $G$ we get a monoid 
\[
M_<=\{g\in G: e<g \, \, \text{ or }\, \,  g=e\},
\]
where $e$ is the identity element in $G$. Clearly if $g$ and $h$ are in $M_<$ then $g<gh$ by the left-invariant property and $1<g<gh$ by the transitivity property. Thus $M_<$ is closed under multiplication and clearly contains $e$. 

It turns out that the mapping class group of a surface with boundary (and possibly with marked points) has a left-invariant ordering. To see this, we discuss how to compare two arcs on a surface with the same end point. Let $c_1$ and $c_2$ be two arcs embedded in an oriented surface $S$ so that they share an end point $x\in \partial S$. The boundary of these arcs needs to be contained in the union of $\partial S$ and the marked points. We say that \dfn{$c_1$ is to the right of $c_2$ at $x$} if after isotoping $c_1$ and $c_2$, relative to their end points, so that they intersect minimally and orienting them away from $x$ we have that the oriented tangent to $c_1$ followed by the oriented tangent to $c_2$ gives an oriented basis for $T_xS$. See Figure~\ref{arcorder}. If $c_1$ is to the right of $c_2$ at $x$ then we denote this by $c_2<_x c_1$. 
\begin{figure}[htb]
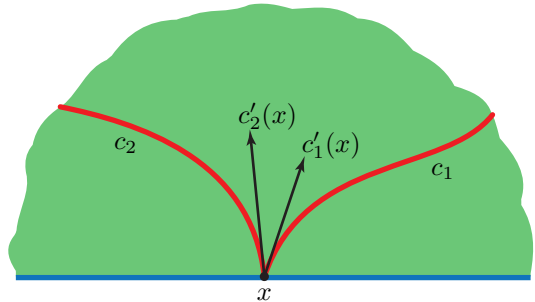

\begin{overpic}
{fig/arcorder}
\put(160,40){$c_1$}
\put(40,50){$c_2$}
\put(111,49){$c_1'(x)$}
\put(87,60){$c_2'(x)$}
\put(94,-7){$x$}
\end{overpic}
\caption{The arcs $c_1$ and $c_2$ sharing the endpoint $x$ in $\partial S$. If the surface has the ``counterclockwise" orientation then $c_2$ is less than $c_1$ at $x$, that is $c_1$ is to the right of $c_2$ at $x$.}
\label{arcorder}
\end{figure}

We now define a left invariant ordering on the braid group though of as the mapping class group $Mod(D_n,\partial D_n)$ that will be generalized to any (non-closed) surface later. Denoting the marked points in $D_n$ by $\{x_1,\ldots, x_n\}$ as in Section~\ref{braids}. Consider the arcs $c_1,\ldots c_n$ where $c_i$ has one end point on $x_i$, the other end point, which we denote $y_i$,  on $\partial D_n$ and has constant $y$-coordinate. See Figure~\ref{braidorder}. 
\begin{figure}[htb]
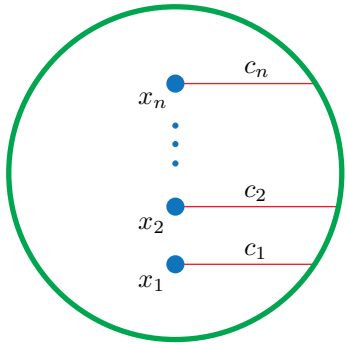

\begin{overpic}
{fig/braidorder}
\put(50,21){$x_1$}
\put(50,43){$x_2$}
\put(50,90){$x_n$}
\put(90,33){$c_1$}
\put(90,55){$c_2$}
\put(90,102){$c_n$}
\end{overpic}
\caption{The arcs $c_n$ used in the ordering of the braid group.}
\label{braidorder}
\end{figure}
We say that the braid $b_1\in Mod(D_n,\partial D_n)$ is greater than the braid $b_2$ if for the first $i$ such that $b_1(c_i)\not=b_2(c_i)$ we have $b_2(c_i)<_{y_i}b_1(c_i)$. Notice that $b_1$ and $b_2$ are isotopic if and only if the image of all the arcs $c_i$ under $b_1$ and $b_2$ are isotopic (rel end points and the isotopies of the arcs are not allowed to cross the marked points). Thus we see that $b_1<b_2$, $b_2<b_1$ or $b_1=b_2$. So we clearly have a total order on $Mod(D_n,\partial D_n)$ that is obviously left-invariant. Notice that one would get a different order on $Mod(D_n,\partial D_n)$ if the arcs were ordered differently or if a different set of $n$ arcs were chosen. The first left-invariant orderings on the braid group was defined by Dehornoy in 1982 using an algebraic approach. In \cite{FennGreeneRolfsenRourkeWiest99} it was shown that Dehornoy's order is essentially equivalent to the one defined above. (It takes a little work to see this and the authors thank Dan Margalit for helping to confirm this observation.)

Generalizing the above example we construct orderings on the mapping class group $Mod(S,\partial S)$ for surfaces with boundary by comparing arcs in a basis for $S$ with their image under a mapping class element. More specifically consider a surface $S$ of genus $g$ with $k$ boundary components and marked points $\{x_1,\ldots x_n\}$. Once can choose $2g+k-1$ properly embedded disjoint arcs $\{\gamma_1,\ldots, \gamma_{2g+k-1}\}$ that cut $S$ into a disk with $n$ marked points. Now choose $n$ further disjoint arcs $\{\alpha_1,\ldots, \alpha_n\}$ that are disjoint from the $\gamma_i$ and such that $\partial \alpha_j$ consists of one point on the boundary of $S$ and $x_j$. We call the union of these arcs a basis for $S$ and denote it by $\mathcal{A}$. Now chose an ordering $\mathcal{O}$ of the boundary points of these arcs that are contained in $\partial S$: $\{y_1,\ldots, y_{4g+2k-2+n}\}$. See Figure~\ref{surfaceorder}. 
\begin{figure}[htb]
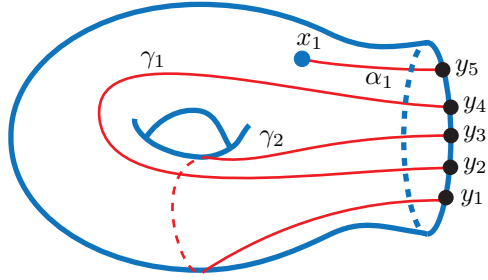

\begin{overpic}
{fig/surfaceorder}
\put(50,80){$\gamma_1$}
\put(95,50){$\gamma_2$}
\put(135,72){$\alpha_1$}
\put(109, 87){$x_1$}
\put(171,28){$y_1$}
\put(172,40){$y_2$}
\put(172,52){$y_3$}
\put(172,63){$y_4$}
\put(169,77){$y_5$}
\end{overpic}
\caption{The arcs used in defining the ordering of $Mod(S,\partial S)$.}
\label{surfaceorder}
\end{figure}

To define our left-invariant ordering on $Mod(S,\partial S)$ we need a couple of simple observations.
\begin{lem}\label{lem1}
Let $f$ and $g$ be two diffeomorphisms the surface $S$ that fix the boundary of $S$. If $f$ and $g$ act the same way on each arc in a basis $\mathcal{A}$ then $f$ is isotopic to $g$. (Here we say $f$ and $g$ act the same on an arc $c$ if $f(c)$ and $g(c)$ are isotopic relative to their end points.)
\end{lem}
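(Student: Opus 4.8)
The plan is to reduce to the case $g=\Id$ and then run the \emph{Alexander method}: once a diffeomorphism is known to fix a collection of arcs that cut the surface into a disk, it must be isotopic to the identity.

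First I would set $h=g^{-1}\circ f$. Since $g^{-1}$ is a diffeomorphism it carries isotopic arcs to isotopic arcs, so from $f(c)\simeq g(c)$ (rel endpoints) I obtain $h(c)=g^{-1}(f(c))\simeq g^{-1}(g(c))=c$ for every $c\in\mathcal{A}$. Applying this to the arcs $\alpha_j$, whose non-boundary endpoint is the marked point $x_j$, shows $h(x_j)=x_j$; thus $h$ fixes each marked point and fixes $\partial S$ pointwise, and it suffices to prove that $h$ is isotopic to the identity.

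Next I would isotope $h$ so that it fixes each arc of $\mathcal{A}$ pointwise, treating the arcs one at a time. Because $h(\gamma_1)\simeq\gamma_1$ rel endpoints, the isotopy extension theorem produces an ambient isotopy of $S$, fixing $\partial S$, carrying $h(\gamma_1)$ back to $\gamma_1$; composing $h$ with it gives a map isotopic to $h$ that fixes $\gamma_1$ setwise, and since $h$ also fixes the endpoints of $\gamma_1$ a further isotopy supported near $\gamma_1$ makes it fix $\gamma_1$ pointwise. To fix the next arc without disturbing $\gamma_1$, I would pass to the surface cut along $\gamma_1$ and repeat, using that the arcs of $\mathcal{A}$ are disjoint so each subsequent $h(\gamma_i)$ (resp.\ $h(\alpha_j)$) can be taken disjoint from the already-fixed arcs and the realizing ambient isotopy can be chosen supported away from them. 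Inductively I fix all of the $\gamma_i$ and then all of the $\alpha_j$. At this stage the arcs of $\mathcal{A}$ cut $S$ into a single disk --- the $\gamma_i$ cut $S$ into a disk with $n$ marked points, and the $\alpha_j$ then cut this into a disk with no marked points --- on which $h$ is a diffeomorphism fixing the boundary pointwise, hence isotopic rel boundary to the identity (as recalled in Section~\ref{ht}). Reassembling, $h\simeq\Id$, so $f\simeq g$.

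The main obstacle is precisely the inductive step: guaranteeing that the ambient isotopy used to straighten one arc does not undo the arcs already fixed. This is the technical heart of the Alexander method, and it is handled by cutting along the fixed arcs and choosing each realizing isotopy to be supported in their complement, using disjointness and minimal position of $h(\gamma_i)$ with the already-fixed arcs to arrange this.
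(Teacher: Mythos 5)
The paper states Lemma~\ref{lem1} without proof, so there is no argument of the authors' to compare against; it is implicitly the standard ``Alexander method'' (cf.\ Proposition~2.8 of \cite{FarbMargalit12}, which the paper cites elsewhere). Your proposal is exactly that argument --- reduce to $h=g^{-1}\circ f$, inductively straighten the disjoint arcs of $\mathcal{A}$ while keeping the already-straightened ones fixed by working in the cut surface, and finish with Alexander's lemma on the resulting disk --- and it is correct, with the one genuinely technical point (that each realizing ambient isotopy can be chosen to avoid the previously fixed arcs) correctly identified and handled in the standard way.
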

\begin{lem}\label{lem2}
Let $c$ and $c'$ be two properly embedded arcs in $S$ that have the same end points. The arcs are isotopic if and only if neither $c<_x c'$ nor $c'<_x c$ at each end point $x$ of $c$. 
\end{lem}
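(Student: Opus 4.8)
The plan is to prove both directions of the biconditional, trading on the fact that the relation $<_x$ is defined only after isotoping the arcs to intersect minimally. The statement to establish is: two properly embedded arcs $c$ and $c'$ sharing the same endpoints are isotopic (rel endpoints, not crossing marked points) if and only if at each shared endpoint $x$ neither $c<_x c'$ nor $c'<_x c$ holds. The forward direction is immediate: if $c$ and $c'$ are isotopic rel endpoints, then after the minimal-position isotopy required to define $<_x$ they coincide, so their oriented tangents at each $x$ agree and neither can be strictly to the right of the other. I would phrase this by noting that the trichotomy in the definition of $<_x$ (right of, left of, or equal tangent direction) forces the ``equal'' alternative precisely when the arcs are not transverse at $x$ in the minimal position.

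\smallskip

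The content is in the reverse direction. So suppose that at every endpoint $x$ of $c$ neither $c<_x c'$ nor $c'<_x c$; I want to conclude $c\simeq c'$. First I would put $c$ and $c'$ in minimal position, so that the number of interior intersection points is as small as possible within their isotopy classes. The hypothesis says that at each shared endpoint the oriented tangents of $c$ and $c'$ agree, i.e.\ the arcs leave each endpoint in the same direction. The strategy is then to argue by contradiction: if $c$ and $c'$ were not isotopic, then in minimal position they would either cross in the interior, or bound a bigon, or (at an endpoint) separate and enclose a disk. The key geometric input is the standard bigon criterion for arcs on surfaces (see \cite{FarbMargalit12}): two properly embedded arcs in minimal position are isotopic rel endpoints if and only if they are disjoint in the interior, and disjoint arcs with common endpoints that are not isotopic must cut off a disk whose boundary consists of a subarc of $c$ and a subarc of $c'$.

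\smallskip

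The heart of the argument is to show that the tangency condition at the endpoints rules out such an innermost disk. Concretely, an innermost disk bounded by a subarc of $c$ and a subarc of $c'$ must have two corners; at least one of these corners is one of the shared endpoints $x$ (since the arcs share both endpoints and, being in minimal position, have no interior bigons once we exhaust them). At that corner the two arcs emanate into the disk on opposite sides, which forces their oriented tangents at $x$ to be \emph{distinct} — hence one of $c<_x c'$ or $c'<_x c$ must hold, contradicting the hypothesis. This is the step I expect to be the main obstacle, since one must carefully account for the possibility that the disk is cut off at an endpoint versus in the interior, and one must verify that the marked points (which the isotopy may not cross) cannot obstruct removing the disk; I would handle the latter by observing that an innermost such disk contains no marked points in its interior, so the isotopy removing it is admissible. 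Once the disk is excluded, the bigon criterion yields that $c$ and $c'$ are isotopic, completing the proof.
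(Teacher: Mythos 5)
The paper states this lemma without proof (it is offered as one of ``a couple of simple observations''), so there is no argument of the authors to compare against; judging your proposal on its own merits, the reverse direction has a genuine gap. The step that fails is the ``bigon criterion'' you invoke: it is \emph{not} true that two properly embedded arcs in minimal position are isotopic rel endpoints if and only if they are disjoint in the interior, nor that disjoint, non-isotopic arcs with common endpoints cut off a disk cobounded by subarcs of the two. In the annulus, take $c$ a spanning arc and $c'$ its image under a Dehn twist about the core, arranged to share endpoints: these are disjoint in the interior and not isotopic, and the closed curve $c\cup c'$ is essential, so there is no disk bounded by a subarc of $c$ and a subarc of $c'$ anywhere. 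This is precisely the case your argument must handle -- interior crossings and honest bigons are already excluded by minimal position -- and your innermost-disk contradiction never gets started because the disk you propose to analyze does not exist. A secondary problem: even when a disk with a corner at a shared endpoint $x$ is present, two arcs cobounding a wedge at $x$ can perfectly well be tangent there (a wedge of zero angle), so ``the arcs emanate into the disk on opposite sides'' does not force their tangent vectors to be linearly independent, which is what the definition of $<_x$ requires.

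What is actually needed is a global argument showing that non-isotopic arcs in minimal position diverge to a definite side at each common endpoint. The standard way to get this (and simultaneously to see that the relation $<_x$ is well defined, independent of the minimal-position representatives) is to fix a complete hyperbolic metric on $S$ with geodesic boundary, treating marked points as punctures or cone points, and replace $c$ and $c'$ by their geodesic representatives rel endpoints. Geodesic representatives realize minimal position, and two distinct geodesics emanating from the same point have distinct tangent vectors there, by uniqueness of geodesics with given initial conditions. Hence if $c$ and $c'$ are not isotopic, then at every common endpoint one of $c<_x c'$ or $c'<_x c$ holds, which is the contrapositive of the direction you want; equivalently one can lift to the universal cover and compare endpoints at infinity, as in \cite{HondaKazezMatic07}. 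Your forward direction is fine as written.
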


We now define a left-invariant ordering $<_{\mathcal{A, O}}$ on $Mod(S,\partial S)$ as follows. If $f$ and $g$ are two diffeomorphisms of $S$ the was say $f<_{\mathcal{A, O}}g$ if there is some $i$ in $\{1,\ldots, 4g+2k-2+n\}$ such that the arc $\beta$ in $\mathcal{A}$ with end point $y_i$ satisfies $f(\beta)<_{y_i} f(\beta)$ and all the arcs $\mathcal{A}$ with endpoints $y_j$ for $j<i$ are fixed (up to isotopy) by $f\circ g^{-1}$. 

More informally we say that $f<_{\mathcal{A, O}}g$  if running through the end points in the order $\mathcal{O}$ the first time $f$ and $g$ act differently on an arc in the basis then $g$ moves the arc to the right of $f$ at that end point. Notice that according to Lemmas~\ref{lem2} if neither $f<_{\mathcal{A, O}}g$ nor $g<_{\mathcal{A, O}}f$ then $f$ and $g$ act the same on all the arcs in $\mathcal{A}$ and then Lemma~\ref{lem1} implies that $f$ and $g$ are isotopic. So we see that $<_{\mathcal{A, O}}$  is a strict linear ordering. One may easily check that the ordering is left-invariant since applying a diffeomorphism to a pair of arcs will not change whether or not one is to the right or left of the other. Thus we have a left-invariant order on $Mod(S,\partial S)$ and the corresponding monoid
\[
M_{<_\mathcal{A,O}}
\]
that depends on the basis $\mathcal{A}$ and ordering $\mathcal{O}$ of the endpoints of the arcs. 

It is clear that the intersection of monoids in a group is also a monoid. So to remove the dependence of the basis and ordering when defining $M_{<_\mathcal{A,O}}$ we can define the \dfn{right-veering monod} as follows
\[
Veer^+(S)= \bigcap_{\mathcal{A},\mathcal{O}} M_{<_\mathcal{A,O}},
\]
where the intersection is taken over all bases $\mathcal{A}$ for $S$ and all orderings $\mathcal{O}$ of the endpoints. If $\phi:S\to S$ is a diffeomorphism in $Veer^+(S)$ then we call it \dfn{right-veering} (though it is probably more accurate to call it non-left-veering).  Once can easily verify that $\phi$ is right-veering if and only if for every embedded arc $\gamma$ the image $\phi(\gamma)$ is to the right of $\gamma$ at each end point (or isotopic to $\gamma$). Since the work of Honda, Kazez and Mati\'c \cite{HondaKazezMatic07} the notion of right-veering has become a central notion in contact geometry due in large part to the following fundamental theorem.
\begin{thm}[Honda, Kazez and Mati\'c, 2007 \cite{HondaKazezMatic07}]
If $\xi$ is a tight contact structure on a closed 3--manifold, then any open book supporting $\xi$ has right-veering monodromy.
\end{thm}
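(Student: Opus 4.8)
The plan is to prove the contrapositive: I will show that if the monodromy $\phi$ of an open book $(\Sigma,\phi)$ supporting $\xi$ fails to be right-veering, then $\xi$ is overtwisted. By the definition of $Veer^+(S)$ recalled above, non-right-veering means there is a properly embedded arc $\gamma$ in a page and an endpoint $x\in\partial\Sigma$ at which $\phi(\gamma)$ lies strictly to the left of $\gamma$. My goal is to convert this purely combinatorial ``left-veering'' datum into an embedded disk in $M$ with Legendrian boundary and contact framing $0$; by the equivalence of items \eqref{it1} and \eqref{it1a} in Eliashberg's Theorem~\ref{eliashberg-tight}, the existence of such a disk is exactly an obstruction to tightness, so this would finish the proof.

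First I would set up the geometry of a supported contact structure near a page. Since $\xi$ is supported by $(\Sigma,\phi)$, the contact planes can be taken arbitrarily close to the tangent planes of the pages, so each page inherits a characteristic foliation whose leaves emanate transversally from the binding, and the binding itself is a transverse link positively linking each page. Using Giroux's Legendrian realization principle for (nearly) convex surfaces, I would isotope $\gamma$, rel its endpoints on the binding, to a Legendrian arc $\widetilde{\gamma}$ sitting on a single page $\Sigma\times\{t_0\}$, arranging the characteristic foliation so that the contact framing of $\widetilde{\gamma}$ agrees with the framing induced by the page.

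Next I would close $\widetilde{\gamma}$ up to a Legendrian knot. Following $\gamma$ once around the mapping torus identifies it with $\phi(\gamma)$ on the returning page; because $\phi(\gamma)$ is to the \emph{left} of $\gamma$ at $x$, the arc $\gamma$ and a translate of its image cobound an embedded bigon-type region that I can cap off, together with a short Legendrian arc running along the binding, to produce an embedded disk $D\subset M$ with $\partial D$ Legendrian. The heart of the argument is then a framing computation: I would check that along $\partial D$ the contact framing coincides with the page framing, and that the ``to-the-left'' hypothesis forces the relative framing of $\partial D$ with respect to $D$ to be $0$ (an overtwisted disk, or bypass half-disk), rather than the negative value one is forced into when $\phi$ moves every arc to the right. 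Applying Eliashberg's theorem then yields overtwistedness.

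The hard part will be the middle two steps: making $\gamma$ Legendrian on a page while keeping precise control of the characteristic foliation near the binding, and then correctly translating the discrete ``$\phi(\gamma)$ is to the left of $\gamma$'' condition into the geometric statement that the capping disk has contact framing exactly $0$. This is where Giroux's convex surface theory and a careful local model along the binding do all the work, and it is also the step that genuinely distinguishes the two directions: right-veering is \emph{not} sufficient for tightness, so the construction must (and does) break down precisely when $\phi(\gamma)$ fails to lie strictly to the left, i.e.\ when no left-veering arc is available to seed the disk.
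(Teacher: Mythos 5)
The paper does not actually prove this theorem: it is quoted as a cited result, so the only thing to compare your sketch against is the original argument of Honda--Kazez--Mati\'c (and Goodman's closely related ``sobering arc'' criterion). Your overall strategy --- prove the contrapositive and convert a left-veering arc into an embedded disk with Legendrian boundary and contact framing $0$, then invoke the equivalence of the first two items of Theorem~\ref{eliashberg-tight} --- is indeed the strategy of the known proofs, so the outline is sound.

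The genuine gap is in the step you yourself flag as the hard one, and it is not just hard but, as stated, false: left-veering of $\phi(\gamma)$ at a \emph{single} endpoint does not force the capping disk to have contact framing $0$. The disk one actually uses is not a bigon in a page capped by an arc on the binding (note $\gamma$ and $\phi(\gamma)$ may intersect many times away from $x$, so no embedded bigon need exist); it is the vertical rectangle $\gamma\times[0,1]$ inside the product region between two consecutive pages, whose boundary lies on the convex surface $\partial\bigl(\Sigma\times[0,1]\bigr)$ after edge-rounding. The contact framing of that boundary is computed by counting its intersections with the dividing set, and this count is governed by the behavior of $\gamma$ versus $\phi(\gamma)$ at \emph{both} endpoints together with their interior intersections. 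Left-veering at one endpoint only drops the count enough to produce a bypass half-disk (framing $-1$ on a half-disk with one Legendrian and one dividing-set edge), and a bypass by itself does not certify overtwistedness --- so your parenthetical ``an overtwisted disk, or bypass half-disk'' is conflating two genuinely different objects. To close the gap you need either Goodman's strengthened hypothesis (a \emph{sobering} arc, where the signed counts at both endpoints and in the interior are controlled so that the framing really is $0$), or the Honda--Kazez--Mati\'c route of working on the convex surface bounding a page neighborhood, applying Giroux's criterion for a tight neighborhood, and doing additional work (isotoping the arc to an efficient position, or attaching the bypass and iterating) to upgrade ``$\phi(\gamma)$ is to the left at one endpoint'' to a homotopically trivial dividing curve. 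Without one of these two mechanisms the framing computation at the heart of your argument does not come out to $0$.
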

So once again we see that a natural monoid has connections to interesting geometric properties. It is easy to see that there are many open books for overtwisted contact structures that are also right-veering so this monoid does not completely characterize right-veering, but it does provide quite a bit of insight into the tight vs.\ overtwisted dichotomy. 

\subsection{Quasi-morphisms}
Notice that if we have a group $G$, another group $H$ with a left-invariant ordering $<$, and a homomoprhism $f:G\to H$, then we can construct monoids in $G$ as we did for $H$. That is we can set
\[
M_{f,<}=\{g\in G: g=e_G \text{ or } e_H<f(g)\},
\]
where $e_G$ is the identity element in $G$ and analogously for $e_H$. But one can get by with much less. Suppose that we have a quasi-morphism from a group $G$ to the real line $\R$. This is simply a map of sets 
\[
q:G\to \R
\]
such that there is some constant $C$ that satisfies 
\[
|q(g_1g_1)-q(g_1)-q(g_2)|<C
\]
for all $g_1, g_2\in G$. (Notice if $C=0$ then $q$ is a homomorphism.) Given this and a number $r$ can now consider the sets
\[
M_{r,q}=\{g\in G: g=e \text{ or } q(g)\geq r\}.
\]
It should be clear that if $r\geq C$ then $M_{r,q}$ is a monoid. 

We now consider such a quasi-morphism on the mapping class group. 
Given a diffeomorphism of a surface $\phi:S\to S$ that is the identity on the boundary one can define the fractional Dehn twist coefficient (or FDTC for short) of $\phi$ relative to a boundary component $C$ of $S$. We denote this by $c(\phi, C)$. This was originally studied in Gabai and Ortel's work \cite{GabaiOertel89} on essential laminations and then Roberts \cite{Roberts01, Roberts01b} when studying taut foliations. Its most modern incarnation occurred in work of Honda, Kazez and Mati\'c \cite{HondaKazezMatic07, HondaKazezMatic08} in relation to contact geometry. 

There are several definitions of the FDTC. We give a simple topological definition and then state a few properties that are useful for computations and our discussion of monoids. Recall that according to the Nielsen-Thurston classification of surface diffeomorphisms a diffeomorphism $\phi:S\to S$ is freely isotopic to a diffeomorphism $h:S\to S$ that is (1) periodic, (2) pseudo-Anosov, or (3) reducible. (Here freely isotopic means the isotopy can move the boundary.) Recall $h$ is periodic if there is an $n$ such that $h^n$ is freely isotopic to the identity on $S$ and it is reducible if there is a non-empty collection of curves on $S$ that is preserved by $h$. The diffeomorphism is pseudo-Anosov if there is a pair of measured geodesic laminations $\lambda_s$ and $\lambda_u$ that are preserved and dilated in certain ways by $h$, see \cite{HondaKazezMatic07} for more details. (Notice that a periodic diffeomorphism is technically reducible too, so when we call $h$ reducible we will mean that it is not periodic and preserves a multi-curve.) In the case that $h$ is pseudo-Anosov we define the {fractional Dehn twist coefficient} for the boundary component $C$ of $S$ as follows. Notice that the diffeomorphism $h$ induces a flow on the mapping torus $T_\phi$ of $\phi$. When restricted to the torus $C\times S^1$ in $\partial T_\phi$ this flow will have rational slope and hence its flow lines will contain parallel closed curves. Let one be $\gamma$. Recall that we have a natural longitude $\lambda=C\times\{pt\}$ and meridian $\mu=\{pt\}\times S^1$ for $C\times S^1$ and hence $\gamma$ is homologous to $p\lambda+q\mu$. The \dfn{fractional Dehn twist coefficient}  of $\phi$ along $C$ is
\[
c(\phi, C)=\frac pq.
\]
There is a similar definition for reducible and periodic diffeomorphisms. For more details on the definition see \cite{HondaKazezMatic07}, for now, we will simply focus on important properties of the FDTC. Specifically it is know that the FDTC give a quasi-morphism from the mapping class group to the rational numbers. That is if you fix a boundary component $C$ of $S$ then 
\[
c(\cdot, C): Mod(S,\partial S)\to \Q
\] 
satisfies 
\[
|c(\phi\circ \psi, C)-c(\phi,C)-c(\psi,C)|\leq 1.
\]
This seems to be a well known folk result, but a nice proof of it can be found in \cite{ItoKawamuro12}.

Now for any $r\in \R$ and surface $S$ with boundary define 
\begin{align*}
FDTC_r(S)=\{\phi\in Mod(S,\partial S):& \, \phi=id_S \text{ or } \\ & c(\phi, C)\geq r \text{ for all components $C$ of $\partial S$}\}.
\end{align*}
The quasi-morphism condition implies that for $r\geq 1$, $FDTC_r(S)$ is a monoid!

In \cite{HondaKazezMatic07}, it was shown that in both the periodic and pseudo-Anosov case a diffeomorphism $\phi$ was right veering if and only if all of its fractional Dehn twist coefficients were non-negative. This easily implies the same for the reducible case too. Thus one may easily conclude that
\[
FDTC_0(C)=Veer^+(S),
\]
so it is a monoid too. 

\section{Monoids via contact geometry}\label{mvcg}

Our third method for constructing monoids in the mapping class group is a bit unexpected. We have already seen that there are relations between contact geometry and monoids but one can actually construct monoids via contact geometry. 

Recall from Section~\ref{girouxcor} that given an element $\phi$ in the mapping class group $Mod(S,\partial S)$ one can construct a 3--manifold  $M_\phi$ and a contact structure $\xi_\phi$ on it. Now given a property $\mathcal{P}$ of a contact structure one can define a subset of $Mod(S,\partial S)$
\[
M_\mathcal{P}(S)=\{\phi\in Mod (S,\partial S): \xi_\phi \text{ has property } \mathcal{P} \}.
\]
A main question we have is when is $M_\mathcal{P}(S)$ a monoid. Before answering this we note a few examples in Table~\ref{contactproperties1}. 
\begin{table}[htdp]
\begin{center}
\begin{tabular}{c c}
Property $\mathcal{P}$ & Name for the subset $M_\mathcal{P}(S)$\\
\hline
Stein fillability & $Stein(S)$\\
Strong fillability & $Strong(S)$\\
Weak fillability & $Weak(S)$\\
Tightness & $Tight(S)$\\
Non-zero Ozsv\'ath-Szab\'o contact invariant & $OzSz(S)$\\
Universal Tightness & $UT(S)$\\
Tight but virtually overtwisted & $VOT(S)$\\
Overtwisted & OT(S)\\
\hline
\end{tabular}
\end{center}
\caption{Contact geometric subsets of the mapping class group. 
}\label{contactproperties1}
\end{table}%
Here we recall that to a contact structure $\xi$ on a 3--manifold $M$ there is an element $c(\xi)$ in the Heegaard-Floer groups $\widehat{HF}(-M)$ that is zero if the contact structure is overtwisted, \cite{OzsvathSzabo05a}. So $OzSz(S)$ corresponds to monodromies of open books with $c(\xi)\not=0$. 

Recall from Section~\ref{dt} we have the monoid $Dehn^+(S)$ consisting of mapping class elements that can be written as the composition of right handed Dehn twists and from Section~\ref{ordermonoid} we have the monoid $Veer^+(S)$ of right veering diffeomorphisms. These two monoids and the sets above are related according to the following diagram (where the surface $S$ has been suppressed from the notation for the sake of space)
\[
\begin{tikzcd}[row sep=tiny, column sep=small]
					&					&			&OzSz \arrow[hook]{dr}{(5)}	\\
Dehn^+ \arrow[hook]{r}{(1)}& Stein\arrow[hook]{r}{(2)}& Strong \arrow[hook]{ur}{(3)}\arrow[hook]{dr}{(4)} & & Tight \arrow[hook]{r}{(7)}&Veer^+.	\\
					&					&			&Weak \arrow[hook]{ur}{(6)}
\end{tikzcd}
\]
All the arrows represent inclusions. Inclusion (1) follows from \cite{Eliashberg90a, Giroux02}, inclusions (2) and (4) are obvious, (3) follows from \cite{OzsvathSzabo04a}, (5) form \cite{OzsvathSzabo05a}, (6) from \cite{Eliashberg90a, Gromov85} and (7) from \cite{HondaKazezMatic07}. It is also known that all the inclusions are strict. The strictness of (7) comes from \cite{HondaKazezMatic07}, (5) and (6) follow from \cite{Ghiggini06} and \cite{EtnyreHonda02b}, respectively, while (3) and (4) follow from \cite{Ghiggini06b}  and \cite{Eliashberg96}, respectively and (2) was shown to be a strict inclusion in \cite{Ghiggini05}. Lastly the strictness of (1) follows from \cite{BakerEtnyreVanHorn-Morris12, Wand??}. We also note that 
\[
Tight(S)=UT(S)\cup VOT(S).
\]
We now return to our question as to which $M_\mathcal{P}(S)$ are monoids. 
\begin{thm}[Baker, Etnyre, and Van Horn-Morris 2012, \cite{BakerEtnyreVanHorn-Morris12}; and Baldwin 2012, \cite{Baldwin12}]\label{monoidsfromcont}
Let $\mathcal{P}$ be a property of a contact structure. Then $M_\mathcal{P}(S)$ is a monoid if and only if $\mathcal{P}$ is preserved under (possibly internal) connected sums and Legendrian surgery (and $\xi_{id_S}$ satisfies the property). 
\end{thm}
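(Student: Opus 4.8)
The plan is to reduce the monoid property to a single geometric fact relating the contact structure of a product of monodromies to those of its factors. The heart of the argument is the following statement, which I will call the \emph{Key Lemma}: for any $\phi,\psi\in Mod(S,\partial S)$, the contact manifold $(M_{\phi\psi},\xi_{\phi\psi})$ is obtained from the contact connected sum $(M_\phi,\xi_\phi)\mathbin{\#}(M_\psi,\xi_\psi)$ by a sequence of Legendrian surgeries. Once this is in hand, both directions of the theorem follow by matching the two operations in the hypothesis (connected sum and Legendrian surgery) against the two ingredients of the lemma.

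To prove the Key Lemma I would exhibit an explicit Weinstein cobordism from $(M_\phi,\xi_\phi)\sqcup(M_\psi,\xi_\psi)$ to $(M_{\phi\psi},\xi_{\phi\psi})$, namely the Lefschetz fibration over a pair of pants with fiber $S$ whose two incoming ends carry monodromies $\phi$ and $\psi$ and whose outgoing end carries $\phi\psi$. Decomposing this cobordism into handles, the single index--$1$ handle realizes the connected sum $M_\phi\mathbin{\#}M_\psi$ (the connect-summed open book having page the boundary connect sum $S\natural S$ with monodromy $\phi\natural\psi$), while the remaining index--$2$ handles are attached along Legendrian curves lying on pages, and so realize Legendrian surgeries; here I would invoke the standard dictionary identifying a positive Dehn twist in the monodromy with a Legendrian surgery on the corresponding page curve. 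Verifying that the handles can be ordered as one $1$--handle followed by index--$2$ handles, and that the terminal open book is a positive stabilization of $(S,\phi\psi)$, is the step I expect to require the most care, and is the \textbf{main obstacle} of the whole proof.

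Granting the Key Lemma, the \emph{if} direction is immediate. The unit $id_S$ lies in $M_\mathcal{P}(S)$ precisely because $\xi_{id_S}$ satisfies $\mathcal{P}$, which is assumed. For closure, suppose $\phi,\psi\in M_\mathcal{P}(S)$, so that $\xi_\phi$ and $\xi_\psi$ both satisfy $\mathcal{P}$. Since $\mathcal{P}$ is preserved under connected sum, $\xi_\phi\mathbin{\#}\xi_\psi$ satisfies $\mathcal{P}$; applying the Key Lemma together with preservation under Legendrian surgery, $\xi_{\phi\psi}$ satisfies $\mathcal{P}$, i.e. $\phi\psi\in M_\mathcal{P}(S)$. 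Thus $M_\mathcal{P}(S)$ contains the unit and is closed under composition, so it is a monoid.

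For the \emph{only if} direction I would force each hypothesis by realizing the two operations inside the monoid. That $\xi_{id_S}$ satisfies $\mathcal{P}$ is automatic, since $id_S$ is the unit. To see that Legendrian surgery preserves $\mathcal{P}$, represent a given $(M,\xi)$ satisfying $\mathcal{P}$ by an open book $(S,\phi)$ in which the surgery curve appears as a page curve $\gamma$; the surgered manifold is $(M_{\tau_\gamma\phi},\xi_{\tau_\gamma\phi})$, and closure applied to the factorization $\tau_\gamma\cdot\phi$ gives the claim. Connected sums are realized analogously, via boundary connect sums of pages with monodromies extended by the identity. The delicate point for the converse—and the reason the statement is phrased to allow \emph{internal} connected sums—is that extending a monodromy by the identity alters the supported manifold by summing with the standard tight filling carried by $\xi_{id}$, so one must track these auxiliary summands and ensure the isolated factors themselves lie in $M_\mathcal{P}(S)$; this bookkeeping, rather than any new geometry, is the obstacle specific to the \emph{only if} direction.
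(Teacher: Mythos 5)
Your overall strategy is the same as the paper's: the survey's entire ``proof'' is the one\--sentence remark that $(M_{\phi\circ\psi},\xi_{\phi\circ\psi})$ can be built from $(M_\phi,\xi_\phi)\sqcup(M_\psi,\xi_\psi)$ by the operations in the hypothesis, with the construction deferred to \cite{BakerEtnyreVanHorn-Morris12} and \cite{Baldwin12}. Your Key Lemma is exactly that remark, and the deduction of the ``if'' direction from it is fine. But there are two concrete problems. First, the Key Lemma is stated more strongly than what the references establish: you assert a \emph{single} external connected sum followed by Legendrian surgeries, whereas the construction in general requires a sequence of connected sums of which only the first is external --- this is precisely why the theorem says ``(possibly internal) connected sums'' rather than ``connected sum.'' For the ``if'' direction the overstatement is harmless, since the hypothesis covers internal sums anyway; but it undercuts the ``only if'' direction, where preservation under internal connected sums must be shown to be \emph{necessary}. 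Your converse only realizes external connected sums and single Legendrian surgeries inside the monoid, so as written it cannot force the internal\--sum hypothesis; indeed, if your Key Lemma were true as stated, that hypothesis could not be necessary at all.

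Second, the pair\--of\--pants fibration does not by itself give a cobordism between the right manifolds. The boundary of an $S$\--bundle over the pair of pants $P$ is the union of the three mapping tori with the vertical piece $P\times\partial S$, and the latter cannot be split into the $3k$ solid tori (one per binding component of each of the three open books) needed to cap the mapping tori off into $M_\phi$, $M_\psi$ and $M_{\phi\psi}$. Resolving this is not a deferred verification --- it is the entire content of the lemma, and it is exactly where the extra internal connected sums enter. The arguments in \cite{BakerEtnyreVanHorn-Morris12, Baldwin12} sidestep the fibration picture and work directly with open books: one realizes the (possibly internal) connected sums by (iterated) Murasugi sums of the pages, and then identifies $(S,\phi\circ\psi)$, after positive stabilization, with the result of composing the summed monodromy with positive Dehn twists along page curves, i.e.\ with Legendrian surgeries on the summed contact manifold. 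You should either carry out that open\--book argument or cite it; the step you label the ``main obstacle'' is a genuine gap in the proposal as it stands.
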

It is well known that the first three and the fifth property in Table~\ref{contactproperties1} are preserved under connected sum and Legendrian surgery and in \cite{Wand14?} Wand showed the same for the fourth property. Thus by Theorem~\ref{monoidsfromcont} we see that the first 5 properties in the table define monoids. (That $OzSz(S)$ is a monoid was previous shown by Baldwin in \cite{Baldwin08}.) It is also not hard to see that the last three properties do not give monoids.  Specifically it is well known that overtwistedness is not preserved by Legendrian surgery and Gompf in \cite{Gompf98} gave examples of Legendrian surgeries on universally tight contact structures that resulted in virtually overtwisted contact structures. The following example shows that $VOT(S)$ is not a monoid. 

\begin{example}\label{example}
Consider the planar surface $S$ in Figure~\ref{FDTCex} and the curves $\gamma_i$ shown there too. Let $\phi=\tau_{\gamma_1}^2\tau_{\gamma_2}^2\tau_{\gamma_3}\tau_{\gamma_4}\tau_{\gamma_5}^{-1}$. One may easily check that this open books supports the virtually overtwisted contact structure on the lens space $L(4,1)$ (that is the contact structure coming from Legendrian surgery on the $tb=-3, r=0$ unknot in $S^2$, and the given monodromy comes form the ``obvious monodromy" by a lantern relation). Now let $\phi'$ and $\phi''$ be the monodromies obtained by rotating Figure\ref{FDTCex} by $\frac{2\pi}{3}$ and $\frac{4\pi}{3}$, respective. Each of these monodromies gives a virtually overtwisted contact structure on $L(4,1)$. The composition of all of the monodromies gives (after applying a lantern relation) $\phi\circ\phi'\circ\phi''= \tau_{\gamma_1}^4\tau_{\gamma_2}^4\tau_{\gamma_3}^4\tau_{\gamma_4}^2$.
\begin{figure}[htb]
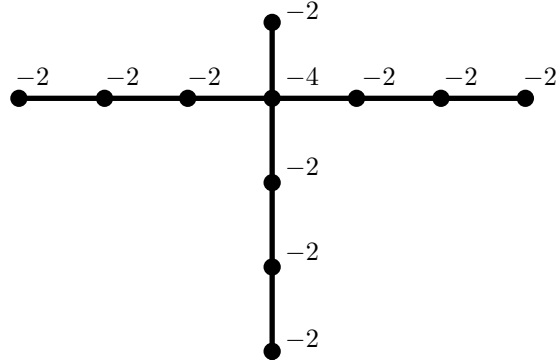

\begin{overpic}
{fig/plumbing_smlbl}
\put(104,70){$-2$}
\put(104,38){$-2$}
\put(104,5){$-2$}
\put(104,129){$-2$}
\put(104,104){$-4$}
\put(133,104){$-2$}
\put(164,104){$-2$}
\put(194,104){$-2$}
\put(2,104){$-2$}
\put(36,104){$-2$}
\put(67,104){$-2$}
\end{overpic}
\caption{Plumbing diagram of the Milnor fillable contact structure.} 
\label{fig:plumbing}
\end{figure}
 
In \cite{EtguOzbagci06}, Etg\"u and Ozbagci show how to produce a planar open book decomposition with positive monodromy on any manifold described by a plumbing along a tree with no bad vertices. Reversing their construction, one can see that the open book with monodromy $ \tau_{\gamma_1}^4\tau_{\gamma_2}^4\tau_{\gamma_3}^4\tau_{\gamma_4}^2$ corresponds to an open book coming from their construction on the plumbing given in Figure \ref{fig:plumbing}. This is a negative definite plumbing along a tree with no bad vertices and by \cite{BhupalOzbagci11, EtguOzbagci06} one can see that the 
open books are horizontal and support the Milnor fillable contact structure on the corresponding Seifert-fibered space.

Lek{\i}l{\i} and Ozbagci \cite{LekiliOzbagci10} show that Milnor fillable contact structures are universally tight. Alternatively, since the above open book is horizontal the contact structure is transverse to the fibers of the Seifert fibration and one can also conclude the contact structure is universally tight using a result of Massot from \cite{Massot08}.  Combining these observations shows that the composition of the three virtually overtwisted monodromies $\phi\circ\phi'\circ\phi''$ yields a universally tight contact structure and so for the four-holed sphere, $VOT(S)$ is not a monoid, and one can extend this to most other surfaces by adding one handles and extending by the identity. \end{example}

There are several proof of Theorem~\ref{monoidsfromcont}, but the basic idea for the ``hard" direction is that one can construct the contact manifold supported by $(S,\phi\circ\psi)$ from the disjoint union of the contact manifolds supported by $(S,\phi)$ and $(S,\psi)$ by a sequence of the operations in the theorem.

\section{Questions about monoids}
One might now be interested in the structure of the monoids discussed above. Here we discuss a few obvious questions.
\begin{question}
Are the monoids above ``easily" presented? Are any finitely presented or finitely generated?
\end{question}
It is known \cite{Baldwin07, HondaKazezMatic09a} that when $S$ has genus 1 and 1 boundary component, then 
\[
OzSz(S)=Tight(S)=Veer^+(S).
\]
It is also known that $Veer^+(S)-Dehn^+(S)$ is non-empty. So one is naturally left to ask the following question.
\begin{question}
What is the relation between the monoids $Dehn^+(S)$, $Stein^+(S)$, $Strong(S)$, $Weak(S)$ and $OzSz(S)$ when $S$ is genus one with one boundary component? What are their generators?
\end{question}

In \cite{Baldwin07} it was shown that for a surface $S$ with of genus one with one boundary component $Tight(S)$ is {\em normally} generated by 
\[
\tau_a, \tau_b, (\tau_a\tau_b)^3\tau_b^{-n}, \text{ for } n\in \Z,
\]
where $a$ and $b$ are simple closed curves in $S$ that intersect once.
From this one can easily show the following.

\begin{thm}
For any surface $S$ with boundary $Tight(S)$ is not finitely generated.
\end{thm}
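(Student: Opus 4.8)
The plan is to exhibit infinitely many \emph{indecomposable} elements (atoms) of the monoid $Tight(S)$ and to invoke the elementary principle that, in a monoid with no nontrivial units, every atom must lie in every generating set. First I would record that $Tight(S)$ has no nontrivial units: if $\phi$ and $\phi^{-1}$ both lie in $Tight(S)\subseteq Veer^+(S)$, then for every arc $\gamma$ both $\phi(\gamma)$ and $\phi^{-1}(\gamma)$ are to the right of $\gamma$, which forces $\phi(\gamma)=\gamma$ for all $\gamma$ and hence $\phi=\mathrm{id}$ by Lemma~\ref{lem1}. Consequently, if $a=xy$ with $a$ an atom then one of $x,y$ is the identity, so expressing $a$ as a word in any generating set $F$ collapses it to a single letter and $a\in F$. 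Thus it suffices to produce infinitely many atoms.

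For the atoms I would use positive Dehn twists. Since $S$ has boundary and $\chi(S)<0$, there are infinitely many isotopy classes of essential, non–boundary–parallel simple closed curves $c$ (for $S=\Sigma_{1,1}$ these are indexed by slopes in $\Q\cup\{\infty\}$, exactly as on the torus), and the corresponding positive twists $\tau_c$ are pairwise distinct elements of $Dehn^+(S)\subseteq Tight(S)$. The claim to establish is that each $\tau_c$ is an atom. The natural tool is the fractional Dehn twist coefficient $c(\cdot,C)$, which is conjugation–invariant and, being a homogeneous quasimorphism, is additive on commuting elements; one has $c(\tau_c,C)=0$ for an interior curve $c$, while the right–veering hypothesis forces $c(u,C),c(v,C)\ge 0$ for any factorization $\tau_c=uv$ inside $Tight(S)$. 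Combined with the Honda–Kazez–Mati\'c \cite{HondaKazezMatic07} arc characterization of right–veering, applied to arcs meeting $c$ once on which $\tau_c$ realizes the \emph{minimal} nontrivial rightward displacement, this should pin every factor to act as the identity or as $\tau_c$ on a spanning family of arcs, whence $v=\mathrm{id}$ or $u=\mathrm{id}$ by Lemma~\ref{lem1}.

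The main obstacle is precisely this indecomposability statement, and it demands a genuinely geometric argument rather than a soft quasimorphism estimate. Indeed $c(\cdot,C)$ by itself cannot suffice: it has nonzero defect even on $Veer^+(S)$. For instance $c((\tau_a\tau_b)^3\tau_b^{-n},C)=\tfrac12$ for all $n$, since $(\tau_a\tau_b)^6=\tau_{\partial}$ gives $c((\tau_a\tau_b)^3,C)=\tfrac12$, the element $(\tau_a\tau_b)^3$ is central in $Mod(\Sigma_{1,1},\partial\Sigma_{1,1})$, and $c(\tau_b,C)=0$; hence the quasimorphism inequality loses a term proportional to the number of factors and detects nothing about long products. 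The same computation explains why Baldwin's normal generators are themselves \emph{not} atoms: one has the honest factorization $(\tau_a\tau_b)^3\tau_b^{-n}=\tau_b\cdot(\tau_a\tau_b)^3\tau_b^{-(n+1)}$ in $Tight(S)$, so $Tight(S)$ contains elements with arbitrarily long factorizations into nonunits, and this failure of bounded factorization is exactly the phenomenon that ultimately forces infinite generation. The heart of the argument is therefore the arc/contact–geometric proof that the individual twists $\tau_c$ are indecomposable; once that is in hand, their infinitude — together with the reduction to general $S$ by either arguing directly with the infinitely many curves in $S$ or embedding $\Sigma_{1,1}\hookrightarrow S$ and extending by the identity — completes the proof.
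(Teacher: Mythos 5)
Your reduction is the right one and matches the paper's: $Veer^+(S)\supseteq Tight(S)$ has no nontrivial units (your argument for this is correct), so every atom of the monoid must appear in any generating set, and it suffices to produce infinitely many positive Dehn twists that are atoms; your observation that Baldwin's normal generators are themselves decomposable is also correct. But the step that carries all of the content --- indecomposability of $\tau_c$ in $Veer^+(S)$ --- is exactly the step you leave open, and the route you sketch for it does not work. There is no ``minimal nontrivial rightward displacement'' to appeal to: the right-order on isotopy classes of properly embedded arcs with fixed endpoints is not discrete in general, so arcs strictly between $\alpha$ and $\tau_c(\alpha)$ do exist (already on the once-punctured torus, where arcs based at a boundary point are circularly ordered like $\Q\cup\{\infty\}$ via slopes and $\tau_c$ moves slope $0$ to slope $1$, skipping over slope $1/2$). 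Even granting such a dichotomy arcwise, you would still need to rule out a factorization $\tau_c=uv$ in which $v$ acts as $\tau_c$ on some arcs and as the identity on others, and you would need a basis of arcs all meeting $c$ exactly once, which a general surface does not admit. As you note yourself, the FDTC quasimorphism cannot close this gap either.

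The missing idea is to use arcs \emph{disjoint} from $\gamma$ rather than arcs crossing it. If $\alpha$ is a properly embedded arc disjoint from $\gamma$ then $\tau_\gamma(\alpha)=\alpha$, so for any factorization $\tau_\gamma=u\circ v$ with $u,v$ right-veering the sandwich $\alpha\leq v(\alpha)\leq u(v(\alpha))=\alpha$ forces $v(\alpha)=\alpha$ (and hence $u(\alpha)=\alpha$); here the middle inequality is the arc characterization of right-veering from \cite{HondaKazezMatic07} applied to the arc $v(\alpha)$. When $\gamma$ is \emph{homologically essential} --- and this is why the paper imposes that hypothesis rather than your ``essential and non-boundary-parallel,'' since a null-homologous $\gamma$ cuts off a subsurface met by no arc disjoint from $\gamma$ --- the arcs disjoint from $\gamma$ cut the complement of an annular neighborhood $A$ of $\gamma$ into disks, so $u$ and $v$ are supported in $A$ and are therefore powers $\tau_\gamma^a$, $\tau_\gamma^b$ with $a+b=1$; right-veering forces $a,b\geq 0$, so one factor is trivial and $\tau_\gamma$ is an atom. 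Since any surface with boundary other than a disk or annulus contains infinitely many isotopy classes of homologically essential simple closed curves, this finishes the proof directly, with no need for your embedding of $\Sigma_{1,1}$ into $S$.
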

\begin{proof} Using properties of non-left veering maps, one can show that right handed Dehn twists along homologically essential, simple closed curves are initial elements in the right-veering monoid $Veer^+(S)$, that is there are no non-trivial elements less than them. To see this, we show that any factorization of $D_\gamma$ into non-left veering maps consists precisely of $D_\gamma$ (and the identity). Specifically, if $\alpha$ is a proper arc which is fixed by $D_\gamma$ (that is, it is disjoint from $\gamma$), then any right-veering factor of $D_\gamma$ also must fix $\alpha$. Thus all factors of $D_\gamma$ are supported in an annulus neighborhood of $\gamma$. 

In general, this shows that \emph{any} submonoid of $Veer^+(S)$ which contains $Dehn^+(S)$ must include Dehn twists about all simple closed curves in its generating set and so cannot be finitely generated.
\end{proof}
\begin{question} 
Do any of the contact monoids above have a finite, normally generating set? For example, $Dehn^+$ is normally generated by a single Dehn twist. \end{question}

Notice the above presentation shows that $OzSz(S)$, for $S$ a genus one, one boundary component surface, can be generated by the elements in $Dehn^+(S)$ together with $(\tau_a\tau_b)^3\tau_b^{-n}, \text{ for } n\in \Z$. This brings up the following natural question. 
\begin{question}
If $\mathcal{M}_i$, $i=1,2$, are two of the monoids above with $\mathcal{M}_1\subset \mathcal{M}_2$ then what elements from $\mathcal{M}_2$ must be added to $\mathcal{M}_1$ to generate all of $\mathcal{M}_2$? (We can think of this as asking what a generating set for a monoid being generated over a submonoid.)
\end{question}

Continuing with our natural questions we have the following. 
\begin{question}
Can you characterize when any $\phi\in Mod(S,\partial S)$ is in one of the above monoids? In particular given $\phi$ are there conditions on $\phi$ that will imply that $\xi_\phi$ is tight? Does it help if you restrict $S$ to be planar? or of small genus? or with a small number of boundary components?
\end{question}
Or a possibly simpler question is the following. 
\begin{question}
If $\phi$ is in one of the above monoids is there a condition that would force it into a sub-monoid?
\end{question}
In \cite{Wendl10} Wendl showed that for a planar $S$ we have 
\[
Dehn^+(S)=Stein(S)=Strong(S)
\]
and then in \cite{NiederkrugerWendl11} this was extended to include
\[
Strong(S)=Weak(S). 
\]

\begin{question}
Are there other interesting monoids in $Mod(S,\partial S)$ that correspond to something in the contact (or symplectic, or complex, or Riemannian) world?
\end{question}

\begin{question}
Can you use information about the fractional Dehn twist coefficients of a diffeomorphism to help with any of the above questions?
\end{question}
For example in \cite{HondaKazezMatic08} Honda, Kazez and Mati\'c showed that if $S$ has only one boundary component then $FDTC_1(S)\subset Weak(S)$; however it is known that there are $\phi\in Weak(S)-FDTC_1(S)$ and that if $S$ has more than one boundary component then $FDTC_1(S)$ does not have to be contained in $Weak(S)$, in fact Examples~\ref{fdtcexex} shows that there are $\phi\in FDTC_1(S)$ that are not even in $Tight(S)$. 
\begin{figure}[htb]
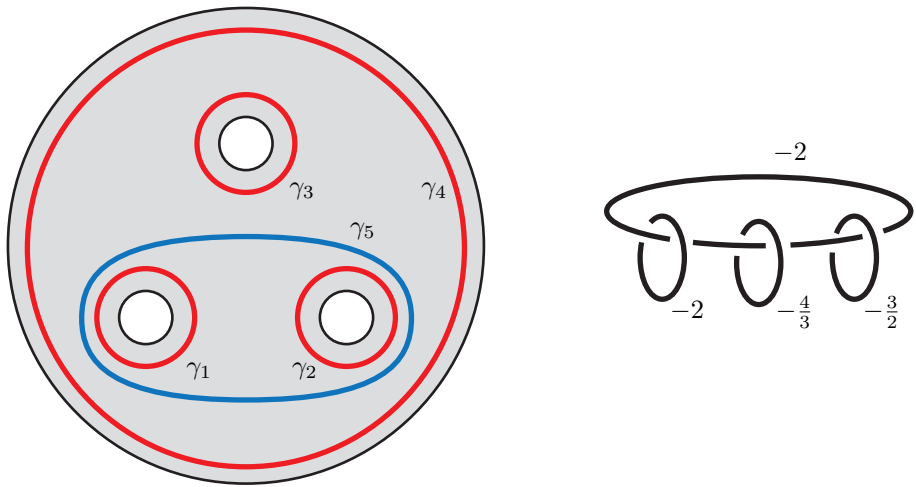

\begin{overpic}
{fig/FDTCex}
\put(68,42){$\gamma_1$}
\put(108,42){$\gamma_2$}
\put(107,110){$\gamma_3$}
\put(157,110){$\gamma_4$}
\put(130,95){$\gamma_5$}
\put(251,63){$-2$}
\put(291,63){$-\frac 43$}
\put(324,63){$-\frac 32$}
\put(290,123){$-2$}
\end{overpic}
\caption{Left, planar surface with four boundary components and curves used to describe the monodromy in Example~\ref{fdtcexex}. Right the manifold constructed in the example.}
\label{FDTCex}
\end{figure}
\begin{ex}\label{fdtcexex}
Let $S$ be the surface in Figure~\ref{FDTCex} and $\phi=\tau_{\gamma_1}^2\tau_{\gamma_2}\tau_{\gamma_3}^3\tau_{\gamma_4}^2\tau_{\gamma_5}^{-2}$, where the $\gamma_i$ are also shown in the figure. One may easily check that this manifold $M$ associated to the open book $(S, \phi)$ is the Seifert fibered space $M(-2;1/2,2/3, 3/4)$. Let $\xi$ be the supported contact structure. From \cite{TosunPre} it is know that any tight contact structure on $M$ is Stein fillable, but the Oszv\'ath-Szab\'o contact invariant of $\xi$ is zero (this can be seen using \cite{Baldwin13} by capping off the boundary component of $S$ parallel to $\gamma_1$ and noting the resulting open book supports an overtwisted contact structure). But since Stein fillable contact structures must have non-vanishing contact invariant we see that $\xi$ is overtwisted. One may easily check that the FDTCs of $\phi$ at the boundary component parallel to $\gamma_2$ is 1, while all other FDTCs are greater than 1. 
\end{ex}

Moreover Kawamuro and Ito \cite{ItoKawamuro14} have shown that if $S$ is planar with any number of boundary components, then $FDTC_r(S)\subset Tight(S)$ for any $r>1$, but the example mentioned above shows that $FTDC_1(S)\not\subset Tight(S)$.

From the above results one might hope that if a diffeomorphism is in some monoid then mild hypothesis on the FTDC might promote it to a smaller monoid. 

Colin and Honda gave similarly strong results on the tightness of a contact structure by looking at open books with connected binding.

\begin{thm}[Colin-Honda 2013 {\cite[Theorem 4.2]{ColinHonda13}}] Let $S$ be a surface with connected boundary and $h$ a mapping class element. If $h$ is periodic, then $(M, \xi_{(S, h)})$ is tight if and only if $h$ is right-veering. Moreover, the tight contact structures are Stein fillable. \end{thm}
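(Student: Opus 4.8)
The plan is to prove that the three conditions---tightness, right-veeringness, and Stein fillability---are all equivalent for periodic $h$ by closing a cycle of implications. The implication ``tight $\Rightarrow$ right-veering'' is immediate: it is exactly the theorem of Honda, Kazez and Mati\'c quoted above, and requires no periodicity. Likewise, once Stein fillability is established it yields tightness for free, since a Stein fillable contact structure is weakly fillable and hence tight by the Gromov--Eliashberg theorem. Thus the entire content of the statement reduces to a single implication that genuinely uses periodicity: \emph{if $h$ is periodic and right-veering, then $\xi_{(S,h)}$ is Stein fillable.}

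First I would unpack the structure of a periodic mapping class on a surface with connected boundary. Because $\partial S$ is connected there is a single fractional Dehn twist coefficient $c = c(h,\partial S)$, and---as recorded earlier in the paper---a periodic map is right-veering precisely when $c \geq 0$. After a free isotopy, $h$ is represented by a finite-order diffeomorphism $g$ of order $n$, and capping the boundary produces a finite-order map of the closed surface fixing a marked point. Consequently the mapping torus, and hence $M$, is Seifert fibered over the quotient orbifold $S/g$, with the binding a single regular fiber. The coefficient $c$ records exactly the boundary rotation picked up in passing from the free isotopy class of $g$ to the rel-boundary class of $h$, so that $h$ factors as a fractional boundary twist of amount $c$ composed with the rigid rotation $g$.

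With this picture in place, the heart of the argument is to identify the open book $(S,h)$, when $c > 0$, as a \emph{horizontal} (Milnor) open book for the Seifert fibration: the pages are transverse to the fibers and the monodromy is the return map of the circle action twisted by the positive amount $c$. Such open books arise as Milnor fibrations of quasi-homogeneous singularities, whose Milnor fibers furnish explicit Stein fillings; I would invoke this identification to conclude Stein fillability directly, and tightness then follows from Gromov--Eliashberg. Alternatively one obtains universal tightness from Massot's theorem on contact structures transverse to Seifert fibers, or from the Lekili--Ozbagci result that Milnor fillable structures are universally tight, as used in Example~\ref{example}. The boundary case $c = 0$ must be treated separately: here $h$ carries no fractional twisting, the open book should destabilize to the trivial disk open book up to the orbifold data, and one checks that the supported structure is the canonical Milnor fillable one.

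The main obstacle I anticipate is the explicit identification of $(S,h)$ with a Milnor open book and the production of the Stein filling in the $c > 0$ case---that is, matching the combinatorial Seifert/orbifold data $(g,c)$ to a genuine quasi-homogeneous singularity and its resolution---together with the degenerate endpoint $c = 0$, where destabilization and the precise form of the canonical contact structure must be pinned down carefully. Everything else is either cited (Honda--Kazez--Mati\'c, Gromov--Eliashberg, Massot, Lekili--Ozbagci) or formal.
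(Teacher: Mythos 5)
This statement is quoted from Colin--Honda \cite{ColinHonda13} and the paper gives no proof of it, so there is nothing internal to compare against; I will assess your proposal on its own terms. Your architecture is the right one and, as far as the published argument goes, essentially the standard one: the implication ``tight $\Rightarrow$ right-veering'' is the general Honda--Kazez--Mati\'c theorem, ``Stein fillable $\Rightarrow$ tight'' is Gromov--Eliashberg, and the real content is ``periodic and right-veering $\Rightarrow$ Stein fillable,'' which one attacks by recognizing the open book as a horizontal open book on a Seifert fibered space and appealing to Milnor fillability, exactly as this paper does in Example~\ref{example}.

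That said, two points need repair. First, your treatment of the boundary case $c=0$ is off: if $h$ is periodic with representative $g$ of order $n$ and $c(h,\partial S)=0$, then $h^n$ is freely isotopic to the identity, hence equals $\tau_{\partial S}^{n c}=\mathrm{id}$ in $Mod(S,\partial S)$; since the mapping class group of a surface with boundary rel boundary is torsion-free, $h=\mathrm{id}$, and $\xi_{(S,\mathrm{id})}$ is the standard Stein fillable structure on a connected sum of copies of $S^1\times S^2$. There is no destabilization ``up to orbifold data'' to perform---that picture is the wrong one, and as written it would not close the case. Second, and more seriously, the step you yourself flag as ``the main obstacle'' is the entire theorem: asserting that the $c>0$ open book is a Milnor open book of a quasi-homogeneous singularity requires verifying that the Seifert invariants produced by the pair $(g,c)$ actually arise from a singularity link, i.e.\ that the relevant Euler number is negative (equivalently, that the supported contact structure is \emph{positively} transverse to the fibers); not every Seifert fibered space with a horizontal open book is a singularity link, and it is precisely the sign of $c$ that must be shown to force the correct sign of $e$. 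Until that computation (or a substitute, e.g.\ exhibiting the Seifert fibered space with its transverse contact structure as the boundary of an explicit Stein domain, or citing the classification of transverse contact structures) is carried out, the proposal records a correct strategy but does not yet constitute a proof. Also note a small inaccuracy: the binding is generically an \emph{exceptional} fiber of the Seifert fibration, not a regular one, since $g$ acts on a neighborhood of $\partial S$ by a nontrivial rotation.
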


There are similar results that hold when the monodromy is pseudo-Anosov. Colin and Honda use the growth rates of the 
generators for the contact homology complex (as one increases the action) to show the following. 

\begin{thm}[Colin-Honda, 2013 {\cite[Theorem 2.3 and Corollary 2.7]{ColinHonda13}}] Let $S$ be a surface with connected boundary and $h$ a mapping class element. If $h$ is (isotopic to) a pseudo-Anosov diffeomorphism with fractional Dehn twist coefficient $k/n$, then
\begin{itemize}
\item If $k = 2$, then $(M, \xi_{(S, h)})$ is tight. 
\item If $k \geq 3$, then $(M, \xi_{(S, h)})$ is universally tight (and the universal cover of $M$ is $\R^3$). 
\end{itemize}
\end{thm}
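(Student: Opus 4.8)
The plan is to follow the contact-homology strategy indicated just above the statement: build a contact form adapted to the open book $(S,h)$ whose Reeb dynamics can be read off from the pseudo-Anosov representative, and then extract tightness (and universal tightness) from the \emph{size} of the resulting contact homology. First I would invoke the Giroux correspondence to fix $\xi = \xi_{(S,h)}$ as the contact structure supported by $(S,h)$, and then, rather than using an arbitrary supporting form, construct a specific contact form $\alpha$ following the recipe of Thurston--Winkelnkemper whose Reeb vector field $R_\alpha$ is positively transverse to the interior of every page and tangent to the binding $B$. The point of this normalization is that the closed Reeb orbits are forced into two families: the binding orbit $B$ together with its multiple covers, and the orbits arising from periodic points of $h$ on the interior of $S$. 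The pseudo-Anosov structure is exactly what makes this computable: the stable and unstable foliations pin down the linearized return map along each interior orbit, hence its Conley--Zehnder index, while the fractional Dehn twist coefficient $k/n$ (with $n$ the number of prongs of the invariant foliations at $\partial S$) controls the rotation of $R_\alpha$ around $B$ and therefore the Conley--Zehnder indices of $B$ and its iterates.

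The second step is the index bookkeeping, which is where the hypothesis on $k$ is consumed. I would compute the Conley--Zehnder index of the binding orbit and its covers as a function of $k/n$, showing that once $k\ge 2$ the binding sits in the correct grading so that it cannot serve as a ``bad'' generator or be killed in a way that collapses the complex; this is the contact-homology shadow of the Honda--Kazez--Mati\'c principle that sufficient positive twisting forces right-veering behavior. Combined with the interior orbits, whose count grows exponentially in the action because the number of periodic points of a pseudo-Anosov map grows like its dilatation, this should show that the (cylindrical or linearized) contact homology of $(M,\xi)$ is infinitely generated with exponential growth rate. The final move for the first bullet is the standard fact that an overtwisted contact structure has vanishing contact homology: since our computation produces a nonzero, indeed exponentially growing, homology, $\xi$ cannot be overtwisted, and by the tight/overtwisted dichotomy (Theorem~\ref{eliashberg-tight}) it is tight.

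For the second bullet I would upgrade nonvanishing to universal tightness by passing to covers. The key observation is that the FDTC scales in a controlled way under the finite cyclic covers of $M$ unwinding around the binding, so the hypothesis $k\ge 3$ leaves a full unit of twisting to spare and guarantees that the \emph{pulled-back} contact form still satisfies the index estimate of the previous paragraph in every finite cover. Running the same growth-rate argument upstairs shows the induced contact structure on each cover, and hence on the universal cover, has nonvanishing contact homology and is therefore tight, which is exactly universal tightness. The identification of the universal cover with $\R^3$ follows from the geometry of the underlying manifold: the mapping torus of a pseudo-Anosov map is hyperbolic, and capping off the binding with $k\ge 3$ units of twisting keeps $M$ aspherical with $\R^3$ universal cover, which one checks by analyzing the surgery on the binding orbit.

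The main obstacle will be the index computation together with the transversality needed to define the contact homology rigorously. The delicate point is not the exponential growth, which is governed by the soft dynamics of the pseudo-Anosov map, but rather the precise dependence of the Conley--Zehnder index of the binding orbit on $k/n$, since it is exactly this computation that separates the $k=2$ case (tight) from the $k\ge 3$ case (universally tight) and explains why $k=1$ is genuinely excluded. Making the holomorphic-curve count well defined, by achieving transversality or by working in a framework engineered to sidestep it, is the technical heart of the argument.
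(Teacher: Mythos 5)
First, a caveat about the ground truth: the paper does not prove this statement; it quotes it from Colin--Honda and describes the method in one sentence (``growth rates of the generators for the contact homology complex''). Your strategy for the first bullet --- a Reeb field adapted to the open book whose closed orbits are the binding plus the periodic points of the pseudo-Anosov representative, followed by the fact that overtwisted contact structures have vanishing contact homology --- is indeed the Colin--Honda strategy. But even granting the foundational issues you flag, your sketch has a gap in the nonvanishing step: exponential growth of the \emph{number of generators} does not give nonvanishing \emph{homology}; you must control the differential. The mechanism Colin--Honda use is dynamical, not index-theoretic: a pseudo-Anosov map has only finitely many (typically one) periodic orbits in each primitive free homotopy class, and the binding and its covers lie in different classes once $k\geq 2$, so in those classes the cylindrical complex has a single generator and the differential vanishes for grading/homotopy reasons. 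Your proposal leans entirely on Conley--Zehnder bookkeeping for the binding and never explains why the interior generators survive in homology, which is the actual content.

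The second bullet is where the proposal would fail. Universal tightness concerns the pullback of $\xi$ to the universal cover, which is non-compact and simply connected; the finite cyclic covers ``unwinding around the binding'' form a single tower that does not exhaust (or even cofinally approximate) the universal cover, and contact homology in the sense you are using is neither defined on the universal cover nor functorial under covering maps. The correct route, and the one Colin--Honda take, is to show that for $k\geq 3$ the adapted contact form is hypertight (no contractible Reeb orbits, after accounting for the binding), and then apply Hofer's theorem --- an overtwisted disk in any cover forces a contractible finite-energy plane, hence a contractible Reeb orbit downstairs --- which rules out overtwisted disks in \emph{every} cover at once. Likewise, the claim that ``capping off the binding with $k\geq 3$ units of twisting keeps $M$ aspherical with $\R^3$ universal cover'' is asserted rather than argued, and as stated it is close to circular; the actual source of that fact is Gabai--Oertel essential lamination theory: the stable/unstable laminations of the pseudo-Anosov survive as essential laminations of the Dehn-filled manifold when the twisting along the degeneracy locus is large enough, and essential laminations force the universal cover to be $\R^3$. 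So while your first paragraph is a fair reconstruction of the cited argument, the passage from tight to universally tight needs to be replaced wholesale.
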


We also have the following question about fractional Dehn twist coefficients. 
\begin{question}
Are the sets $FDTC_r(S)$ monoids for any $r\in (-\infty, 0)\cup (0,1)$?
\end{question}

\section{Monoids in the braid group via contact geometry}\label{bgmonoids}

As we saw in Section~\ref{ht}, there are some natural monoids in the braid group which come from generating sets, and each has a connection to the smooth topology and contact geometry of knots and links. Much like for surfaces, though, there are other monoids in the braid group coming from contact geometry and various knot homologies. In the first subsection we discuss monoids in the braid group analogous to those constructed in Section~\ref{mvcg} using contact geometry while in the following subsection we discuss using co-product operations in various homology theories to construct monoids. We thank Liam Watson for help with the foundational work of many of the ideas presented in this section. 

\subsection{Monoids in the braid group and transverse knots}
As in the Section~\ref{mvcg} if $\mathcal{P}$ is a property of a transverse knot then we can consider the subset of the braid group $B(n)$
\[
M_\mathcal{P}(n)=\{ w\in B(n): {\overline{w}} \text{ has the property } \mathcal{P}\}.
\]
We are interested in when these sets are monoids.  Some examples of such properties are given in Table~\ref{lab}.
\begin{table}[htdp]
\begin{center}
\begin{tabular}{c c}
Property $\mathcal{P}$ & Name for the subset $M_\mathcal{P}(n)$\\
\hline
Equality in slice Bennequin bound& $\Chi(n)$\\
Equality in $s$-invariant bound& $S(n)$\\
Equality in $t$-invariant bound&$T(n)$\\
Non-zero $\psi$ invariant&$\Psi(n)$\\
$k$--fold cyclic cover is Stein fillable & $Stein(n,k)$\\
$k$--fold cyclic cover is strong fillable & $Strong(n,k)$\\
$k$--fold cyclic cover is weak fillable & $Weak(n,k)$\\
$k$--fold cyclic cover is tight & $Tight(n,k)$\\
$k$--fold cyclic cover has non-zero & \\
Ozsv\'ath-Szab\'o contact invariant  & $OzSz(n,k)$\\
\hline
\end{tabular}
\end{center}
\caption{Transverse knot defined subsets of the braid group. 
}\label{lab}
\end{table}%
The precise concepts used to define the subsets above will be discussed in below. For now we determine that these are all monoids (for $T(n)$ this is true modulo a well-known and believed conjecture). 
\begin{thm}\label{braidmonoid}
\label{brmon} Let $\mathcal{P}$ be a property of braids which is preserved under transverse isotopy (of the closure), disjoint union (that is stacking the braids) and appending quasi-positive half twists. Then the subset of the braid group $B(n)$ consisting of all braids which satisfy $\mathcal{P}$ is a monoid. 
\end{thm}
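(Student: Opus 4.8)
The plan is to follow the template used for Theorem~\ref{monoidsfromcont}: a subset of a group that contains the identity and is closed under the group operation is automatically a submonoid, so it suffices to check these two things for $M_\mathcal{P}(n)=\{w\in B(n):\overline{w}\text{ has }\mathcal{P}\}$. For the identity, observe that the trivial braid $e\in B(n)$ closes to the $n$--component unlink, which is the disjoint union of $n$ standard transverse unknots. Exactly as one assumes ``$\xi_{id_S}$ satisfies the property'' in Theorem~\ref{monoidsfromcont} (and exactly as holds for every entry of Table~\ref{lab}), I take as a base case that the standard transverse unknot has $\mathcal{P}$; the disjoint union hypothesis then gives $e\in M_\mathcal{P}(n)$. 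If no braid satisfied $\mathcal{P}$ the statement would be vacuous, so this base case is the only way $M_\mathcal{P}(n)$ can be a nonempty monoid.

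The heart of the argument is closure under multiplication. Given $w_1,w_2\in M_\mathcal{P}(n)$, I would place them side by side to form the disjoint union (split) braid $w_1\sqcup w_2\in B(2n)$, whose closure is $\overline{w_1}\sqcup\overline{w_2}$; by the disjoint union hypothesis this closure has $\mathcal{P}$. The goal is then to realize $\overline{w_1w_2}$ from $\overline{w_1}\sqcup\overline{w_2}$ using only the remaining two operations, namely appending quasi-positive half-twists and transverse isotopy. The combinatorial skeleton of this is clean: the Bennequin surface of $w_1w_2$ is the Murasugi sum, along the $n$ defining disks, of the surfaces of $w_1$ and $w_2$, and passing from the disjoint surface to this sum changes the Euler characteristic by exactly $n$, so it is realized by $n$ elementary band/handle moves. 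Each such move is meant to be ``appending a quasi-positive half-twist,'' which preserves $\mathcal{P}$ by the third hypothesis applied $n$ times, and after these moves one reaches a $2n$--braid that positively destabilizes $n$ times; since positive Markov moves are transverse isotopies (Theorem~\ref{posstab}), the first hypothesis transfers $\mathcal{P}$ to $\overline{w_1w_2}$, giving $w_1w_2\in M_\mathcal{P}(n)$. A reassuring bookkeeping check comes from Lemma~\ref{braidsl}: writing $e(\cdot)$ for the exponent sum,
\[
sl(\overline{w_1\sqcup w_2})=e(w_1)+e(w_2)-2n,\qquad sl(\overline{w_1w_2})=e(w_1)+e(w_2)-n,
\]
so exactly $n$ half-twists (each raising $sl$ by one) are required, matched against the $n$ $sl$--preserving destabilizations.

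The step I expect to be the main obstacle is this realization itself, and the subtlety is real: appending quasi-positive half-twists can only raise linking numbers between the blocks, whereas $\overline{w_1w_2}$ need not be a positive modification of $\overline{w_1}\sqcup\overline{w_2}$ inside $S^3$. Consequently the identification cannot be carried out purely as transverse isotopy of links in $S^3$; it must be made at the level of the four-dimensional data that actually defines $\mathcal{P}$ (the branched cyclic cover for $Stein(n,k)$, $Strong(n,k)$, etc., or the slice surface and cobordism maps for $\Chi(n)$, $S(n)$, $T(n)$, $\Psi(n)$). This is precisely the braid-group incarnation of the proof of Theorem~\ref{monoidsfromcont}: \emph{disjoint union} plays the role of connected sum, and \emph{appending a quasi-positive half-twist} plays the role of a Legendrian surgery, so that the geometric object attached to $\overline{w_1w_2}$ is built from the object attached to $\overline{w_1}\sqcup\overline{w_2}$ by a sequence of connected sums and Legendrian surgeries. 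The essential content is therefore a single handle-attachment lemma asserting exactly this decomposition, uniformly across both families in Table~\ref{lab}; once it is established, the monoid property follows formally from the three preservation hypotheses as organized above.
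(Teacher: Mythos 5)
Your skeleton matches the paper's: the identity element is immediate, and closure under multiplication is to be established by exhibiting a braid positively Markov equivalent to $w_1w_2$ that is obtained from a braid whose closure is transversely isotopic to $\overline{w_1}\sqcup\overline{w_2}$ by appending quasi-positive half twists; even your self-linking bookkeeping ($n$ appended half twists against $n$ positive destabilizations) agrees with the actual construction. The problem is that you stop at exactly the step that constitutes the entire proof, declare that it cannot be carried out ``purely as transverse isotopy of links in $S^3$,'' and substitute an unproved four-dimensional ``handle-attachment lemma'' tailored to the particular properties in Table~\ref{lab}. That is a genuine gap, and the diagnosis behind it is wrong. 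The theorem is a formal statement about three operations on braids, so its proof must be (and in the paper is) a purely diagrammatic identity in the braid group: one does not append bands to the naive side-by-side stacking of $w_1$ and $w_2$. Instead one first rewrites the split $2n$--braid --- turn $w_2$ upside down and conjugate it by a half twist, operations which change neither the transverse isotopy class of the closure nor the fact that this closure is the disjoint union $\overline{w_1}\sqcup\overline{w_2}$ --- so that the braid contains $n$ quasi-negative bands between the two blocks. Appending $n$ quasi-positive half twists then cancels those quasi-negative bands, and the resulting $2n$--braid visibly positively destabilizes $n$ times to $w_1w_2$ (see Figure~\ref{bmonoid}; the trick is from Figure~5 of \cite{Baldwin10}).

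Your stated obstruction --- that quasi-positive bands can only raise linking numbers between the blocks while $\overline{w_1w_2}$ ``need not be a positive modification'' of the split link --- does not apply: the appended bands merely merge each strand of the first block with the corresponding strand of the second, and any negative crossings or negative linking present in $\overline{w_1w_2}$ were already present inside $w_1$ and $w_2$, which are carried along untouched. Once this diagrammatic identity is in hand, no case-by-case analysis of the entries of Table~\ref{lab} enters the proof of the theorem; the four-dimensional considerations you invoke (branched covers, Legendrian surgeries, cobordism maps) belong only to the subsequent verification that each specific $\mathcal{P}$ satisfies the three hypotheses.
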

Here we use \emph{stacking braids} to mean taking two $k$--braids $w_1$ and $w_2$ and diagrammatically putting one on top of the other to form a $2k$--braid.

\begin{proof} 
If we stack two $k$--braids $w_1$ and $w_2$ on top of each other to form a $2k$--braid, we can append $k$ quasi-positive half twists to form a braid which is positively Markov equivalent to the composition $w_1w_2$. This is illustrated in Figure~\ref{bmonoid} for the $k=3$ case. In the figure we start with $w_2$ and then turn it upside down (the closure of this braid gives the same transverse knot). In the next diagram we then conjugate $w_2$ by an a half twist and stack $w_1$ on top. We now isotope $w_1$ to the lower 3 strands. Finally we add quasi-positive bands to cancel the quasi-negative bands. This results in a braid that is positive Markov equivalent to the braid $w_1w_2$. We note the idea behind this proof comes from Figure~5 of \cite{Baldwin10} or similarly Figure~8 of \cite{BakerEtnyreVanHorn-Morris12}.
\begin{figure}[htb]
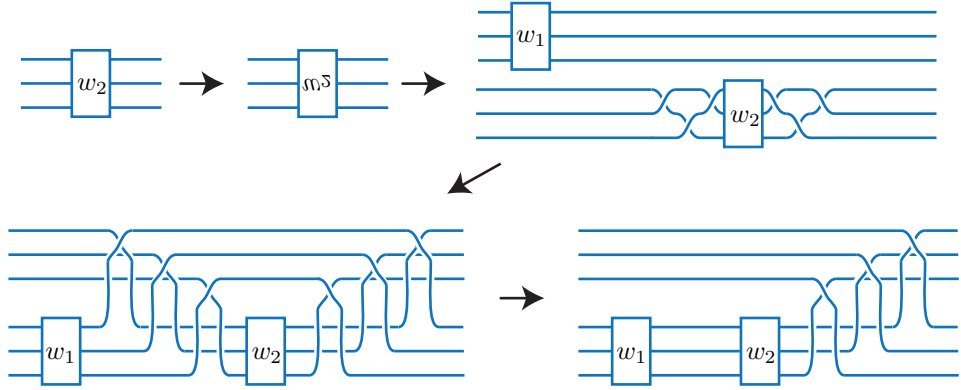

\begin{overpic}
{fig/bmonoid}
\put(26,112){$w_2$}
\put(111,116){\rotatebox{180}{\reflectbox{$w_2$}}}
\put(273,100){$w_2$}
\put(192, 130){$w_1$}
\put(14,10){$w_1$}
\put(92,10){$w_2$}
\put(230,10){$w_1$}
\put(279,10){$w_2$}
\end{overpic}
\caption{Constructing $w_1w_2$ from $w_1$ and $w_2$ using stacking and appending quasi-positive half twists.}
\label{bmonoid}
\end{figure}

\end{proof}

\subsubsection{Bennequin type inequalities} There is a braid group analogue of the question of whether $Dehn^+$ and $Stein$ are the same monoid. While any quasi-positive presentation of a braid produces a slice surface which realizes the slice-Bennequin bound (and all such surfaces can be realizes as complex plane curves), the converse is not clear and is an interesting open question. However, realizing the slice-Bennequin bound is a property which satisfies the conditions of Theorem~\ref{brmon}, and so we have the following theorem.

\begin{thm} The set of braids in $B(n)$ whose underlying transverse link $L$ satisfies the bound $$sl(L) = -\chi_4(L)$$ forms a monoid $\Chi(n)$ in $B(n)$.
\end{thm}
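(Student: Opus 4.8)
The plan is to deduce this directly from Theorem~\ref{brmon}, so that it suffices to check that the property $\mathcal{P}$, namely ``the closure $L=\overline{w}$ satisfies $sl(L)=-\chi_4(L)$'', is preserved under (i) transverse isotopy of the closure, (ii) stacking (disjoint union) of braids, and (iii) appending a quasi-positive half twist. Throughout I write $\chi_4(L)$ for the maximal Euler characteristic of a smoothly embedded surface in $B^4$ with boundary $L$ and no closed components, so that the slice-Bennequin bound of Lisca--Mati\'c and Akbulut--Matveyev reads $sl(L)\le -\chi_4(L)$ for every braid closure. Condition (i) is immediate: $sl$ is a transverse isotopy invariant by Lemma~\ref{braidsl}, while $\chi_4(L)$ depends only on the smooth link type of $L$, so the equality $sl(L)=-\chi_4(L)$ is unchanged under transverse isotopy.

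The key observation for (ii) and (iii) is that in each case the slice-Bennequin inequality supplies one half of the desired equality while an explicit surface construction supplies the other, and the two sandwich. First I would record how $sl$ changes under each operation using Lemma~\ref{braidsl}, which gives $sl(\overline{w})=\text{writhe}(w)-n$. Stacking a $k$-braid $w_1$ on a $k$-braid $w_2$ adds writhes and strand counts, so $sl(\overline{w_1}\sqcup\overline{w_2})=sl(\overline{w_1})+sl(\overline{w_2})$; and appending a quasi-positive half twist, which is a conjugate of some $\sigma_i$ and hence has exponent sum $+1$, raises the writhe by one without changing the strand count, and so raises $sl$ by exactly one.

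For (ii), suppose $w_1$ and $w_2$ both satisfy $\mathcal{P}$. Pushing maximal-Euler-characteristic surfaces for $\overline{w_1}$ and $\overline{w_2}$ into disjoint balls in $B^4$ yields a surface for the split union, whence $\chi_4(\overline{w_1}\sqcup\overline{w_2})\ge \chi_4(\overline{w_1})+\chi_4(\overline{w_2})$; combined with the additivity of $sl$ and the hypotheses on $w_1,w_2$ this gives $-\chi_4(\overline{w_1}\sqcup\overline{w_2})\le sl(\overline{w_1}\sqcup\overline{w_2})$, and slice-Bennequin gives the reverse inequality, forcing equality. For (iii), if $\overline{w}$ satisfies $\mathcal{P}$ and $w'$ is obtained by appending one quasi-positive half twist, I would attach the corresponding half-twisted band to a maximal-Euler-characteristic surface for $\overline{w}$ inside a collar $S^3\times[0,\epsilon]$ of $\partial B^4$; this produces an embedded surface for $\overline{w'}$ of Euler characteristic one less, so $\chi_4(\overline{w'})\ge \chi_4(\overline{w})-1$, whence $-\chi_4(\overline{w'})\le -\chi_4(\overline{w})+1=sl(\overline{w})+1=sl(\overline{w'})$. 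Again slice-Bennequin supplies the opposite inequality and equality follows.

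The step I expect to require the most care is verifying in (iii) that the band realizing the quasi-positive half twist can be attached to the filling surface as an embedded $1$-handle disjoint from the surface's interior; using the product collar of $S^3$ in $B^4$ this is routine, but it is the point where the geometry, rather than the bookkeeping of $sl$, actually enters. Granting these three preservation properties, Theorem~\ref{brmon} immediately yields that $\Chi(n)$ is a monoid in $B(n)$.
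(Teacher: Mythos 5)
Your proposal is correct and follows exactly the route the paper takes: the paper simply asserts that realizing the slice-Bennequin bound satisfies the three hypotheses of Theorem~\ref{brmon} and deduces the result, while you supply the (correct) verifications of those hypotheses via the sandwich between the slice-Bennequin inequality and the explicit surface constructions (disjoint balls for stacking, a band attached in a collar for a quasi-positive half twist). No gaps; your write-up just makes explicit what the paper leaves to the reader.
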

 
\begin{question} Are the monoids $QP(n)$ and {$\Chi(n)$} in $B(n)$ the same?\end{question}

Additionally, there are many natural monoids coming from any invariant of an oriented link which are related to the slice-Bennequin inequality. Below we give the examples of the concordance invariants $s$ and $\tau$ in Khovanov homology and Heegaard Floer homology respectively.

We begin with the $s$ invariant.  In \cite{Rasmussen10}, Rasmussen used Khovanov homology to define an invariant $s(K)$ of knots in $S^3$ and proved that $s(K) \leq 2g_4(K)$. For our purposes, we want an invariant which gives an upper bound on the maximal self-linking number of a knot and Plamenevskaya \cite{Plamenevskaya06} proved that for transverse knots $sl(K) \leq s(K) - 1$. Thus $s(K)-1$ improves the slice-Bennequin bound on self-linking number. Additionally, we need our invariant to behave nicely under the addition of quasipositive half twists, and the lower bound on $-\chi$ of a cobordism provides this.   

In \cite{Pardon12}, Pardon extended the definition of Rasmussen's $s$ invariant from knots to links. 
For \emph{oriented} links $L$, one can extract from this a cobordism invariant which we will call $\tilde{d}(L)$. For knots, this agrees with $s(K) - 1$ and in general, $\tilde d$ behaves nicely under oriented cobordisms. (That is, this value of $\tilde d$ changes by no more than the Euler characteristic of the interpolating surface.) 
Additionally, Plamenevskaya's proof that $sl(K) \leq s(K) - 1$  for transverse knots extends without change to yield a bound $sl(L) \leq \tilde d (L)$. Thus, $\tilde d$ gives a condition which satisfies the hypotheses of Theorem~\ref{brmon}. 

\begin{thm} The set of braids in $B(n)$ whose underlying transverse link $L$ satisfies the bound $$ sl(L) = \tilde d(L)$$ forms a monoid $S(n)$ in $B(n)$.  
\end{thm}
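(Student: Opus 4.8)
The plan is to invoke Theorem~\ref{brmon} directly, so the entire task reduces to verifying that the property ``$\overline{w}$ satisfies $sl(L)=\tilde d(L)$'' meets the three hypotheses of that theorem: invariance under transverse isotopy, behavior under stacking (disjoint union), and behavior under appending quasi-positive half twists. Since the excerpt has already assembled all the necessary inputs, the proof should be short and structural rather than computational.

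First I would record transverse-isotopy invariance. The self-linking number $sl$ is an invariant of the transverse isotopy class by Lemma~\ref{braidsl} and the discussion preceding it, and $\tilde d(L)$ is a concordance/cobordism invariant of the oriented link type, hence in particular unchanged under (ambient, and so topological) isotopy of $\overline{w}$. Therefore the equality $sl=\tilde d$ is a property of the transverse knot type, giving the first hypothesis for free.

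Next I would handle the two cobordism-type hypotheses using the general philosophy that both $sl$ and $\tilde d$ are additive/well-behaved under the relevant operations, while the inequality $sl(L)\le\tilde d(L)$ (Plamenevskaya--Pardon) holds universally. For stacking, the closure of the disjoint union is the split union of the two closures; $sl$ is additive under split union up to the same $+1$ correction that appears for connected sum (indeed $sl(\overline{w_1w_2})$ is controlled by the writhe/strand count of Lemma~\ref{braidsl}), and $\tilde d$ changes in a matching way, so equality for both factors forces equality for the union. For appending a quasi-positive half twist, the key point is that adding such a band realizes an elementary cobordism whose Euler characteristic is exactly matched by the change in $sl$ (the writhe increases by $1$), while $\tilde d$ can change by \emph{at most} the Euler characteristic of the interpolating surface; combined with the universal bound $sl\le\tilde d$, this shows that if equality held before appending then it still holds afterward. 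The clean way to phrase all of this is to say that the quantity $\tilde d(L)-sl(L)\ge 0$ is nonnegative and is nonincreasing (indeed unchanged) under exactly the moves in Theorem~\ref{brmon}, so the locus where it vanishes is closed under those moves.

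The main obstacle will be the quasi-positive-band step, where one must be careful that the inequality $sl(L)\le\tilde d(L)$ and the cobordism bound on $\tilde d$ fit together with the correct signs so that equality is preserved rather than merely the inequality. Concretely, appending a quasi-positive half twist is a band move realized by a genus-$0$ cobordism, and one must check that the Euler-characteristic bookkeeping forces the $+1$ jump in $sl$ to be exactly absorbed by $\tilde d$; this is precisely the calibration that makes the ``equality'' monoid (rather than merely an ``inequality'' monoid) well-defined. Once this is confirmed, all three hypotheses of Theorem~\ref{brmon} hold and the set $S(n)$ is a monoid.
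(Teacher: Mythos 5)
Your proposal is correct and follows essentially the same route as the paper: the paper proves this theorem simply by observing that the Plamenevskaya--Pardon bound $sl(L)\leq \tilde d(L)$ holds universally and that $\tilde d$ changes by no more than the Euler characteristic of an interpolating cobordism, so the equality locus satisfies the hypotheses of Theorem~\ref{brmon}; your ``calibration'' argument for the quasi-positive band step is exactly the intended verification. The only slip is the parenthetical suggestion that $sl$ of a split union carries the same $+1$ correction as a connected sum (by Lemma~\ref{braidsl} it is exactly additive under stacking), but since you only use that the corrections for $sl$ and $\tilde d$ match, this does not affect the argument.
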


Moreover, since $sl(L) \leq \tilde d(L) \leq -\chi_4(L)$, for each $n$ we have inclusions of monoids
$$QP(n) \subset \Chi(n) \subset S(n).$$

Now we turn to the $\tau$ invariant bound on self-linking number. Ozsv\'ath and Szab\'o \cite{OzsvathSzabo03} and Rasmussen \cite{Rasmussen03} introduced an invariant of knots in $S^3$ coming from the spectral sequence from knot Floer homology to the Floer homology of $S^3$ and show that this is a concordance invariant. Again, for convenience, $\tau$ was defined so that $\tau(K) \leq g_4(K)$ and so we define $\widetilde{\tau}(K) = 2 \tau(K) - 1$. There is a general belief (though currently no proof) that the following conjecture is true.
\begin{conj}
The invariant $\widetilde{\tau}$ can be extended to an invariant of oriented links (which behaves as expected with respect to oriented cobordisms). Additionally, $\widetilde \tau$ obeys the same self-linking number bound as it does for knots.
\end{conj}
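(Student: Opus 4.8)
\emph{A strategy for the conjecture.} The plan is to realise $\widetilde\tau$ as a filtration-level invariant in link Floer homology, patterned on its definition for knots, and then to establish the two desired properties — good behaviour under oriented cobordism and the self-linking bound — by the chain-level techniques that prove them in the knot case. First I would define $\tau(L)$ for an oriented $\ell$-component link $L$ using a multi-pointed Heegaard diagram with one basepoint pair per component. The Alexander gradings assemble into a $\Z^\ell$-filtration on $\widehat{CF}(S^3)$, whose homology is $\widehat{HF}(S^3)\otimes V^{\otimes(\ell-1)}$ with $V$ of rank two; collapsing this to a single $\Z$-filtration by the total Alexander grading, I would let $\tau(L)$ be the lowest level supporting a fixed generator of that homology. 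The crucial bookkeeping is the normalisation: setting $\widetilde\tau(L)=2\tau(L)-\ell$ reduces to $2\tau-1$ for knots and, because $sl$ is additive under split unions while $\tau$ is additive in the collapsed filtration, makes $\widetilde\tau$ additive under split union as well — exactly matching the self-linking number.

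Next I would prove the oriented cobordism inequality $|\widetilde\tau(L_1)-\widetilde\tau(L_0)|\le -\chi(\Sigma)$. Decomposing a connected oriented cobordism $\Sigma$ into births, deaths and saddle (band) moves, each elementary piece induces a filtered chain map, and one follows the distinguished generator through the composition; births and deaths shift the filtration in the controlled manner dictated by the extra rank-two factors, while the saddle is the essential case, where the knot-case estimate of Ozsv\'ath--Szab\'o \cite{OzsvathSzabo03} and Rasmussen \cite{Rasmussen03} bounds the change in $\tau$ by one. Applied to a single quasi-positive half twist, this yields an increase of $\widetilde\tau$ by at most one, matching the increase of $sl$ by exactly one; together with the transverse bound below this is precisely what the hypotheses of Theorem~\ref{brmon} require, so that $T(n)$ would indeed be a monoid. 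The self-linking bound $sl(L)\le\widetilde\tau(L)$ I would obtain by adapting Plamenevskaya's argument for the $s$-invariant \cite{Plamenevskaya06}: presenting $L$ as a closed braid, the transverse class in the filtered complex sits at an Alexander/Maslov level recording $sl$, and its survival forces the inequality.

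The main obstacle is controlling the interplay between the saddle moves and the component count, on which the $-\ell$ correction depends. A band move changes $\ell$ by $\pm1$, so the cobordism inequality must simultaneously track the shift of the distinguished generator in the collapsed filtration \emph{and} the jump in the normalising constant, and these two effects must combine to respect $-\chi(\Sigma)$ with no slack along the specific bands used in Figure~\ref{bmonoid}. Equivalently, one must understand precisely how the rank-two factor introduced (or removed) by each new (or merged) component displaces the fixed generator in the total filtration, and verify that the transverse bound remains sharp under this displacement. Making these shifts exact, rather than merely bounded, is the delicate point — and is the reason the statement is so far only a conjecture.
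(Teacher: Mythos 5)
The statement you are addressing is presented in the paper as an open conjecture, not a theorem: the authors explicitly write that ``there is a general belief (though currently no proof)'' that it holds, and the subsequent monoid statement is only asserted ``modulo above conjecture.'' So there is no proof in the paper to compare against, and what you have written is, by your own account, a strategy rather than a proof. The gap you name at the end is real and is precisely the obstruction: after collapsing the $\Z^\ell$-filtration, the homology is $\widehat{HF}(S^3)\otimes V^{\otimes(\ell-1)}$, which has rank $2^{\ell-1}$, so ``the lowest level supporting a fixed generator'' is not a single well-defined number until you specify which generator --- the top and bottom generators of the $V$-factors give genuinely different invariants (a $\tau_{\max}$ and a $\tau_{\min}$), and one must check that a single choice simultaneously satisfies the cobordism inequality with the correct normalisation \emph{and} the self-linking bound. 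A saddle move changing $\ell$ by $\pm 1$ changes both the rank of the ambient homology and the correction term $-\ell$, and without pinning down exactly how the chosen generator moves under the induced filtered map, neither inequality follows. Until that bookkeeping is carried out, the conjecture remains open, and your write-up does not close it.

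Two further cautions on the ingredients you invoke. First, the self-linking bound for $\tau$ in the knot case is not literally Plamenevskaya's Khovanov-homology argument for $s$; it rests on knot-Floer-specific input (the behaviour of the Alexander filtration for transverse braid closures, or the Legendrian/transverse invariants in $HFK$), so ``adapting Plamenevskaya's argument'' needs to be replaced by the correct Floer-theoretic mechanism, and one must verify it survives the passage to the collapsed multi-filtration. Second, additivity of the collapsed-filtration $\tau$ under split union is itself a claim requiring proof, since the split union introduces extra $V$-factors into the homology and the distinguished generator must be tracked through a connected-sum/K{\"u}nneth argument. Your overall architecture --- collapsed Alexander filtration, normalisation $2\tau(L)-\ell$, decomposition of cobordisms into elementary pieces --- is the natural one and is consistent with how such extensions have been pursued, but as submitted it is an outline of a program, not a proof of the statement.
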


\begin{mthm} The set of braids in $B(n)$ whose underlying oriented transverse link satisfies $$sl(L) = \widetilde\tau(L)$$ forms a monoid $T(n)$ in $B(n)$. 
\end{mthm}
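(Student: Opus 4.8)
The plan is to deduce the statement directly from Theorem~\ref{brmon}, in exactly the way the monoids $\Chi(n)$ and $S(n)$ were obtained from $-\chi_4$ and $\tilde d$. Set $\mathcal{P}$ to be the property that the closure $L=\overline{w}$ satisfies $sl(L)=\widetilde\tau(L)$. Granting the conjecture, I may use two inputs: the universal bound $sl(L)\le\widetilde\tau(L)$ for every transverse link, and the fact that $\widetilde\tau$ changes by at most the (negative) Euler characteristic of any interpolating oriented cobordism. It then remains only to verify that $\mathcal{P}$ is preserved under the three operations in Theorem~\ref{brmon}. The unit is immediate: the trivial $n$--braid closes to the $n$--component unlink, for which $sl=-n$, while the unknot gives $sl=-1=2\cdot 0-1=\widetilde\tau$ and both invariants are additive under split union, so $\widetilde\tau=-n$ as well.

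Transverse isotopy invariance is automatic, since $sl$ is a transverse invariant and $\widetilde\tau$ depends only on the underlying smooth link type, so the equality $sl=\widetilde\tau$ is unaffected. For the disjoint union (stacking) of $w_1,w_2\in B(n)$, Lemma~\ref{braidsl} shows that $sl$ is additive, $sl(L_1\sqcup L_2)=sl(L_1)+sl(L_2)$, because stacking adds both writhes and strand numbers; and the conjectured behaviour of $\widetilde\tau$ makes it additive under split union, exactly as $-\chi_4$ and $\tilde d$ are. Hence if $sl(L_i)=\widetilde\tau(L_i)$ for $i=1,2$, the two sums agree and $\mathcal{P}$ passes to $L_1\sqcup L_2$.

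The key step is appending a quasi-positive half twist, i.e.\ multiplying $w$ by a conjugate $a\sigma_i a^{-1}$ of a positive generator to obtain $w'$. Such a conjugate has exponent sum $+1$ and leaves the strand number fixed, so Lemma~\ref{braidsl} gives $sl(\overline{w'})=sl(\overline{w})+1$; moreover the band move realizing this is an oriented saddle cobordism with $-\chi=1$, so the conjecture yields $\widetilde\tau(\overline{w'})\le\widetilde\tau(\overline{w})+1$. Combining these with the universal bound and the hypothesis $sl(\overline{w})=\widetilde\tau(\overline{w})$ gives the squeeze
\[
\widetilde\tau(\overline{w})+1=sl(\overline{w'})\le\widetilde\tau(\overline{w'})\le\widetilde\tau(\overline{w})+1,
\]
forcing $sl(\overline{w'})=\widetilde\tau(\overline{w'})$, so $\mathcal{P}$ is preserved. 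Applying Theorem~\ref{brmon} then produces the monoid $T(n)$.

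I expect essentially all of the genuine difficulty to be displaced into the conjecture itself, namely constructing an oriented-link invariant $\widetilde\tau$ with the stated cobordism monotonicity and proving the self-linking bound $sl\le\widetilde\tau$. Modulo those two facts the argument is purely formal: the only place where geometry is really used is the quasi-positive half twist step, where the cobordism bound on $\widetilde\tau$ must be combined with the exact $+1$ jump of $sl$ so that the two squeeze to equality, and the remaining checks reduce to additivity and invariance.
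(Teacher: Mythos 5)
Your argument is correct and is exactly the deduction the paper intends: the paper defines $T(n)$ by the same mechanism as $\Chi(n)$ and $S(n)$, namely checking that the property $sl(L)=\widetilde\tau(L)$ meets the hypotheses of Theorem~\ref{brmon} (invariance under transverse isotopy, split union via sub/additivity of $sl$ and $\widetilde\tau$, and the squeeze between the exact $+1$ jump of $sl$ and the conjectured cobordism bound on $\widetilde\tau$ under a quasi-positive band). Your write-up simply makes explicit the verification the paper leaves implicit, with the genuine content deferred to the conjecture, exactly as the paper indicates.
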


Whenever $\tau$ does get extended to oriented links, it is expected that it  will still respect the slice genus bound $\widetilde{\tau} (L) \leq -\chi_4(L)$ and the self-linking number bound $sl(L) \leq \widetilde{\tau}(L)$ and so one would have an inclusions of monoids
$$QP(n) \subset \Chi(n) \subset T(n).$$

\begin{question} Assuming the conjecture above, are the monoids $S(n)$ and $T(n)$ distinct? There are examples of knots where $s$ and $2\tau$ do not agree \cite{HeddenOrding08}, which leaves open the possibility that they might indeed be distinct.\end{question}

\subsubsection{Non-vanishing of transverse invariants}
We now turn to Plamenevskaya's transverse invariant $\psi$ of transverse links in Khovanov homology. In \cite{Plamenevskaya06}, Plamenevskaya introduced a class $\psi(L)$ of a transverse link $L$ in the Khovanov Homology of the link $KH(L)$. The class is determined by a braid diagram $w$ representing a transverse link $L$. 
The invariant behaves functorially under the removal of quasi-positive half twists via crossing resolution (see Theorem~4 in \cite{Plamenevskaya06}) and so the non-vanishing of the invariant is preserved by the addition of quasi-positive half twists. The non-vanishing of the invariant is also clearly preserved under disjoint union. Thus by Theorem~\ref{braidmonoid} we have the following theorem. 

\begin{thm} 
The subset of the braid group $B(n)$ yielding transverse links whose corresponding $\psi$ invariant is nonzero forms a monoid in the braid group. We denote this monoid $\Psi(n)$.
\end{thm}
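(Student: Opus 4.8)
The plan is to deduce this from the general criterion of Theorem~\ref{braidmonoid}: since $\psi$ is defined from a braid diagram for a transverse link, it suffices to verify that the property ``$\psi(\overline{w})\neq 0$'' is preserved under (i) transverse isotopy of the closure, (ii) stacking of braids (disjoint union of the closures), and (iii) appending a quasi-positive half twist. Once all three are checked, Theorem~\ref{braidmonoid} immediately yields that $\Psi(n)$ is a monoid.

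Invariance under transverse isotopy is the easiest step and is essentially built into the construction: Plamenevskaya proved in \cite{Plamenevskaya06} that the class $\psi$ is a transverse invariant, so its vanishing or non-vanishing depends only on the transverse isotopy class of $\overline{w}$ and not on the chosen braid representative. Thus hypothesis (i) holds automatically.

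The step I expect to be the crux is (iii), the behaviour under appending a quasi-positive half twist, and here I would invoke the functoriality established in Theorem~4 of \cite{Plamenevskaya06}. Resolving a quasi-positive crossing induces a cobordism map $f\colon KH(\overline{w'})\to KH(\overline{w})$ on Khovanov homology, where $w'$ is obtained from $w$ by appending the half twist, and this map sends $\psi(\overline{w'})$ to $\psi(\overline{w})$. Consequently, if $\psi(\overline{w})\neq 0$ then $\psi(\overline{w'})$ cannot vanish, since $f$ would otherwise send it to $0$; this is exactly the statement that non-vanishing is preserved under \emph{adding} quasi-positive half twists. The only care required is to keep the direction of the cobordism map straight, so that non-vanishing is pulled back rather than pushed forward.

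Finally, for (ii) I would use the monoidal structure of Khovanov homology under disjoint unions. The closure of the stacked braid is the split union $\overline{w_1}\sqcup\overline{w_2}$, its Khovanov complex is the tensor product of the complexes of the factors, and the distinguished cycle representing $\psi$ is the tensor product of the two distinguished cycles, so that $\psi(\overline{w_1}\sqcup\overline{w_2})=\psi(\overline{w_1})\otimes\psi(\overline{w_2})$ via the K\"unneth isomorphism. Working over a field (or otherwise tracking that $\psi$ is represented by a primitive generator), a tensor product of two non-zero classes is non-zero, so non-vanishing is preserved under disjoint union. With (i)--(iii) in hand, Theorem~\ref{braidmonoid} applies and $\Psi(n)$ is a monoid.
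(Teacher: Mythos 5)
Your proposal is correct and follows essentially the same route as the paper: both verify the hypotheses of Theorem~\ref{braidmonoid} by citing transverse invariance of $\psi$, the functoriality under resolving quasi-positive crossings from Theorem~4 of \cite{Plamenevskaya06} (with the cobordism map pulling non-vanishing back, exactly as you note), and preservation under disjoint union. Your K\"unneth elaboration of the disjoint-union step is more detailed than the paper, which simply asserts it is clear, but it is the same argument.
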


Plamenevskaya's work avoided the language of monoids but did show that all quasi-positive braids have nonzero $\psi$ invariant. In particular, the set of quasi-positive braids forms a a submonoid $$QP(n) \subset \Psi(n).$$

\begin{question} Is $\Chi(n) \subset \Psi(n)$? Are $S(n)$ and $\Psi(n)$ related?\end{question} 

\subsubsection{Cyclic branched covers} Given a transverse knot $T$ in any contact manifold $(M,\xi)$ then it is well known, and each to verify, that there is a contact structure $\widetilde{\xi}$ induced on any branched cover $\widetilde{M}$ of $M$ over $T$. We will consider transverse knots in $(S^3,\xi_{std})$ that come as the closers of braids $w$. Moreover, since the only covers one is always guaranteed to have are cyclic covers we restrict attention to these. We will denote the $n$--fold cyclic cover of $(S^3,\xi_{std})$ branched over the closure of the braid $w$ by $(S^3(w, n),\xi(w,n))$. 

One may easily check that if we are given two $k$--braids $w_1$ and $w_2$ and denote the result of stacking $w_2$ on top of $w_1$ by $w$ then $(S^3(w, n),\xi(w,n))$ is obtained from $(S^3(w_1, n),\xi(w_1,n))$ and $(S^3(w_2, n),\xi(w_2,n))$ by an $n$--fold connected sum (that is connected sum followed by $n-1$ internal connected sums). Moreover appending a quasi-positive half twist to a braid $w$ changes the $n$--fold branched cover by $n-1$ Legendrian surgeries. This is easily checked for double covers and verified in general in \cite{HarveyKawamuroPlamenevskaya09}.

This coupled with the discussion in Section~\ref{mvcg} implies that all the properties that give monoids in that section also give monoids in the braid group via branched coverings. For example the property defining the monoid $Tight(S)$ in $Map(S,\partial S)$ gives the monoid $Tight(n,k)$ in the braid group $B(n)$ consisting of braids whose closures give transverse links that induce tight contact structures on the $k$--fold cyclic cover. For each fixed $n$ and $k$ we have the same set of inclusions as discussed in Section~\ref{mvcg}. In addition \cite{HarveyKawamuroPlamenevskaya09} shows that 
\[
QP(n)\subset Dehn^+(n,k)
\]
for all $k$ and $n$. 
\begin{question} Is $QP(n)= Dehn^+(n,k)$ for any $n$ and $k$? \end{question}
The answer is almost certainly no, but for small $n$ and $k$ it might be true. (For example for $n=2$ and $k=2$ the answer is YES, but this might be the only such case with a positive answer.)

One may also easily see that for any property defining these monoids adding a strand preserves the property of the cover so we have, for example, $Tight(n,k)\subset Tight(n+1,k)$, and similarly for the other properties. 
\begin{question} Is there a relation between $Tight(n,k)$ and $Tight(n,k+1)$? What is the relation between $Tight(n,k)$ and $Tight(n+1,k)$? and similarly for the other branched cover monoids. \end{question}

There has been work relating $OzSz(n,2)$ to $\Psi(n)$, \cite{Baldwin11, BaldwinPlamenevskaya10}, but currently the exact relation is unknown. 
\begin{question} What is the relation between $OzSz(n,2)$ to $\Psi(n)$? Is $\Psi(n)\subset OzSz(n,2)$? \end{question}

\subsection{Co-products and monoids}
Given a homology theory that contains a transverse knot invariant and a co-product operation on the homology we can construct monoids as is illustrated below. We will see that these seem to be refinements of various monoids constructed above.

Recall that in \cite{OzsvathSzaboThurston08} Ozsv\'ath, Szab\'o and Thurston defined an invariant $\theta$ of transverse knots that lives in the (grid diagram formulation of) link Floer homology $HKL^-(m(L))$ of the mirror image of the link $L$. 
In \cite{Baldwin10}, Baldwin proved there is a comultiplication map on the link Floer homology of closures of braids
\[
\mu:HFL^-(m(\overline{w_1w_2})) \to HFL^-(m(\overline{w_1}\# \overline{w_1})),
\]
that respects the $\theta$ invariant. That is $\mu(\theta(\overline{w_1w_2}))=\theta(\overline{w_1}\# \overline{w_1})$. There are similar results for a stabilized version of the hat-invariant, $\widehat\theta$. It is known \cite{Vertesi08} that $\widehat\theta(\overline{w_1}\# \overline{w_1}$ is non-zero if and only if both $\widehat\theta(\overline{w_1})$ and $\widehat\theta(\overline{w_1})$ are both non-zero. This leads to the following result.

\begin{thm}[Baldwin 2010 \cite{Baldwin10}] Let $L_1$ and $L_2$ be transverse link with braid representatives $w_1$ and $w_2$ of the same braid index and define $L$ to be the transverse link represented by the braid $w_1 w_2$. If $\widehat\theta(L_1)$ and $\widehat\theta(L_2)$ are both non-zero, then $\widehat\theta(L)$ is also non-zero.\end{thm}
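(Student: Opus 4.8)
The plan is to deduce the result by combining the two black boxes stated just above — Baldwin's $\widehat\theta$-respecting comultiplication map and Vértesi's connected-sum formula for $\widehat\theta$ — via an elementary functoriality argument. The guiding idea is that a homomorphism cannot carry a zero class to a nonzero element, so once we know the image of $\widehat\theta(L)$ under the comultiplication is nonzero, we may immediately conclude that $\widehat\theta(L)$ itself is nonzero. Thus the whole argument reduces to chaining three implications together, and no calculation with the complexes themselves is needed.

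First I would record the hat version of Baldwin's comultiplication. Since $L$ is represented by the product braid $w_1w_2$, there is a map
\[
\widehat\mu : \widehat{HFL}(m(\overline{w_1 w_2})) \to \widehat{HFL}(m(\overline{w_1}\,\#\,\overline{w_2}))
\]
satisfying $\widehat\mu(\widehat\theta(L)) = \widehat\theta(L_1 \# L_2)$; this is the $\widehat\theta$ analogue of the stated formula $\mu(\theta(\overline{w_1 w_2})) = \theta(\overline{w_1}\#\overline{w_2})$ for the minus version. Second, I would invoke the connected-sum formula of Vértesi in the single direction I need: if both $\widehat\theta(L_1)$ and $\widehat\theta(L_2)$ are nonzero, then $\widehat\theta(L_1 \# L_2)$ is nonzero. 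Under the hypothesis of the theorem this yields $\widehat\theta(L_1 \# L_2) \neq 0$.

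The final step is the functoriality argument. Suppose toward a contradiction that $\widehat\theta(L) = 0$. Since $\widehat\mu$ is a homomorphism it sends $0$ to $0$, so $\widehat\theta(L_1 \# L_2) = \widehat\mu(\widehat\theta(L)) = \widehat\mu(0) = 0$, contradicting the previous step. Hence $\widehat\theta(L) \neq 0$, as claimed. The genuinely hard content — and the main obstacle, were one proving this from scratch rather than citing it — is the construction of the comultiplication map $\widehat\mu$ and the verification that it intertwines the transverse invariants: one must realize the closure of the product braid and the connected sum on a common (grid or sutured) diagram and produce a chain map carrying the distinguished cycle for $\overline{w_1w_2}$ to that for $\overline{w_1}\#\overline{w_2}$. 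That work is precisely Baldwin's theorem, which I would take as given; by contrast, the combination carried out here is purely formal.
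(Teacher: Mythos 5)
Your argument is correct and is essentially the same one the paper sketches in the paragraph preceding the theorem: apply the hat version of Baldwin's comultiplication, use V\'ertesi's connected-sum criterion to see $\widehat\theta(L_1\#L_2)\neq 0$, and conclude by the fact that a linear map sends $0$ to $0$. (You have also silently corrected the paper's typo $\overline{w_1}\#\overline{w_1}$ to $\overline{w_1}\#\overline{w_2}$.)
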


This obviously leads to a monoid by considering braids whose closures have non-zero $\widehat\theta$ invariants.

\begin{cor} The subset of the braid group $B(n)$ yielding transverse links whose corresponding $\widehat\theta$ invariant is non-zero forms a monoid in the braid group which we denote $\Theta(n)$.\end{cor}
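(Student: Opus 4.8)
The plan is to verify directly that $\Theta(n)$ satisfies the three defining properties of a monoid: associativity, closure under the group multiplication, and existence of a two-sided identity. Associativity is inherited verbatim from the ambient group $B(n)$ and requires no argument. The real content is closure, and here the preceding theorem of Baldwin \cite{Baldwin10} does essentially all of the work: given $w_1, w_2 \in \Theta(n)$, both are braids on $n$ strands and hence have the same braid index, and by the definition of $\Theta(n)$ we have $\widehat\theta(\overline{w_1}) \neq 0$ and $\widehat\theta(\overline{w_2}) \neq 0$. Baldwin's theorem then yields $\widehat\theta(\overline{w_1 w_2}) \neq 0$, that is $w_1 w_2 \in \Theta(n)$.

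It remains to locate the identity. The unit of $B(n)$ is the trivial $n$--braid $e$, whose closure $\overline{e}$ is the $n$--component transverse unlink of maximal self-linking number. I would show $e \in \Theta(n)$ by observing that $e$ is quasi-positive (it is the empty positive word in the band generators, so $e \in QP(n)$) together with the non-vanishing of $\widehat\theta$ on quasi-positive braids, the $\widehat\theta$--analogue of Plamenevskaya's non-vanishing result for $\psi$ and of the containment $QP(n)\subset\Psi(n)$ noted above. Granting this, $e \in \Theta(n)$, and all three axioms are in place.

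Alternatively, one can route the argument through the general criterion of Theorem~\ref{braidmonoid}: non-vanishing of $\widehat\theta$ is manifestly invariant under transverse isotopy of the closure, is preserved under disjoint union by the cited result of V\'ertesi \cite{Vertesi08} (that $\widehat\theta$ of a connected sum is non-zero exactly when both summands have non-zero $\widehat\theta$), and is preserved under appending quasi-positive half twists by the functoriality of Baldwin's comultiplication map on link Floer homology. Each hypothesis of Theorem~\ref{braidmonoid} is then met, again giving the monoid structure.

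The only genuinely delicate point is the base case: confirming that $\widehat\theta$ does not vanish on the unit, equivalently that $QP(n)\subset\Theta(n)$. I expect this to be the main obstacle to a fully self-contained proof, since it rests on a grading and non-vanishing computation for the transverse invariant of the unlink rather than on the formal monoid bookkeeping, which is otherwise routine.
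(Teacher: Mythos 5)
Your argument is correct and matches the paper's, which simply observes that the corollary follows immediately from Baldwin's theorem: closure under multiplication is exactly the content of that theorem, and the remaining axioms are formal. Your extra care in checking that the trivial braid lies in $\Theta(n)$ (via $QP(n)\subset\Theta(n)$ and the non-vanishing of the transverse invariant for the maximal self-linking unlink) is a point the paper leaves implicit, but it does not change the route.
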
 

\begin{question} Are the monoids $\Psi(n)$ and $\Theta(n)$ distinct?\end{question}

\section{Studying monoids using contact geometry}\label{contact2studymonoid}

In this section we will show how some monoids in the braid group, that are not {\em a priori} related to contact geometry can be studied using using contact geometry. 

We start with a result about transverse knots.
\begin{thm}[Etnyre and Van Horn-Morris. 2010 \cite{EtnyreVanHornMorris10}]\label{uniquemax}
Let $K$ be a fibered knot in $S^3$ that is also the closure of a strongly quasi-positive braid and let $\Sigma$ be the associated Seifert surface (built as in Section~\ref{ht} form a strongly quasi-positive braid representing $K$). Then there is a unique transverse knot $T$ in the standard contact structure $\xi_{std}$ on $S^3$ that is topologically isotopic to $K$ and with $sl(T)=-\chi(\Sigma)$. 
\end{thm}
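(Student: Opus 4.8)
The plan is to realize the maximal self-linking number $sl(T) = -\chi(\Sigma)$ as a contact-geometric extremality condition and then use the Giroux correspondence together with the right-veering/FDTC technology to rule out the existence of two distinct transverse representatives at the top of the Bennequin bound. First I would set up the fibered structure: since $K$ is fibered, its Seifert surface $\Sigma$ is a page of an open book $(\Sigma, \phi)$ for $S^3$, and since $K$ is strongly quasi-positive with $\Sigma$ the associated surface, the slice-Bennequin inequality (Theorem of Lisca--Mati\'c and Akbulut--Matveyev) together with the computation $\frac{sl(\overline{w})+1}{2} = g(\widetilde\Sigma_w)$ from Section~\ref{ht} shows $-\chi(\Sigma)$ is exactly the maximal self-linking number achievable in this knot type. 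The key observation is that a transverse knot $T$ with $sl(T) = -\chi(\Sigma)$ is precisely a transverse representative that is \emph{braided about the open book $(\Sigma,\phi)$ with no negative bands}, i.e.\ the transverse push-off of the binding of a compatible open book.

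The main step is to translate ``maximal self-linking'' into ``binding of an open book.'' I would argue that any transverse $T$ topologically isotopic to $K$ and satisfying $sl(T) = -\chi(\Sigma)$ must be the transverse binding of an open book whose page is $\Sigma$ and whose monodromy $\psi$ is right-veering (by Theorem of Honda--Kazez--Mati\'c, since the supported contact structure must be $\xi_{std}$, which is tight). Here the fiberedness is essential: because $K$ is fibered, the complement fibers in a unique way, so the page $\Sigma$ is topologically determined, and the only freedom is in the monodromy $\psi$, which must induce the same total monodromy data as $\phi$ up to the ambiguity measured by the fractional Dehn twist coefficient. The uniqueness of $T$ then reduces to showing that there is a \emph{unique} right-veering monodromy on the fixed page $\Sigma$ compatible with $(S^3, \xi_{std})$ and realizing the maximal $sl$; equivalently, that the fractional Dehn twist coefficient of the monodromy at the binding is forced. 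Since $\xi_{std}$ is the \emph{unique} tight contact structure on $S^3$, and strongly quasi-positive fibered knots give open books whose monodromy is a product of positive Dehn twists (hence right-veering with controlled FDTC), the extremal $sl$ condition pins down the monodromy up to isotopy, and distinct transverse representatives would give two genuinely different right-veering stabilizations supporting the same tight $\xi_{std}$ on the same page, which the Giroux correspondence forbids.

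The hard part will be the passage from the \emph{numerical} extremality $sl(T) = -\chi(\Sigma)$ to the \emph{structural} statement that $T$ is the binding of an open book with page exactly $\Sigma$: a transverse knot with maximal self-linking need not a priori be a fibered binding, and one must use both the fibered hypothesis and the equality in the (slice-)Bennequin bound to promote $T$ to such a binding. I expect this to require a careful analysis of the Bennequin-surface realizing $sl(T)$ and an argument that, because $\chi$ of this surface matches $\chi(\Sigma)$ and $K$ is fibered, the surface \emph{is} a fiber (up to isotopy), so that $T$ bounds a fiber page and the open book structure is recovered. Once $T$ is identified as a binding, the uniqueness is a comparatively formal consequence of the uniqueness of the tight contact structure on $S^3$ and the Giroux correspondence; the genuine obstacle is establishing that the extremal transverse knot must be fibered-compatible in the first place. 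A secondary subtlety is verifying that no two distinct such bindings can coexist, which I would handle by the FDTC quasi-morphism: both monodromies must have the same fractional Dehn twist coefficient (dictated by $sl$), and on a fixed page this coefficient together with the right-veering condition determines the isotopy class.
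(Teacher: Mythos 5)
First, a point of order: the survey states Theorem~\ref{uniquemax} as an imported result from \cite{EtnyreVanHornMorris10} and gives no proof of it here, so there is no in-paper argument to compare against; your proposal has to be measured against the actual proof in that reference, which proceeds by characteristic-foliation analysis on the fiber surface (equality in the Bennequin bound forces, after Giroux elimination, a foliation on the fiber with no negative elliptic and no positive hyperbolic points, which exhibits the fiber as a page and $T$ as the binding of an open book supporting $\xi_{std}$), followed by a uniqueness statement for the contact structure on the fibered complement. Your proposal correctly locates the crux --- passing from the numerical condition $sl(T)=-\chi(\Sigma)$ to the structural statement that $T$ is transversely the binding of the fibration --- but it does not actually supply an argument for that step; ``I expect this to require a careful analysis of the Bennequin surface'' is precisely the content of the theorem, and the existence half (that the binding of the open book of a fibered strongly quasi-positive knot realizes $sl=-\chi(\Sigma)$, i.e.\ that this open book supports $\xi_{std}$) is itself a nontrivial theorem (Hedden) that you use implicitly.

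The uniqueness mechanism you propose is also not correct as stated. The Giroux correspondence does \emph{not} forbid two non-isotopic monodromies on the same page from supporting the same contact structure --- it only asserts a common positive stabilization, which changes the page --- so ``two genuinely different right-veering stabilizations supporting the same tight $\xi_{std}$ on the same page, which the Giroux correspondence forbids'' is not a valid step. What actually pins down the monodromy here is purely topological: a fibered knot complement in $S^3$ fibers uniquely up to isotopy, so there is only one open book with binding topologically $K$ and page $\Sigma$; the real question is whether a transverse representative $T$ achieving the bound is \emph{transversely} isotopic to that binding, and this is exactly the step your outline leaves open. Likewise, the closing claim that the fractional Dehn twist coefficient together with right-veering determines the isotopy class of the monodromy is false (many distinct right-veering classes share an FDTC), so it cannot substitute for the missing argument. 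In short: the skeleton identifies the right objects, but both the central implication and the uniqueness step are asserted rather than proved, and the tools invoked for uniqueness do not do what is claimed.
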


We now note several consequences of this result. First we state a ``quasi-positive recognition" result. 
\begin{cor}\label{cor1}
Let $K$ be a fibered, strongly quasi-positive knot in $S^3$ then a braid $b$ whose closure is $K$ is quasi-positive if and only if $a(b)=n(b)-\chi(K)$. (Here we denote by $a(b)$ the algebraic length of $b$, which was also called writhe earlier, and $n(b)$ is the braid index of $b$, that is the number of strands of $b$.)
\end{cor}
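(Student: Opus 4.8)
The plan is to reduce both implications to a single self-linking statement and then feed it into Theorem~\ref{uniquemax}. First I would translate the arithmetic condition into transverse-geometric terms: by Lemma~\ref{braidsl} we have $sl(\overline b) = a(b) - n(b)$, so the equality $a(b) = n(b) - \chi(K)$ says exactly that the transverse knot $\overline b$ in $(S^3,\xi_{std})$ satisfies $sl(\overline b) = -\chi(\Sigma)$, i.e.\ that $\overline b$ realizes the maximal self-linking number among transverse representatives of $K$. This number is sharp precisely because $K$ is strongly quasi-positive: then $g_3(K)=g_4(K)$ and the slice-Bennequin bound of the Lisca--Mati\'c/Akbulut--Matveyev theorem is attained by $\Sigma$.

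For the forward direction, suppose $b$ is quasi-positive. Then the ribbon surface $\widetilde\Sigma_b$ built from $b$ realizes the $4$-ball genus (the proposition on quasi-positive surfaces just above), and a handle count gives $-\chi(\widetilde\Sigma_b) = a(b) - n(b) = sl(\overline b)$, so $sl(\overline b) = -\chi_4(\overline b) = 2g_4(K)-1$. Since $K$ is strongly quasi-positive, $g_4(K) = g_3(K)$ and $-\chi(\Sigma) = 2g_3(K)-1$, whence $sl(\overline b) = -\chi(\Sigma)$ and the length equality holds. Note this direction does not even require Theorem~\ref{uniquemax}.

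The backward direction is the crux. Assume $sl(\overline b) = -\chi(\Sigma)$. The strongly quasi-positive braid $\beta$ used to build $\Sigma$ is in particular quasi-positive, has closure $K$, and (since $\Sigma = \widetilde\Sigma_\beta$ is embedded) satisfies $sl(\overline\beta) = -\chi(\Sigma)$. Thus both $\overline b$ and $\overline\beta$ are transverse representatives of $K$ attaining the maximal self-linking number $-\chi(\Sigma)$, so by the uniqueness in Theorem~\ref{uniquemax} they are transversely isotopic, $\overline b \simeq \overline\beta$. The transverse Markov theorem (Theorem~\ref{posstab}) then shows $b$ and $\beta$ are related by a sequence of conjugations and positive (de)stabilizations. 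I would finish by propagating quasi-positivity along this chain: conjugation preserves membership in $QP(n)$ since $QP(n)$ is normally generated by the $\sigma_i$ (a conjugate of a product of conjugates of generators is again such a product), and both positive stabilization and destabilization preserve quasi-positivity by Orevkov's Theorem~\ref{qpandstab}. As $\beta$ is quasi-positive and every move from $\beta$ to $b$ preserves this property, $b$ is quasi-positive.

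The main obstacle is this backward direction: everything hinges on using Theorem~\ref{uniquemax} to upgrade the numerical self-linking equality into an honest transverse isotopy $\overline b \simeq \overline\beta$, after which the combinatorial transport of quasi-positivity via Theorems~\ref{posstab} and~\ref{qpandstab} is routine. The subtle bookkeeping point is to confirm that the two sharpness inputs --- the slice-Bennequin bound realized by the ribbon surface on the quasi-positive side and the minimal-genus Seifert surface $\Sigma = \widetilde\Sigma_\beta$ on the strongly quasi-positive side --- compute the \emph{same} value $-\chi(\Sigma)$, which is exactly what makes the uniqueness theorem applicable to both $\overline b$ and $\overline\beta$.
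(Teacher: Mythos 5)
Your proof is correct and follows essentially the same route as the paper: the crucial backward direction is identical (the self-linking equality feeds into Theorem~\ref{uniquemax} to get a transverse isotopy to the strongly quasi-positive representative, then Theorem~\ref{posstab} and Theorem~\ref{qpandstab}, together with conjugation-invariance of quasi-positivity, transport quasi-positivity to $b$). Your forward direction packages the same ingredients --- the quasi-positive ribbon surface realizing the four-ball genus, the equality $g_3=g_4$ for strongly quasi-positive knots, and the Bennequin bound --- slightly differently from the paper's normalize-and-sandwich argument, but the content is the same.
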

\begin{proof}
By hypothesis there is a word $w$ in the strongly quasi-positive generators of $SQP(n)$ such that its closure $\overline{w}$ is topologically isotopic to $K$. Moreover $\Sigma=\widetilde{\Sigma}_w$ (where $\widetilde{\Sigma}_w$ is the Seifert surface for $\overline{w}$ constructed in the Section~\ref{monoidsgs}) and one easily sees that
\[
\chi(\widetilde{\Sigma}_w)= n(w)-a(w)
\]
and of course as discussed above $\chi(K)=\chi(\overline{w})=\chi(\widetilde{\Sigma}_w)$. 
Also $\overline{w}$ is a transverse knot with self-linking $sl(\overline{w})=-\chi(\widetilde{\Sigma}_w)$ by Lemma~\ref{braidsl}. 

Now let $w'$ be some other braid word whose closure represents $K=\overline{w'}$. If we assume that 
\[
a(w')=n(b')-\chi(K)
\]
then we see that the transverse knot $\overline{w'}$ has self-linking $\sl(\overline{w'})=-\chi(\widetilde{\Sigma}_w)$. So by Theorem~\ref{uniquemax} we see that $\overline{w}$ and $\overline{w'}$ are transversely isotopic. Now  Theorem~\ref{posstab} says that $w$ and $w'$ are related by positive Markov moves and Theorem~\ref{qpandstab} then says that $w'$ must be quasi-positive. 

We now conversely assume that $w'$ is a quasi-positive braid representing $K$. 
Let $\Sigma'$ be the ribbon surface constructed from the quasi-positive braid $w'$ as in Section~\ref{monoidsgs}. As before we know that $\chi(\Sigma')=n(w')-a(w')$. Since positive stabilization preserves quasi-positivity and strong quasi-positivity we can assume that $n(w)=n(w')$. Moreover since all strongly quasi-positive generators are quasi-positive generators too we see that $a(w')\leq a(w)$. Now we see that
\[
sl(\overline{w'})=n(w')-a(w')\geq n(w)-a(w)=\chi(\Sigma).
\]
But Bennequin's inequality implies 
\[
sl(\overline{w'})=n(w')-a(w')\leq \chi(\Sigma)
\]
so we must have that  $a(w')= n(w')-\chi(K)$.
\end{proof}

We now turn to Orevkov's questions, specifically Questions~\ref{orevkov1} and~\ref{orevkov2}. 
\begin{cor}
Let $K$ be a fibered, strongly quasi-positive knot in $S^3$. Any two quasi-positive braids representing $K$ are related by positive Markov moves (and conjugation).
\end{cor}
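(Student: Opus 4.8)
The plan is to reduce the corollary directly to Theorem~\ref{uniquemax} and Theorem~\ref{posstab}, using the same circle of ideas that powered Corollary~\ref{cor1}. The key observation is that any quasi-positive braid representing a \emph{fibered, strongly quasi-positive} knot $K$ must be a maximal self-linking representative, and Theorem~\ref{uniquemax} pins down this maximal transverse representative uniquely. Once two braids are seen to represent the \emph{same} transverse knot, the transverse Markov theorem (Theorem~\ref{posstab}) immediately gives the desired conclusion.

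First I would fix the ``preferred'' strongly quasi-positive representative: by hypothesis $K$ is strongly quasi-positive, so there is a word $w$ in the generators $\sigma_{ij}$ with $\overline{w}$ isotopic to $K$, and its associated ribbon surface $\widetilde{\Sigma}_w$ is the fiber surface, so $sl(\overline{w})=-\chi(\widetilde{\Sigma}_w)=-\chi(K)$. Now let $w'$ be \emph{any} quasi-positive braid with $\overline{w'}$ topologically isotopic to $K$. The heart of the argument is to show that $\overline{w'}$ also realizes the maximal self-linking number, i.e. $sl(\overline{w'})=-\chi(K)$. This is exactly the computation carried out at the end of the proof of Corollary~\ref{cor1}: after stabilizing so that $w$ and $w'$ have equal braid index, the fact that every strongly quasi-positive generator is itself quasi-positive gives $a(w')\le a(w)$, hence $sl(\overline{w'})\ge -\chi(K)$, while Bennequin's inequality (Theorem~\ref{bennequin}) forces $sl(\overline{w'})\le -\chi(K)$. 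Thus both $\overline{w}$ and $\overline{w'}$ are transverse representatives of $K$ with self-linking number $-\chi(K)$.

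Next, Theorem~\ref{uniquemax} applies precisely because $K$ is fibered and strongly quasi-positive: it guarantees a \emph{unique} transverse knot in that topological type with $sl=-\chi(\Sigma)$. Therefore $\overline{w}$ and $\overline{w'}$ are transversely isotopic. The same reasoning applies to any two quasi-positive braids $w_1'$ and $w_2'$ representing $K$: each is transversely isotopic to the fixed representative $\overline{w}$, hence transversely isotopic to one another. Finally I would invoke Theorem~\ref{posstab}, which says that braids with transversely isotopic closures are related by conjugation (braid isotopy) and positive Markov stabilizations/destabilizations; this is exactly the statement to be proved.

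The only real subtlety, and the step I would be most careful with, is the braid-index bookkeeping: the transverse Markov moves alter the number of strands, so in comparing $a(w')$ to $a(w)$ one must first arrange equal braid index by positive stabilization, and one must check that positive stabilization preserves both quasi-positivity and strong quasi-positivity (the former is Theorem~\ref{qpandstab}, the latter is elementary from the generating set). Since positive stabilization also preserves transverse isotopy type and only adds a positive Markov move, this adjustment is harmless for the final conclusion. Everything else is a direct citation of the theorems already established, so the proof is genuinely short once Theorem~\ref{uniquemax} is in hand.
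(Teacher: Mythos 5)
Your proposal is correct and follows essentially the same route as the paper: the paper's proof simply cites Corollary~\ref{cor1} to conclude that both quasi-positive braids have closures with self-linking number $-\chi(K)$, then applies Theorem~\ref{uniquemax} to get a transverse isotopy and Theorem~\ref{posstab} to convert that into positive Markov moves and conjugation. You inline the self-linking computation from Corollary~\ref{cor1} rather than citing it, but the argument is the same (note only that the chain $a(w')\le a(w)$ yields $sl(\overline{w'})\le -\chi(K)$, the Bennequin direction; the reverse inequality is what comes from the ribbon surface realizing $g_4$ together with $g_4(K)=g_3(K)$ for strongly quasi-positive knots).
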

\begin{remark}
In particular, this says that two positive braids represent the same knot if and only if they are related by positive Markov moves (and conjugation). So all questions about knots represented by positive braids can be answered purely in the Positive Braid Monoids. 
\end{remark}

\begin{remark}
The answer to Orevkov's Question~\ref{orevkov1} is NO for general  strongly quasi-positive knots as the following example shows (thus being fibered is a crucial hypothesis). In \cite{BirmanMenasco06II} Birman and Menasco showed that the two braids 
\[
\sigma_1^{2p+1}\sigma_2^{2r}\sigma_1^{2q}\sigma_2^{-1}
\]
and
\[
\sigma_1^{2p+1}\sigma_2^{-1}\sigma_1^{2q}\sigma_2^{2r}
\]
give the same topological knots but different transverse knots if $p+1\not=q\not=r$ and $p,q,r>1$. Note that they are of course strongly quasi-positive but since they are not transversely isotopic any sequence of Markov stabilizations taking one to the other must contain negative stabilizations. 
\end{remark}
\begin{proof}
Let $w_1$ and $w_2$ be two quasi-positive braids representing the same strongly quasi-positive fibered knot $K$. By Corollary~\ref{cor1} we know that their closures $\overline{w_1}$ and $\overline{w_2}$ are both transverse knots with self-linking number equal to $-\chi(K)$. Thus by Theorem~\ref{uniquemax} we know that $\overline{w_1}$ and $\overline{w_2}$ are transversely isotopic and so Theorem~\ref{posstab} implies they are related by positive Markov moves (and conjugation).
\end{proof}
For Orevkov's second questions we have the following partial answer. 
\begin{cor}
Let $K$ be a fibered, strongly quasi-positive knot in $S^3$.  Then any minimal braid index representative of $K$ is quasi-positive. 
\end{cor}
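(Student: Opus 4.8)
The plan is to reduce the statement to a claim about self-linking numbers and then settle it with the Morton--Franks--Williams (MFW) inequality. First I would recall from Corollary~\ref{cor1} that a braid $b$ with $\overline{b}=K$ is quasi-positive exactly when $a(b)=n(b)-\chi(K)$, which by Lemma~\ref{braidsl} is the same as $sl(\overline{b})=a(b)-n(b)=-\chi(K)$. Since $K$ is strongly quasi-positive it admits a (strongly) quasi-positive representative realizing $sl=-\chi(K)$, while Bennequin's inequality (Theorem~\ref{bennequin}) forbids any transverse representative from exceeding this value; hence the maximal self-linking number of $K$ is exactly $\overline{sl}(K)=-\chi(K)$. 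Thus the corollary is equivalent to the assertion that \emph{every minimal braid index representative of $K$ attains the maximal self-linking number.}

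To prove this last assertion I would invoke Morton's inequalities for a braid $b$ on $n(b)$ strands with exponent sum $a(b)$. Writing $d_{\min}$ and $d_{\max}$ for the lowest and highest powers of $v$ in the HOMFLY polynomial $P_K$, these read
\[
a(b)-n(b)+1 \le d_{\min} \quad\text{and}\quad d_{\max} \le a(b)+n(b)-1,
\]
whose difference gives $d_{\max}-d_{\min} \le 2n(b)-2$. If $b$ is a minimal braid index representative then $n(b)$ is the braid index of $K$, so the MFW bound $\tfrac12(d_{\max}-d_{\min})+1 \le n(b)$ holds. Once this bound is sharp the displayed inequality is forced to be an equality, which in turn forces both Morton inequalities to be equalities; in particular $d_{\min}=a(b)-n(b)+1=sl(\overline{b})+1$. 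Combining this with the always-valid HOMFLY self-linking bound $\overline{sl}(K)\le d_{\min}-1$ produces the chain
\[
sl(\overline{b}) \le \overline{sl}(K) = -\chi(K) \le d_{\min}-1 = sl(\overline{b}),
\]
so every relation is an equality and $sl(\overline{b})=-\chi(K)$. By Corollary~\ref{cor1} such a $b$ is quasi-positive, which finishes the argument. (Alternatively, having shown $sl(\overline{b})=-\chi(K)$, one concludes that $\overline{b}$ is transversely isotopic to the unique maximal representative of Theorem~\ref{uniquemax} and then transports quasi-positivity across the positive Markov moves supplied by Theorems~\ref{posstab} and~\ref{qpandstab}.)

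The hard part is justifying that the Morton--Franks--Williams inequality is sharp for $K$: this is the only nontrivial external input, since the Morton inequalities and the HOMFLY self-linking bound hold for all links. Sharpness is classical for positive braid closures and torus knots, but it fails for general knots, so the crux is to show that fibered strongly quasi-positive knots lie in the sharp range. I would try to extract sharpness from the structure already in hand: the minimal-genus Bennequin surface built from the strongly quasi-positive braid pins down both $\chi(K)$ and $\overline{sl}(K)$, and I expect the fiberedness hypothesis, through the uniqueness in Theorem~\ref{uniquemax}, to be precisely what excludes a minimal braid index representative of strictly smaller writhe (equivalently, smaller self-linking). Making this exclusion precise is where the real work lies, and it is the step I anticipate being the main obstacle.
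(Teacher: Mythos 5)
Your reduction is sound and matches the paper's endgame: once you know that a minimal braid index representative $b$ of $K$ satisfies $sl(\overline{b})=-\chi(K)$, Corollary~\ref{cor1} (equivalently, Theorem~\ref{uniquemax} followed by Theorems~\ref{posstab} and~\ref{qpandstab}) finishes the proof exactly as you describe. The problem is the step you yourself flag as the main obstacle, and it is a genuine gap rather than a technicality: your argument needs the Morton--Franks--Williams inequality to be \emph{sharp} for $K$, and sharpness of MFW is not known for fibered strongly quasi-positive knots --- indeed MFW fails to be sharp for infinitely many knots (Kawamuro exhibited such families), and nothing in the data you have in hand (the Bennequin surface, fiberedness, or the uniqueness statement of Theorem~\ref{uniquemax}) produces it. Theorem~\ref{uniquemax} is a statement about transverse representatives that \emph{already} realize $sl=-\chi(K)$; it cannot by itself exclude a minimal braid index representative of smaller writhe, which is precisely what you would need.

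The paper closes this gap with a different external input: the Dynnikov--Prasolov resolution of the generalized Jones conjecture (the ``Kawamura braid geography'' statement), which says that for any braid $w$ with $\overline{w}=K$ one has $b+|a(w)-l|\le n(w)$ for the braid index $b$ and a fixed constant $l$. A strongly quasi-positive representative realizes $a(w)-n(w)=-\chi(K)$, which by Bennequin is the maximal possible value of $a-n$, so it sits on the right-hand edge of the cone and forces $l-b=-\chi(K)$; a minimal braid index representative $w'$ has $n(w')=b$, hence $a(w')=l$, hence $sl(\overline{w'})=l-b=-\chi(K)$. This is strictly weaker input than MFW sharpness (sharpness of MFW implies the Jones conjecture for that knot, not conversely), which is exactly why the paper's route goes through. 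If you replace your MFW-sharpness step with the Dynnikov--Prasolov inequality, the rest of your write-up stands unchanged.
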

\begin{proof}
Dynnikov and Prasolov \cite{DynnikovPrasolov13} (see also LaFountain and Menasco \cite{MenascoLaFountain13?} for an alternate approach) proved the ``Kawamura Braid Geography Conjecture" (or also known as the ``generalized Jones conjecture"). Specifically they showed that given a knot $K$ if $b$ is the minimal braid index of braids representing $K$ then there is some constant $l$ such that for any braid $w$ representing $K$ we have
\[
b + |a(w)-l|\leq n(w).
\]
Graphically this is shown in Figure~\ref{bgc} where we plot all the values of $(a(w),n(w))$ for braids $w$ representing $K$. 
\begin{figure}[htb]
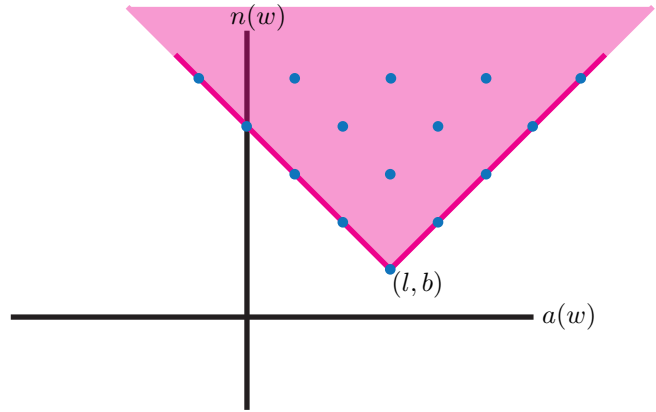

\begin{overpic}
{fig/bgc}
\put(144, 45){$(l,b)$}
\put(201,33){$a(w)$}
\put(83, 146){$n(w)$}
\end{overpic}
\caption{The shaded region is the ``Braid Geography Cone" that contains the ordered pairs $(a(w),b(w))$ for all braids $w$ whose closures give the link $K$.}
\label{bgc}
\end{figure}

Now given a fibered, strongly quasi-positive knot $K$ we know from the Bennequin bound that for any braid $w$ representing $K$ we have
\[
a(w)-n(w)=sl(\overline{w})\leq -\chi(K).
\]
We know there is a strongly quasi-positive braid $w$ such that $K=\overline{w}$ and that $sl(\overline{w})= a(w)-n(w)=-\chi(K)$. So $(a(w),b(w))$ is on the right hand edge of ``Braid Geography Cone", see Figure~\ref{bgc}. Now if $w'$ is a minimal braid index braid representing $K$ then $(a(w'), n(w'))$ is at the vertex of the Cone and hence we see that $sl(\overline{w'})=a(w')-n(w')=-\chi(K)$. Thus Theorem~\ref{uniquemax} implies that $\overline{w}$ and $\overline{w'}$ are transversely isotopic and hence they are related by positive Markov moves (and conjugation) by Theorem~\ref{posstab}. Now of course Theorem~\ref{qpandstab} implies that $w'$ is quasi-positive. 
\end{proof}
\begin{remark}
It is still not known if the hypothesis of fibered is necessary in this result. In addition it is not known if strongly quasi-positive can be replaced with quasi-positive. 
\end{remark}

\bibliography{references}

\def\cprime{$'$} \def\cprime{$'$}
\begin{thebibliography}{10}

\bibitem{AkbulutMatveyev97}
Selman Akbulut and Rostislav Matveyev.
\newblock Exotic structures and adjunction inequality.
\newblock {\em Turkish J. Math.}, 21(1):47--53, 1997.

\bibitem{Alexander23}
J.~Alexander.
\newblock A lemma on systems of knotted curves.
\newblock {\em Proc. Nat. Acad. Sci. USA}, 9:93--95, 1923.

\bibitem{Artin25}
Emil Artin.
\newblock Theorie der {Z}\"opfe.
\newblock {\em Abh. Math. Sem. Univ. Hamburg}, 4(1):47--72, 1925.

\bibitem{BakerEtnyreVanHorn-Morris12}
Kenneth~L. Baker, John~B. Etnyre, and Jeremy Van Horn-Morris.
\newblock Cabling, contact structures and mapping class monoids.
\newblock {\em J. Differential Geom.}, 90(1):1--80, 2012.

\bibitem{Baldwin07}
John~A. Baldwin.
\newblock Tight contact structures and genus one fibered knots.
\newblock {\em Algebr. Geom. Topol.}, 7:701--735, 2007.

\bibitem{Baldwin08}
John~A. Baldwin.
\newblock Comultiplicativity of the {O}zsv\'ath-{S}zab\'o contact invariant.
\newblock {\em Math. Res. Lett.}, 15(2):273--287, 2008.

\bibitem{Baldwin10}
John~A. Baldwin.
\newblock Comultiplication in link {F}loer homology and transversely nonsimple
  links.
\newblock {\em Algebr. Geom. Topol.}, 10(3):1417--1436, 2010.

\bibitem{Baldwin11}
John~A. Baldwin.
\newblock On the spectral sequence from {K}hovanov homology to {H}eegaard
  {F}loer homology.
\newblock {\em Int. Math. Res. Not. IMRN}, (15):3426--3470, 2011.

\bibitem{Baldwin12}
John~A. Baldwin.
\newblock Contact monoids and {S}tein cobordisms.
\newblock {\em Math. Res. Lett.}, 19(1):31--40, 2012.

\bibitem{Baldwin13}
John~A. Baldwin.
\newblock Capping off open books and the {O}zsv\'ath-{S}zab\'o contact
  invariant.
\newblock {\em J. Symplectic Geom.}, 11(4):525--561, 2013.

\bibitem{BaldwinPlamenevskaya10}
John~A. Baldwin and Olga Plamenevskaya.
\newblock Khovanov homology, open books, and tight contact structures.
\newblock {\em Adv. Math.}, 224(6):2544--2582, 2010.

\bibitem{Bennequin83}
Daniel Bennequin.
\newblock Entrelacements et \'equations de {P}faff.
\newblock In {\em Third Schnepfenried geometry conference, Vol. 1
  (Schnepfenried, 1982)}, volume 107 of {\em Ast\'erisque}, pages 87--161. Soc.
  Math. France, Paris, 1983.

\bibitem{BhupalOzbagci11}
Mohan Bhupal and Burak Ozbagci.
\newblock Milnor open books of links of some rational surface singularities.
\newblock {\em Pacific J. Math.}, 254(1):47--65, 2011.

\bibitem{BIrman74}
Joan~S. Birman.
\newblock {\em Braids, links, and mapping class groups}.
\newblock Princeton University Press, Princeton, N.J.; University of Tokyo
  Press, Tokyo, 1974.
\newblock Annals of Mathematics Studies, No. 82.

\bibitem{BirmanBrendle05}
Joan~S. Birman and Tara~E. Brendle.
\newblock Braids: a survey.
\newblock In {\em Handbook of knot theory}, pages 19--103. Elsevier B. V.,
  Amsterdam, 2005.

\bibitem{BirmanMenasco06II}
Joan~S. Birman and William~W. Menasco.
\newblock Stabilization in the braid groups. {II}. {T}ransversal simplicity of
  knots.
\newblock {\em Geom. Topol.}, 10:1425--1452 (electronic), 2006.

\bibitem{BoileauOrevkov01}
Michel Boileau and Stepan Orevkov.
\newblock Quasi-positivit\'e d'une courbe analytique dans une boule
  pseudo-convexe.
\newblock {\em C. R. Acad. Sci. Paris S\'er. I Math.}, 332(9):825--830, 2001.

\bibitem{ColinHonda13}
Vincent Colin and Ko~Honda.
\newblock Reeb vector fields and open book decompositions.
\newblock {\em J. Eur. Math. Soc. (JEMS)}, 15(2):443--507, 2013.

\bibitem{Dehn87}
Max Dehn.
\newblock {\em Papers on group theory and topology}.
\newblock Springer-Verlag, New York, 1987.
\newblock Translated from the German and with introductions and an appendix by
  John Stillwell, With an appendix by Otto Schreier.

\bibitem{DynnikovPrasolov13}
I.~A. Dynnikov and M.~V. Prasolov.
\newblock Bypasses for rectangular diagrams. {A} proof of the {J}ones
  conjecture and related questions.
\newblock {\em Trans. Moscow Math. Soc.}, pages 97--144, 2013.

\bibitem{Eliashberg90b}
Yakov Eliashberg.
\newblock Filling by holomorphic discs and its applications.
\newblock In {\em Geometry of low-dimensional manifolds, 2 (Durham, 1989)},
  volume 151 of {\em London Math. Soc. Lecture Note Ser.}, pages 45--67.
  Cambridge Univ. Press, Cambridge, 1990.

\bibitem{Eliashberg90a}
Yakov Eliashberg.
\newblock Topological characterization of {S}tein manifolds of dimension
  {$>2$}.
\newblock {\em Internat. J. Math.}, 1(1):29--46, 1990.

\bibitem{Eliashberg92a}
Yakov Eliashberg.
\newblock Contact {$3$}-manifolds twenty years since {J}. {M}artinet's work.
\newblock {\em Ann. Inst. Fourier (Grenoble)}, 42(1-2):165--192, 1992.

\bibitem{Eliashberg96}
Yasha Eliashberg.
\newblock Unique holomorphically fillable contact structure on the {$3$}-torus.
\newblock {\em Internat. Math. Res. Notices}, (2):77--82, 1996.

\bibitem{EtguOzbagci06}
Tolga Etg{\"u} and Burak Ozbagci.
\newblock Explicit horizontal open books on some plumbings.
\newblock {\em Internat. J. Math.}, 17(9):1013--1031, 2006.

\bibitem{Etnyre98}
John~B. Etnyre.
\newblock Symplectic convexity in low-dimensional topology.
\newblock {\em Topology Appl.}, 88(1-2):3--25, 1998.
\newblock Symplectic, contact and low-dimensional topology (Athens, GA, 1996).

\bibitem{Etnyre03}
John~B. Etnyre.
\newblock Introductory lectures on contact geometry.
\newblock In {\em Topology and geometry of manifolds (Athens, GA, 2001)},
  volume~71 of {\em Proc. Sympos. Pure Math.}, pages 81--107. Amer. Math. Soc.,
  Providence, RI, 2003.

\bibitem{Etnyre05}
John~B. Etnyre.
\newblock Legendrian and transversal knots.
\newblock In {\em Handbook of knot theory}, pages 105--185. Elsevier B. V.,
  Amsterdam, 2005.

\bibitem{EtnyreHonda02b}
John~B. Etnyre and Ko~Honda.
\newblock Tight contact structures with no symplectic fillings.
\newblock {\em Invent. Math.}, 148(3):609--626, 2002.

\bibitem{EtnyreVanHornMorris10}
John~B. Etnyre and Jeremy {Van Horn-Morris}.
\newblock Fibered transverse knots and the {B}ennequin bound.
\newblock {\em Int. Math. Res. Not.}, page~27, 2010.
\newblock arXiv:0803.0758v2.

\bibitem{FarbMargalit12}
Benson Farb and Dan Margalit.
\newblock {\em A primer on mapping class groups}, volume~49 of {\em Princeton
  Mathematical Series}.
\newblock Princeton University Press, Princeton, NJ, 2012.

\bibitem{FennGreeneRolfsenRourkeWiest99}
R.~Fenn, M.~T. Greene, D.~Rolfsen, C.~Rourke, and B.~Wiest.
\newblock Ordering the braid groups.
\newblock {\em Pacific J. Math.}, 191(1):49--74, 1999.

\bibitem{Gabai86}
David Gabai.
\newblock Detecting fibred links in {$S^3$}.
\newblock {\em Comment. Math. Helv.}, 61(4):519--555, 1986.

\bibitem{GabaiOertel89}
David Gabai and Ulrich Oertel.
\newblock Essential laminations in {$3$}-manifolds.
\newblock {\em Ann. of Math. (2)}, 130(1):41--73, 1989.

\bibitem{Geiges08}
Hansj{\"o}rg Geiges.
\newblock {\em An introduction to contact topology}, volume 109 of {\em
  Cambridge Studies in Advanced Mathematics}.
\newblock Cambridge University Press, Cambridge, 2008.

\bibitem{Ghiggini05}
Paolo Ghiggini.
\newblock Strongly fillable contact 3-manifolds without {S}tein fillings.
\newblock {\em Geom. Topol.}, 9:1677--1687 (electronic), 2005.

\bibitem{Ghiggini06}
Paolo Ghiggini.
\newblock Infinitely many universally tight contact manifolds with trivial
  {O}zsv\'ath-{S}zab\'o contact invariants.
\newblock {\em Geom. Topol.}, 10:335--357 (electronic), 2006.

\bibitem{Ghiggini06b}
Paolo Ghiggini.
\newblock Ozsv\'ath-{S}zab\'o invariants and fillability of contact structures.
\newblock {\em Math. Z.}, 253(1):159--175, 2006.

\bibitem{Giroux02}
Emmanuel Giroux.
\newblock G\'eom\'etrie de contact: de la dimension trois vers les dimensions
  sup\'erieures.
\newblock In {\em Proceedings of the International Congress of Mathematicians,
  Vol. II (Beijing, 2002)}, pages 405--414, Beijing, 2002. Higher Ed. Press.

\bibitem{Gompf98}
Robert~E. Gompf.
\newblock Handlebody construction of {S}tein surfaces.
\newblock {\em Ann. of Math. (2)}, 148(2):619--693, 1998.

\bibitem{Gromov85}
M.~Gromov.
\newblock Pseudoholomorphic curves in symplectic manifolds.
\newblock {\em Invent. Math.}, 82(2):307--347, 1985.

\bibitem{HarveyKawamuroPlamenevskaya09}
Shelly Harvey, Keiko Kawamuro, and Olga Plamenevskaya.
\newblock On transverse knots and branched covers.
\newblock {\em Int. Math. Res. Not. IMRN}, (3):512--546, 2009.

\bibitem{HeddenOrding08}
Matthew Hedden and Philip Ording.
\newblock The {O}zsv\'ath-{S}zab\'o and {R}asmussen concordance invariants are
  not equal.
\newblock {\em Amer. J. Math.}, 130(2):441--453, 2008.

\bibitem{HondaKazezMatic07}
Ko~Honda, William~H. Kazez, and Gordana Mati{\'c}.
\newblock Right-veering diffeomorphisms of compact surfaces with boundary.
\newblock {\em Invent. Math.}, 169(2):427--449, 2007.

\bibitem{HondaKazezMatic08}
Ko~Honda, William~H. Kazez, and Gordana Mati{\'c}.
\newblock Right-veering diffeomorphisms of compact surfaces with boundary.
  {II}.
\newblock {\em Geom. Topol.}, 12(4):2057--2094, 2008.

\bibitem{HondaKazezMatic09a}
Ko~Honda, William~H. Kazez, and Gordana Mati{\'c}.
\newblock On the contact class in {H}eegaard {F}loer homology.
\newblock {\em J. Differential Geom.}, 83(2):289--311, 2009.

\bibitem{ItoKawamuro12}
T.~{Ito} and K.~{Kawamuro}.
\newblock {Essential open book foliation and fractional Dehn twist
  coefficient}.
\newblock {\em ArXiv e-prints}, August 2012.

\bibitem{ItoKawamuro14}
T.~{Ito} and K.~{Kawamuro}.
\newblock {Overtwisted discs in planar open books}.
\newblock {\em ArXiv e-prints}, January 2014.

\bibitem{MenascoLaFountain13?}
D.~J. {LaFountain} and W.~W. {Menasco}.
\newblock {Embedded annuli and Jones' conjecture}.
\newblock {\em ArXiv e-prints}, February 2013.

\bibitem{LekiliOzbagci10}
Yanki Lekili and Burak Ozbagci.
\newblock Milnor fillable contact structures are universally tight.
\newblock {\em Math. Res. Lett.}, 17(6):1055--1063, 2010.

\bibitem{Lickorish64}
W.~B.~R. Lickorish.
\newblock A finite set of generators for the homeotopy group of a
  {$2$}-manifold.
\newblock {\em Proc. Cambridge Philos. Soc.}, 60:769--778, 1964.

\bibitem{LiscaMatic96}
P.~Lisca and G.~Mati{\'c}.
\newblock Stein {$4$}-manifolds with boundary and contact structures.
\newblock {\em Topology Appl.}, 88(1-2):55--66, 1998.
\newblock Symplectic, contact and low-dimensional topology (Athens, GA, 1996).

\bibitem{Markov35}
A.~A. Markov.
\newblock {\"U}ber die freie \"{A}quivalenz der geschlossenen {Z}\"opfe.
\newblock {\em Recueil Math. Moscou}, 1:73--78, 1935.

\bibitem{Massot08}
Patrick Massot.
\newblock Geodesible contact structures on 3-manifolds.
\newblock {\em Geom. Topol.}, 12(3):1729--1776, 2008.

\bibitem{NiederkrugerWendl11}
Klaus Niederkr{\"u}ger and Chris Wendl.
\newblock Weak symplectic fillings and holomorphic curves.
\newblock {\em Ann. Sci. \'Ec. Norm. Sup\'er. (4)}, 44(5):801--853, 2011.

\bibitem{Orevkov99}
S.~Yu. Orevkov.
\newblock Link theory and oval arrangements of real algebraic curves.
\newblock {\em Topology}, 38(4):779--810, 1999.

\bibitem{OrevkovShevchishin03}
S.~Yu. Orevkov and V.~V. Shevchishin.
\newblock Markov theorem for transversal links.
\newblock {\em J. Knot Theory Ramifications}, 12(7):905--913, 2003.

\bibitem{Orevkov00}
Stepan~Yu. Orevkov.
\newblock Markov moves for quasipositive braids.
\newblock {\em C. R. Acad. Sci. Paris S\'er. I Math.}, 331(7):557--562, 2000.

\bibitem{OzsvathSzabo03}
Peter Ozsv{\'a}th and Zolt{\'a}n Szab{\'o}.
\newblock Knot {F}loer homology and the four-ball genus.
\newblock {\em Geom. Topol.}, 7:615--639 (electronic), 2003.

\bibitem{OzsvathSzabo04a}
Peter Ozsv{\'a}th and Zolt{\'a}n Szab{\'o}.
\newblock Holomorphic disks and genus bounds.
\newblock {\em Geom. Topol.}, 8:311--334 (electronic), 2004.

\bibitem{OzsvathSzabo05a}
Peter Ozsv{\'a}th and Zolt{\'a}n Szab{\'o}.
\newblock Heegaard {F}loer homology and contact structures.
\newblock {\em Duke Math. J.}, 129(1):39--61, 2005.

\bibitem{OzsvathSzaboThurston08}
Peter Ozsv{\'a}th, Zolt{\'a}n Szab{\'o}, and Dylan Thurston.
\newblock Legendrian knots, transverse knots and combinatorial {F}loer
  homology.
\newblock {\em Geom. Topol.}, 12(2):941--980, 2008.

\bibitem{Pardon12}
John Pardon.
\newblock The link concordance invariant from {L}ee homology.
\newblock {\em Algebr. Geom. Topol.}, 12(2):1081--1098, 2012.

\bibitem{Pavelescu08}
Elena Pavelescu.
\newblock {\em Braids and open book decompositions}.
\newblock PhD thesis, University of Pennsylvania, 2008.

\bibitem{Plamenevskaya06}
Olga Plamenevskaya.
\newblock Transverse knots and {K}hovanov homology.
\newblock {\em Math. Res. Lett.}, 13(4):571--586, 2006.

\bibitem{Rasmussen03}
Jacob Rasmussen.
\newblock {\em Floer homology and knot complements}.
\newblock PhD thesis, Harvard University, 2003.

\bibitem{Rasmussen10}
Jacob Rasmussen.
\newblock Khovanov homology and the slice genus.
\newblock {\em Invent. Math.}, 182(2):419--447, 2010.

\bibitem{Roberts01}
Rachel Roberts.
\newblock Taut foliations in punctured surface bundles. {I}.
\newblock {\em Proc. London Math. Soc. (3)}, 82(3):747--768, 2001.

\bibitem{Roberts01b}
Rachel Roberts.
\newblock Taut foliations in punctured surface bundles. {II}.
\newblock {\em Proc. London Math. Soc. (3)}, 83(2):443--471, 2001.

\bibitem{Rudolph83}
Lee Rudolph.
\newblock Algebraic functions and closed braids.
\newblock {\em Topology}, 22(2):191--202, 1983.

\bibitem{Rudolph95}
Lee Rudolph.
\newblock An obstruction to sliceness via contact geometry and ``classical''
  gauge theory.
\newblock {\em Invent. Math.}, 119(1):155--163, 1995.

\bibitem{Skora92}
Richard~K. Skora.
\newblock Closed braids in {$3$}-manifolds.
\newblock {\em Math. Z.}, 211(2):173--187, 1992.

\bibitem{Stallings78}
John~R. Stallings.
\newblock Constructions of fibred knots and links.
\newblock In {\em Algebraic and geometric topology (Proc. Sympos. Pure Math.,
  Stanford Univ., Stanford, Calif., 1976), Part 2}, Proc. Sympos. Pure Math.,
  XXXII, pages 55--60. Amer. Math. Soc., Providence, R.I., 1978.

\bibitem{Sundheim93}
Paul~A. Sundheim.
\newblock The {A}lexander and {M}arkov theorems via diagrams for links in
  {$3$}-manifolds.
\newblock {\em Trans. Amer. Math. Soc.}, 337(2):591--607, 1993.

\bibitem{ThurstonWinkelnkemper75}
W.~P. Thurston and H.~E. Winkelnkemper.
\newblock On the existence of contact forms.
\newblock {\em Proc. Amer. Math. Soc.}, 52:345--347, 1975.

\bibitem{TosunPre}
B{\"u}lent Tosun.
\newblock Tight small seifert fibered spaces with $e_0=-2$.
\newblock Preprint 2014.

\bibitem{Vertesi08}
Vera V{\'e}rtesi.
\newblock Transversely nonsimple knots.
\newblock {\em Algebr. Geom. Topol.}, 8(3):1481--1498, 2008.

\bibitem{Wand14?}
A.~{Wand}.
\newblock {Tightness is preserved by Legendrian surgery}.
\newblock {\em ArXiv e-prints}, April 2014.

\bibitem{Wand??}
Andy Wand.
\newblock Factorizations of diffeomorphisms of compact surfaces with boundary.
\newblock 2009.

\bibitem{Weinstein91}
Alan Weinstein.
\newblock Contact surgery and symplectic handlebodies.
\newblock {\em Hokkaido Math. J.}, 20(2):241--251, 1991.

\bibitem{Wendl10}
Chris Wendl.
\newblock Strongly fillable contact manifolds and {$J$}-holomorphic foliations.
\newblock {\em Duke Math. J.}, 151(3):337--384, 2010.

\bibitem{Wrinkle02}
Nancy~C. Wrinkle.
\newblock The markov theorem for transverse knots.
\newblock Preprint 2002, arXiv:math/0202055v1.

\end{thebibliography}
\bibliographystyle{plain}

\end{document}